\newcommand{\R}{\mathbb{R}}
\DeclareMathOperator{\isom}{Isom}
\newcommand{\void}{\textnormal{\O}}
\spnewtheorem*{AAT}{Abel's Addition Theorem}{\bfseries}{\itshape}
\patchcmd{\qed}{\ifmmode\qedsymbol}{\ifmmode\the\qedsymbol}{}{\foobar}
\patchcmd{\qed}{\hfil\qedsymbol}{\hfil\the\qedsymbol}{}{\foobar}
\newcommand{\qedhere}{\tag*{\the\qedsymbol}}
\def\C{\mathbb C} 
\def\D{\mathbb D} 
\def\N{\mathbb N}
\begin{document}

\title*{Dennis Sullivan's Work on Dynamics}

%
%
\author{Edson de Faria and Sebastian van Strien}
\institute{E. de Faria \at Instituto de Matematica e Estatistica, Universidade de Sao Paulo, \email{edson@ime.usp.br} \and S. van Strien \at Department of Mathematics, Imperial College London, \email{s.van-strien@imperial.ac.uk}}

\maketitle


Before going into Dennis Sullivan's work on dynamics, we would like to share some of our reminiscences on the  remarkable 
way in which he influenced a huge number of mathematicians, including the two of us.  Both while at IHES
and at CUNY, Dennis had an office which came with an anteroom.  
Our impression is that he would spend most of his time in this anteroom,  
 talking about mathematics with whoever he had invited or whoever was around. 
 Quite often while listening to somebody, he would end up giving a new spin or a 
 new interpretation  to what they had been saying. Similarly, he would 
 explain what he was working on, trying out new ideas, and also often explaining 
 results of others.  Spending time with him was always an incredible experience. 

In this spirit, Dennis explained much of his work on renormalisation to Welington de Melo. 
In turn, Welington would then try to explain what he had heard and learned to SvS. 
When he could not convince SvS of some argument, Welington would 
go back to Dennis and this process would repeat again, sometimes many times. This is how 
the final chapter of the book {\em One dimensional dynamics} of SvS with Welington de Melo
  came into being, see \cite{MR1239171}.  This chapter contains a full exposition of Dennis' remarkable 
renormalisation theory, arguably the only place in which it  was published with full details. 
 
At the Graduate Center of CUNY, Dennis coordinated the \emph{Einstein Chair Seminar}, more informally known as the Sullivan seminar, bringing in speakers from all over the world. The seminar ran once a week with talks in the afternoon, but the invited speakers usually came in the morning, and intense mathematical discussions ensued through lunch, and oftentimes even after the speaker's talk, following a short break for tea.  
During the talks, Dennis often asked questions not necessarily  to 
know the answer himself,  but because he knew that somebody else in the audience 
would find the answer helpful. In this way, Dennis  took  on the role
of introducing two people to each other. His presence in the audience would usually  
make a talk much more accessible and interesting. His questions  would often 
clarify connections that would have remained implicit otherwise. 

When Dennis invented or learned about a new mathematical idea, he would push 
this idea to the limit. For him it was very important to understand what this idea
would give, and equally important to find out what the limitations of this idea might be. 
Moreover, whenever possible, he liked to associate names to arguments
 such as the  {\em dollar argument}, {\em smallest interval argument} or 
the {\em non-coiling argument} in order to synthesise a complex proof
into its core ideas. 

Often he mentioned that to understand a proof properly, you should treat it like a three dimensional object. 
You should not only look at it from one side, but from all sides. So in this sense, in his view, 
a proof was about mathematical understanding rather than about {\lq}killing{\rq} a theorem. 

Indeed,  Dennis' interest in a result might not necessarily be in the power of the statement per se, but 
in the tools that are used in the proof of this result.  Once he understands the tools and ideas, then he probably can 
recover the statement of the results by himself.

\medskip 

So let us turn to the  field of dynamical systems. 
Dennis Sullivan always had a keen interest in the field of dynamical systems, 
and already in the 1970's published several high impact papers 
in this area, many of them remarkably short.  Let us first highlight a few of his papers
on smooth dynamical systems, and then his  groundbreaking papers on  Kleinian groups and holomorphic dynamics.

\section{Smooth dynamics} 

Although his early papers on dynamics treat a diverse range of problems, they  all have an overarching theme: 
what smoothness (or other) structures are compatible with a particular dynamical setting.

\subsection{From topology to dynamics} 

In the mid-seventies, Dennis started to become more and more interested in dynamics, transitioning from the pure study of structures on manifolds to the study of dynamical objects such as flows and, more generally, foliations on manifolds. 
One of his first striking results in this direction is the following.

\begin{theorem}[Counterexample to the periodic orbit conjecture, see 
\cite{MR402824}, \cite{MR501022}] 
There exists a flow on a compact five-dimensional manifold all of whose orbits are periodic, and yet the lenghts of such orbits are not bounded.  
\end{theorem}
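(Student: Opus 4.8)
The plan is to reduce the theorem to a statement about foliations and then to build the relevant foliation near a single exceptional leaf. First, it suffices to produce a $C^{\infty}$ one-dimensional foliation $\mathcal{F}$ of a \emph{closed} $5$-manifold $M$, all of whose leaves are circles, whose leaf-length function (in some, hence --- on a compact manifold --- in any, Riemannian metric) is unbounded. Given such an $\mathcal{F}$, I orient its tangent line field, passing if necessary to the double cover of $M$ on which the line field becomes orientable (again a closed $5$-manifold, with each leaf length multiplied by $1$ or $2$, so still unbounded), fix a metric, and take $X$ to be the unit-speed vector field tangent to $\mathcal{F}$: then $X$ vanishes nowhere, its flow has all orbits periodic (namely the leaves), and the period of each orbit equals its length. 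Moreover it is enough to build $\mathcal{F}$ locally: I only need a compact $5$-manifold-with-boundary $B$, foliated by circles, agreeing near $\partial B\cong S^{3}\times S^{1}$ with the trivial product foliation by the $S^{1}$-factor, and containing leaves of arbitrarily large length; such a $B$ can be spliced into any closed $5$-manifold in place of a trivially foliated saturated tube $S^{1}\times D^{4}$ --- which exists around any leaf of trivial holonomy, for instance in the product foliation of $S^{1}\times S^{4}$ --- and the result then has unbounded leaf lengths.

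So everything comes down to constructing $B$, which I would model on a tubular neighbourhood of one exceptional leaf $\gamma_{0}\cong S^{1}$ with trivial normal bundle, so $B\cong S^{1}\times D^{4}$ with $\gamma_{0}=S^{1}\times\{0\}$ and $D^{4}$ the unit disk in $\mathbb{C}^{2}$. The first thing to observe is a structural constraint. If $\gamma_{0}$ had \emph{finite} holonomy --- that is, if the holonomy group of $\gamma_{0}$, a group of germs at $0$ of diffeomorphisms of $\mathbb{R}^{4}$, were finite of order $m$ --- then every nearby leaf would have length at most $m\,\ell(\gamma_{0})$ and the lengths would be bounded near $\gamma_{0}$. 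Hence the holonomy of $\gamma_{0}$ must be an \emph{infinite} group of germs of diffeomorphisms of $(\mathbb{R}^{4},0)$ all of whose orbits are finite, with orbit sizes unbounded in every neighbourhood of $0$. Such groups do exist; but they exist only because the transverse dimension is $4$. There is nothing analogous in dimension $\le 2$, and --- decisively --- Epstein's theorem forbids this phenomenon altogether for flows on $3$-manifolds, which is precisely why a counterexample must be (and Sullivan's is) five-dimensional.

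To get one's hands on $B$ I would work in polar coordinates: write $D^{4}\setminus\{0\}=(0,1]\times S^{3}$, so $\partial B=S^{1}\times\{1\}\times S^{3}$, and try to assemble $\mathcal{F}$ shell by shell out of \emph{weighted Hopf flows}. Fixing a sequence $r_{n}\to 0$ and coprime integers $p_{n},q_{n}$ with $r_{n}\max(p_{n},q_{n})\to\infty$, I would arrange that on the slice $S^{1}\times\{r_{n}\}\times S^{3}$ the foliation is the orbit foliation of $(z_{1},z_{2})\mapsto(e^{ip_{n}t}z_{1},e^{iq_{n}t}z_{2})$, with the $S^{1}=\gamma_{0}$ direction held fixed: every such orbit is a closed circle, and the generic orbit at radius $r_{n}$ has length of order $r_{n}\max(p_{n},q_{n})$, which diverges --- this is the source of the unbounded lengths accumulating on $\gamma_{0}$. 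Near $r=1$ the foliation should instead be the trivial foliation with leaves $S^{1}\times\{(r,\omega)\}$, so as to match the collar, and near $r=0$ again such a trivial foliation, with $\gamma_{0}=S^{1}\times\{0\}$ a short closed leaf. The remaining task is to fill in $\mathcal{F}$ on the transition shells $r\in(r_{n+1},r_{n})$ and on $(r_{1},1)$, interpolating between these prescriptions.

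This interpolation is the whole difficulty, and the step I expect to absorb essentially all of the work. One cannot simply let the Hopf weights become $r$-dependent functions $(p(r),q(r))$: being integer-valued and continuous they would be constant, and the lengths would stay bounded --- this is the same obstruction that makes Epstein's theorem true in low dimensions. The way out is that the shells must be glued by nontrivial twists of the transverse $\mathbb{R}^{4}$ (loops in $SO(4)$ along $\gamma_{0}$), so that the Hopf weights are re-indexed from one shell to the next; correspondingly, the leaves in a transition shell are no longer contained in the spheres $S^{1}\times\{r\}\times S^{3}$ but spiral through a range of radii while winding around $\gamma_{0}$. The hard part is to choose these twists and the interpolating vector field so that \emph{every} orbit --- at every radius, not merely at the $r_{n}$ --- is a closed circle, and so that the whole field is $C^{\infty}$ and nonvanishing up to and including $\gamma_{0}$; this amounts to a delicate return-map analysis across the transition shells. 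Once it is carried out, the unboundedness of the leaf lengths is immediate from the slices $r=r_{n}$, and the reduction in the first paragraph converts $\mathcal{F}$ into the required flow.
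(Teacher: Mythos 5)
Your reductions (orient the line field on a double cover and take the unit-speed field, so periods equal lengths; splice a suitably foliated tube $B\cong S^1\times D^4$, trivial near $\partial B$, into the product foliation of $S^1\times S^4$) are sound, but they only repackage the problem: the entire mathematical content of the theorem is the construction of $B$, and you explicitly defer exactly that step (``this interpolation is the whole difficulty\dots a delicate return-map analysis''), giving no argument that the transition shells can be filled in with \emph{all} orbits closed and the field smooth and nonvanishing at $\gamma_0$. Worse, the sketch you do give is internally inconsistent: you ask for the trivial product foliation on a neighbourhood of $r=0$ (which would give $\gamma_0$ trivial holonomy and \emph{bounded} leaf lengths near it), while simultaneously prescribing weighted Hopf foliations with lengths of order $r_n\max(p_n,q_n)\to\infty$ on slices $r=r_n\to 0$ accumulating on $\gamma_0$ --- and this clashes with your own (correct) observation that the core leaf would have to carry infinite holonomy. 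Nor is it clear that your localized model is achievable at all: that the ``bad set'' where the length function blows up can be a single closed orbit, with the foliation trivialized near the boundary of a saturated tube, is itself a strong structural claim (pointwise-periodic return germs with unbounded periods, in the spirit of Montgomery's theorem, are exactly the delicate issue here) and you offer no evidence for it. A smaller slip: dimension five is not forced --- Epstein's theorem only excludes dimension $3$ (and lower), and Epstein--Vogt later produced a real-analytic counterexample on a compact $4$-manifold.

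For comparison, the paper does not prove the theorem by a local surgery around one exceptional circle at all: it points to Sullivan's global topological--geometric construction (which initially yields only a Lipschitz flow on a smooth $5$-manifold, smoothed by an idea of Kuiper), and to Thurston's real-analytic example, a homogeneous construction on $M=\mathbb{R}^5/(\Gamma\times\mathbb{Z}\times\mathbb{Z})$ with $\Gamma$ the discrete Heisenberg group. So even granting your framework, the decisive construction remains missing, and as it stands the proposal is a plan rather than a proof.
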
 

This theorem{\footnote{This theorem is also discussed in McMullen's beautiful talk on Dennis' work at MSRI in the spring of 2022, see \url{https://vimeo.com/702914316?embedded=true&source=vimeo_logo&owner=106107493}}}
 was announced in \cite{MR402824}, and a more detailed argument given in \cite{MR501022}. Dennis' topological-geometric construction in the latter paper yields a flow on a smooth $5$-dimensional manifold $M$ which is Lipschitz, but he states that he sees no reason why the example could not be made smooth. 
In an addendum at the end of the paper, Dennis briefly explains an idea due to Kuiper that results in an example in which the flow is $C^\infty$. He also explains a beautiful construction due to Thurston which yields a real-analytic flow with the desired properties  on the manifold $M=\mathbb{R}^5/(\Gamma\times \mathbb{Z}\times \mathbb{Z})$, where $\Gamma$ is the group
\[
\Gamma = \left\{ \left( \begin{matrix}
1 & x& y\\
0&1&z\\
0&0&1\\
\end{matrix} \right)\;:\; x,y,z\in \mathbb{Z} \right\}\ .
\]
In other words, $\Gamma$ is the so-called \emph{discrete Heisenberg group}. 

\subsection{Rigidity in smooth dynamics} 

The following paper, joint with Shub,  dates back to 1985. 
\begin{theorem}[Expanding maps of the circle, see \cite{MR796755}]
Two $C^r$, $r\ge 2$, expanding maps of the circle which are absolutely continuously conjugate are $C^r$ conjugate. 
\end{theorem}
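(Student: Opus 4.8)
\noindent The plan is to reduce the statement to the structure of the unique absolutely continuous invariant measures of $f$ and $g$, and then to \emph{straighten} those measures by smooth changes of coordinates. Write $f,g\colon S^1\to S^1$ for the two expanding maps and $h\colon S^1\to S^1$ for the given conjugacy, $h\circ f=g\circ h$, with $h$ absolutely continuous. The first ingredient I would invoke is the classical regularity theory of expanding maps: a $C^r$ expanding map $f$ of the circle carries a \emph{unique} $f$-invariant Borel probability measure $\mu_f$ absolutely continuous with respect to Lebesgue measure $\lambda$; moreover $\mu_f$ is equivalent to $\lambda$, and its density $\rho_f=d\mu_f/d\lambda$ is of class $C^{r-1}$ and bounded away from $0$ and $\infty$. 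This comes from the spectral analysis of the transfer (Ruelle--Perron--Frobenius) operator $(\mathcal L u)(y)=\sum_{f(x)=y}u(x)/f'(x)$, which acts quasi-compactly on $C^{r-1}$ with a simple leading eigenvalue $1$ whose eigenfunction is $\rho_f$; positivity and the two-sided bounds follow from the form of $\mathcal L$ together with the topological exactness of $f$. This density-regularity statement is the crux of the matter; everything after it is soft.

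Next I would show that the conjugacy transports invariant measures, namely $h_*\mu_f=\mu_g$. Consider the pushforward $(h^{-1})_*\mu_g$. It is $f$-invariant, since $f_*(h^{-1})_*\mu_g=(f\circ h^{-1})_*\mu_g=(h^{-1}\circ g)_*\mu_g=(h^{-1})_*\mu_g$, using $f\circ h^{-1}=h^{-1}\circ g$ (a rearrangement of $h\circ f=g\circ h$). It is also absolutely continuous with respect to $\lambda$: if $N$ is $\lambda$-null then $(h^{-1})_*\mu_g(N)=\mu_g(h(N))$, and $h(N)$ is $\lambda$-null because $h$ is absolutely continuous, so $\mu_g(h(N))=0$ since $\mu_g\ll\lambda$. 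By the uniqueness recalled above, $(h^{-1})_*\mu_g=\mu_f$, and pushing forward by $h$ gives $h_*\mu_f=\mu_g$. This is the one place where absolute continuity of $h$ is used in an essential way --- without it the conclusion fails, the conjugacy then being totally singular with respect to $\lambda$.

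Now the straightening step. Since $\rho_f\in C^{r-1}$ is positive with $\int_{S^1}\rho_f\,d\lambda=1$, the antiderivative $\psi_f(x)=\int_{x_0}^{x}\rho_f\,d\lambda$ (taking $S^1=\mathbb R/\mathbb Z$) defines an orientation-preserving $C^r$ diffeomorphism of $S^1$ with $(\psi_f)_*\mu_f=\lambda$; define $\psi_g$ analogously for $g$. Put $H=\psi_g\circ h\circ\psi_f^{-1}$. Since $(\psi_f^{-1})_*\lambda=\mu_f$, $h_*\mu_f=\mu_g$, and $(\psi_g)_*\mu_g=\lambda$, we get $H_*\lambda=(\psi_g)_*h_*(\psi_f^{-1})_*\lambda=\lambda$: the homeomorphism $H$ of $S^1$ preserves Lebesgue measure. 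But then $H$ preserves the length of every arc, hence is an isometry of $S^1$, in particular real-analytic. Therefore
\[
h=\psi_g^{-1}\circ H\circ\psi_f
\]
is a composition of $C^r$ maps, so $h\in C^r$; the same identity applied to $h^{-1}=\psi_f^{-1}\circ H^{-1}\circ\psi_g$ gives $h^{-1}\in C^r$, and $h$ is a $C^r$ conjugacy. The only obstruction to $h$ being more regular is the $C^{r-1}$ smoothness of the invariant densities, which is exactly what the hypothesis ``$C^r$'' affords.

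As for the main obstacle: the formal skeleton above is short, and the substantive input is the smoothness $\rho_f\in C^{r-1}$ of the invariant density --- the spectral analysis of the transfer operator on a space of differentiable functions, the delicate points being a Lasota--Yorke type inequality that yields quasi-compactness and the fact, special to the one-dimensional expanding setting, that exactly one derivative is lost. A secondary technical matter is the measure-theoretic bookkeeping around Luzin's property (N) for $h$ and the change-of-variables formula for absolutely continuous homeomorphisms.
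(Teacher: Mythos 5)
Your argument is correct, but its core mechanism is not the one in the paper. Both proofs share the first ingredient: the classical transfer-operator (Krzy\.zewski--Szlenk type) fact that a $C^r$ expanding map, $r\ge 2$, has a unique a.c.i.m.\ with a positive $C^{r-1}$ density --- this is exactly the ``well-known result'' the paper invokes to reduce to the case where both maps preserve Lebesgue measure, and it is also what makes your straightening maps $\psi_f,\psi_g$ of class $C^r$. The difference is in what happens next. The paper's proof (following Shub--Sullivan) keeps working with the conjugacy itself: it uses the $C^2$ expanding hypothesis to iterate small intervals to definite scale with bounded distortion, deduces from absolute continuity that $h$ is Lipschitz, and then bootstraps to $C^r$. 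You instead make the measures do the work: Luzin's property (N) for $h$ plus uniqueness of the a.c.i.m.\ gives $h_*\mu_f=\mu_g$, and then the elementary rigidity of a Lebesgue-measure-preserving circle homeomorphism (it must be an isometry, since it preserves arc lengths) yields the explicit formula $h=\psi_g^{-1}\circ H\circ \psi_f$, hence $h\in C^r$ with $h'=\rho_f/(\rho_g\circ h)$. Your route is softer and shorter once the density regularity is granted, and it has the side benefit of using absolute continuity of $h$ in one direction only; the cost is that all the analytic content (distortion control, quasi-compactness/Lasota--Yorke) is outsourced to the cited spectral theory, whereas the paper's distortion-plus-Lipschitz step is the self-contained mechanism that the authors highlight precisely because it foreshadows the pullback-type rigidity arguments discussed later in the paper. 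No gap, provided you cite the $C^{r-1}$ regularity and ergodicity/uniqueness of the invariant density rather than treat them as obvious.
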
 

The proof of this result is short and starts by invoking well-known results to reduce the problem 
to the setting where both maps preserve the Lebesgue measure. Using the assumption that the maps
are $C^2$ and expanding makes it possible to consider an iterate of the maps taking a small
interval to big scale with bounded distortion. This then
implies that the assumption that the conjugacy $h$ is absolutely continuous gives that $h$ is, in fact,  Lipschitz. 
Going on from there, one obtains that $h$ is $C^r$. 

In some  broader sense the main idea of this paper was the starting point for quite a lot of later research. 
Indeed, the pullback argument that Dennis introduced in the field of holomorphic dynamics
is somewhat similar in spirit. Quite a few other papers followed on from this work. For example,
\begin{itemize}
\item if the multipliers of corresponding periodic points of two topologically conjugate unimodal interval are equal, 
then the conjugacy is smooth, see \cite{MR1677736, MR2229794} (there are corresponding results in higher dimensions); 
\item if a conjugacy between two interval maps is smooth at some point, then it is smooth everywhere, see 
\cite{MR3174743};
\item there are quite a few very interesting related results for group actions of circle maps, see for example \cite{MR2358052}; 
\item  when studying the smoothness of a conjugacy between two maps on defined on Cantor 
sets,  Sullivan introduced the notion of a  scaling function on a Cantor set, see \cite{MR974329} and also \cite{MR2226493}. 
\end{itemize}

\medskip

Another paper, joint with Norton,  on rigidity in dynamics is concerned with Denjoy examples of $C^1$ torus diffeomorphisms
$T^2\to T^2$ which are isotopic to the identity. These are maps $f$ which are  
topologically semi-conjugate to a minimal translation 
on the torus, i.e. $hf=Rh$, where $h$ is a continuous map of $T^2$ onto itself, homotopic to the identity, such that the set of $x\in T^2$ for which the cardinality of $h^{-1}(x)$ is greater than 1 is nonempty and countable. Then the interior of any $h^{-1}(x)$, if nonempty, is a 
wandering domain for $f$.

\begin{theorem}[Smoothness and wandering domains for torus maps, see  \cite{MR1375505}]
\label{thm:withnorton} 
   Let $f$ be a torus map of Denjoy type, and let $\Gamma \neq T^2$ be its minimal set. Then (i) if $f$ preserves a measurable, essentially bounded conformal structure on $\Gamma$, then the maps $f^n$, restricted to the prime end boundaries of the wandering domains, cannot be uniformly quasiconformal; and (ii) if $f$ preserves a $C^{2}$ conformal structure on $\Gamma$, then $f$ cannot be $C^{3}$. 
\end{theorem}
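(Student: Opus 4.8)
The plan is to prove both parts by contradiction, exploiting the one genuinely global constraint available: the wandering domains $U_n := f^n(U_0)$ are pairwise disjoint, so $\sum_n \mathrm{Area}(U_n) \le \mathrm{Area}(T^2) < \infty$, whence $\mathrm{Area}(U_n) \to 0$ as $|n|\to\infty$, while $\overline{\bigcup_{n\in\mathbb{Z}} U_n}$ is closed and $f$-invariant and hence contains the minimal set $\Gamma$; so arbitrarily small wandering domains come arbitrarily close to $\Gamma$. In both parts the contradiction I would aim for is the same: if one can manufacture an $f$-invariant, essentially bounded, measurable conformal structure on all of $T^2$, then by the measurable Riemann mapping theorem $f$ is quasiconformally conjugate to a holomorphic self-map $\bar f$ of a complex torus; since $f$ is isotopic to the identity so is $\bar f$, hence $\bar f$ is a translation, hence minimal; but a quasiconformal conjugacy carries wandering domains to wandering domains, and a minimal map has none.

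For part (i), I would assume, in addition to the invariant bounded structure $\mu$ on $\Gamma$, that the prime-end boundary maps $f^n\colon \partial U_0 \to \partial U_n$ are uniformly quasiconformal, and derive the forbidden global structure. Uniformizing $U_n$ by Riemann maps $\psi_n\colon \D \to U_n$, the circle homeomorphisms $\beta_n := \psi_n^{-1}\circ f^n\circ\psi_0$ have uniformly bounded quasisymmetry, hence uniformly quasiconformal extensions $E_n\colon \D\to\D$. I would use the $E_n$ to transport a conformal structure from $U_0$ onto each $U_n$ with dilatation bounded independently of $n$, arranged to match $\mu$ along $\partial U_n$ — here ``matching along the boundary'' is understood through conformal welding: uniformizing a neighbourhood of the Jordan curve $\partial U_n$ against $\mu$ yields a boundary parametrization that should agree with $\psi_n|_{\partial\D}$ up to a quasisymmetric amount. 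Declaring the structure to be $\mu$ on $\Gamma$ and the transported structure on each $U_n$ gives an essentially bounded measurable structure on $\Gamma\cup\bigcup_n U_n$, a set of full measure in $T^2$; its $f$-invariance has to be secured by choosing the seed structure on $U_0$ compatibly with the relation $E_{n+1}\simeq g_n\circ E_n$, where $g_n := \psi_{n+1}^{-1}\circ f\circ\psi_n$, and it is precisely here that the uniform quasiconformality hypothesis is consumed. Then the measurable Riemann mapping theorem finishes the argument as above.

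For part (ii), $f$ is $C^3$ and preserves a $C^2$ conformal structure on $\Gamma$. Since in dimension two a $C^2$ conformal structure is the same as a $C^2$ complex structure on a neighbourhood of $\Gamma$, I would first straighten it by a $C^2$ change of coordinates (which costs one derivative, so $f$ becomes $C^2$ there), making $f$ holomorphic exactly on $\Gamma$; then its Beltrami coefficient $\mu_f$ is $C^1$ near $\Gamma$ and vanishes on $\Gamma$, so $|\mu_f(z)| \le C\,\mathrm{dist}(z,\Gamma)$. For an orbit staying in a small neighbourhood of $\Gamma$ the quantities $|\mu_f(f^j z)|$ are therefore summable, so $f^n$ has dilatation bounded uniformly in $n$ on a definite neighbourhood of $\Gamma$; equivalently, up to a controlled bounded-dilatation error, $f^n$ is conformal there, and the Koebe distortion theorem applies to it uniformly in $n$. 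With this in hand I would run a two-dimensional Denjoy argument: surround a wandering domain $U_0$ lying close to $\Gamma$ by an annulus $A_0$ of definite modulus inside that neighbourhood; uniform near-conformality keeps $\mathrm{mod}(f^n A_0)$ comparable to $\mathrm{mod}(A_0)$, forcing every $U_n$ to have uniformly bounded eccentricity and hence $\mathrm{Area}(U_n)$ comparable to $(\mathrm{diam}\,U_n)^2$; but minimality of the underlying translation makes $U_n$ return close to $U_0$ at a definite relative scale infinitely often, so infinitely many of the disjoint $U_n$ would have diameter bounded below, contradicting $\sum_n\mathrm{Area}(U_n) < \infty$.

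The hard part of (i) will be the gluing/welding step: confirming that ``uniformly quasiconformal on the prime-end boundary'' really is enough to patch $\mu$ across the (potentially wild) union $\bigcup_n\partial U_n$ into a measurable, essentially bounded, and crucially $f$-invariant structure on $T^2$, while keeping all dilatations bounded. The hard part of (ii) will be the two-dimensional Denjoy estimate itself: converting ``$f^n$ has uniformly bounded dilatation near $\Gamma$'' into genuine bounded geometry of the $U_n$ and extracting the area contradiction — exactly the place where the one-derivative margin between the $C^2$ structure and the $C^3$ map gets spent.
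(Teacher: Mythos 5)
The survey itself states this theorem without proof (it is quoted from Norton--Sullivan \cite{MR1375505}), so your proposal can only be measured against that argument; your global frame for (i) --- manufacture an $f$-invariant bounded measurable conformal structure on all of $T^2$, integrate it by the measurable Riemann mapping theorem, and conclude that $f$ would be quasiconformally conjugate to a conformal torus automorphism isotopic to the identity, i.e.\ a translation, which has no wandering domains --- is indeed the right one. But the step you yourself flag as ``the hard part'' is a genuine gap, not a technicality to be checked: you cannot make the glued structure invariant under the actual $f$, because on the interiors of the wandering domains $f$ is merely a homeomorphism, and the pushforward of a measurable ellipse field under a non-quasiconformal homeomorphism is not even well defined (such maps need not be differentiable a.e.\ nor preserve null sets); no choice of ``seed'' structure on $U_0$, and no conformal welding along the (measure-theoretically irrelevant, possibly wild) curves $\partial U_n$, repairs this. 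The missing idea is to first modify $f$ inside the wandering domains --- e.g.\ replace $f|_{U_n}$ by $\psi_{n+1}\circ E(\beta_{n+1})\circ E(\beta_n)^{-1}\circ \psi_n^{-1}$, where $\beta_n$ is the prime-end boundary map of $f^n$ and $E$ denotes a conformally natural (Beurling--Ahlfors/Douady--Earle) extension. This changes nothing off the interiors, so the invariant structure on $\Gamma$, the Denjoy-type property, the wandering domains and their prime-end boundary dynamics are untouched; and for the modified map the iterates on $U_0$ are uniformly quasiconformal precisely because the $\beta_n$ are uniformly quasisymmetric, so transporting a reference structure along the orbit of $U_0$ is legitimate, boundedly distorted, and invariant by construction. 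Only then does the MRMT argument close.

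In (ii) the central estimate fails as stated. From $|\mu_f(z)|\le C\operatorname{dist}(z,\Gamma)$ you assert that $\sum_j|\mu_f(f^jz)|<\infty$ along orbits near $\Gamma$, but disjointness of the wandering domains only controls $\sum_n \mathrm{Area}(U_n)$; since the disc of radius $\operatorname{dist}(f^jz,\Gamma)$ about $f^jz$ lies in the gap containing $f^jz$, what you get for free is $\sum_j \operatorname{dist}(f^jz,\Gamma)^2<\infty$ --- square-summability, not summability (this is exactly why the one-dimensional Denjoy argument does not transplant to $T^2$, where diameters need not be summable). So a linear bound on the Beltrami coefficient does not yield uniformly bounded dilatation of $f^n$; the point of the hypothesis ``$C^3$ map preserving a $C^2$ structure'' is to produce a \emph{quadratic} smallness of the conformal distortion of $f$ relative to the invariant structure near $\Gamma$, so that the summable quantity is the one supplied by disjoint areas --- and your straightening bookkeeping, which even spends a derivative, never produces that gain. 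Your closing step is also a non sequitur: the $U_n$ returning near $U_0$ ``at a definite relative scale'' does not force infinitely many of them to have diameter bounded below. Once uniform quasiconformality of $f^n$ on the wandering domains near $\Gamma$ is in hand, the efficient conclusion is not an area/eccentricity count but a reduction to part (i): conjugating by Riemann maps shows the prime-end boundary maps are then uniformly quasisymmetric, while a $C^2$ conformal structure is in particular measurable and essentially bounded, contradicting (i).
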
 

One consequence of this theorem is that one cannot have a $C^3$  Denjoy diffeomorphism so that the iterates of some disc are all disjoint.  Of course this theorem is a partial higher dimensional analogue of Denjoy's famous theorem  showing that a $C^2$ diffeomorphism of the circle without periodic orbits cannot have wandering intervals, 
and therefore must be topologically conjugate to an irrational rotation. Again quite a few papers 
followed on from this work, for example: 
\begin{itemize}
\item it is not possible to have $C^1$ toral diffeomorphism with wandering round discs, see 
\cite{MR4015145};
\item very recently it was shown that there exist smooth and even real analytic 
diffeomorphisms of Denjoy type on the torus  with a wandering topological disc, see  
\cite{MR4015145,MR4345825};  see also \cite{MR4094968}.
\item wandering topological discs were also established for smooth two dimensional diffeomorphisms, 
see \cite{MR3685668},  and even for polynomial maps in higher dimensions \cite{MR3505180, MR3831030}. 
\end{itemize} 

Another theorem Dennis proved, jointly with Gambaudo and Tresser,  is the following (informally stated).

\begin{theorem}[Smoothness and linking number of periodic orbits for diffeomorphism of the disc, see  \cite{MR1259516}]
\label{thm:withgambaudo} Let $f$ be a $C^1$ diffeomorphism of the disc with periodic orbits $p_n$, $n\ge 0$, 
 so that for each $n\ge 0$ the periodic orbit $p_{n+1}$ {\lq}cycles as  a satelite{\rq} around $p_n$.
Then the average {\em linking number} between $p_{n+1}$ and $p_n$ must converge as $n\to \infty$. 
\end{theorem}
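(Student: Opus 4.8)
My plan is to pass from $f$ to its \emph{suspension flow} and read the statement off the resulting solid torus. Form the mapping torus $V=(D^2\times\R)/\!\sim$, where $(x,s+1)\sim(f(x),s)$; it is a solid torus carrying the suspension flow $\phi^t$, and each periodic orbit $p_n$ of period $q_n$ becomes a periodic orbit $\Gamma_n$ of $\phi^t$ winding $q_n$ times around $V$. Since each $\Gamma_n$ is a \emph{single} orbit and $q_n\mid q_{n+1}$, the hypothesis that $p_{n+1}$ ``cycles as a satellite around $p_n$'' amounts to a nested family of flow-adapted solid tori $T_0\supset T_1\supset\cdots$ with $\Gamma_n$ the core of $T_n$ and $\Gamma_{n+1}\subset\operatorname{int}T_n$ running $c_n:=q_{n+1}/q_n\ge 2$ times longitudinally around $T_n$ with a nontrivial pattern braid. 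Framing $T_n$ by a global cross-section of the flow gives a well-defined integer $\operatorname{lk}(\Gamma_{n+1},\Gamma_n)$, and $L_n$ is this linking number normalised by the periods so that it records the asymptotic rate at which $\Gamma_{n+1}$ winds around $\Gamma_n$.

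I would first note that the transverse size $\tau_n$ of $T_n$ tends to $0$: to accommodate the $c_n\ge 2$ longitudinal strands of $\Gamma_{n+1}$ inside $T_n$ a tube about $\Gamma_{n+1}$ must have meridian disc of diameter at most a bounded multiple of $\tau_n/c_n$, while the $C^1$ bound on $\|Df^{\pm1}\|$ along orbits keeps the tubes from degenerating; hence $\tau_n\to 0$ geometrically and $\Gamma_m\subset T_n$ for all $m\ge n$. Consequently the orbits accumulate on a $\phi^t$-invariant solenoidal set $K$ whose cross-sectional first-return dynamics is the inverse limit of the cyclic permutations on the $q_n$ strands of the $T_n$, that is, an odometer (adding machine); in particular $\phi^t|_K$ is minimal and \emph{uniquely ergodic}, say with invariant probability $\mu$.

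Now I would express $L_n$ as a dynamical average: writing $\operatorname{lk}(\Gamma_{n+1},\Gamma_n)=\tfrac1{2\pi}\oint_{\Gamma_{n+1}}\theta_n$ for the angular $1$-form $\theta_n$ of the cross-section framing around $\Gamma_n$, division by the period $q_{n+1}$ exhibits (up to the exact normalisation) $L_n$ as the average over the periodic orbit $\Gamma_{n+1}$ of the instantaneous angular velocity $\psi_n$ of $\Gamma_{n+1}$ about $\Gamma_n$. As $n\to\infty$ the uniform measures on the $\Gamma_{n+1}$ all converge weakly to $\mu$ — any weak limit is $\phi^t$-invariant and supported in every $T_m$, hence on $K$, so it equals $\mu$ — and therefore convergence of $L_n$ would follow at once \emph{provided} the functions $\psi_n$ converge uniformly near $K$ to a continuous limit $\psi_\infty$, giving $L_n\to\int_K\psi_\infty\,d\mu$.

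That uniform convergence of the $\psi_n$ is the heart of the matter, and the step I expect to be the main obstacle; it is also where differentiability of $f$ is used essentially — for a merely continuous $f$ the satellite orbits can spiral erratically about one another from one scale to the next, and arbitrary cascades (with divergent $L_n$) are realisable. I would try to prove it by a distortion estimate: comparing the first-return map of $\phi^t$ to a meridian disc of $T_{n+1}$ with the one to a meridian disc of $T_n$, the chain rule together with the uniform continuity of $Df$ should bound the per-period defect in angular winding by $O(\tau_{n+1}/\tau_n)=O(2^{-n})$, so that $(\psi_n)$ — equivalently $(L_n)$ — is Cauchy. A caveat worth stressing: the bookkeeping relating the integer $\operatorname{lk}$, the chosen framing of $T_n$, the cabling numbers $c_n$ and the periods $q_n$ must be done carefully, since the precise normalisation of the ``average linking number'' is exactly what makes the conclusion come out cleanly.
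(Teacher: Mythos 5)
This survey only states the theorem informally and cites Gambaudo--Sullivan--Tresser \cite{MR1259516} for the proof, so there is no in-paper argument to match your outline against; judging it on its own terms, the suspension set-up and the reduction of $L_n$ to the average of an angular observable against the orbital measure of $\Gamma_{n+1}$ are reasonable, but the two claims that would make the proof work are not established. First, the geometric picture underlying everything is unjustified: ``satellite'' is a purely topological (braid-type) condition on nested families of topological disks/tubes, and it does not force the transverse sizes $\tau_n$ to tend to zero, let alone geometrically --- two disjoint topological disks inside a disk of diameter $\tau$ can each have diameter comparable to $\tau$, and the whole point of the theorem (and the reason homeomorphism counterexamples exist) is precisely that no bounded geometry of the nest may be assumed; the bound $\|Df^{\pm1}\|<\infty$ gives nothing here. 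Consequently the intersection set $K$ need not be a solenoid, the return dynamics is in general only an extension of the odometer rather than the odometer itself, minimality and unique ergodicity can fail, and the orbital measures of the $\Gamma_{n+1}$ need not converge to a single $\mu$. Since your route channels the entire conclusion through $L_n\to\int_K\psi_\infty\,d\mu$ for this unique $\mu$, this is fatal as written.

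Second --- and you flag it yourself --- the step that actually carries the theorem, uniform convergence of the angular-velocity observables $\psi_n$, is not proved, and the mechanism you propose would not prove it: angles are scale invariant, so smallness of $\tau_{n+1}/\tau_n$ controls no rotation defect by itself; $C^1$ regularity yields no bounded-distortion estimate of the kind you invoke (that requires $C^{1+\alpha}$ or $C^2$); and the $\psi_n$ are defined relative to different tubes and framings on neighbourhoods that need not shrink, so there is not even a natural continuous candidate $\psi_\infty$ on $K$. The way $C^1$ actually bites in the original argument is different in kind: one linearizes along orbits and exploits the projectivised derivative cocycle --- compositions of orientation-preserving linear maps act on directions as degree-one circle maps, so the windings of different infinitesimal directions over the same orbit segment differ by a uniformly bounded amount --- and one combines such estimates with the exact homological cabling identity $\operatorname{lk}(\Gamma_m,\Gamma_n)=(q_m/q_{n+1})\operatorname{lk}(\Gamma_{n+1},\Gamma_n)$ for $m>n$, which shows (with the product-of-periods normalisation) that the average linking of every deeper orbit with $\Gamma_n$ already equals $L_n$, so that the problem becomes comparing these numbers across scales with errors that are summable or vanish. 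Your outline contains neither this mechanism nor any substitute for it, so the convergence of $(L_n)$ is not established.
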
 

One of the inspirations for this paper was a question by Smale, who asked whether it was possible to 
construct a smooth diffeomorphism on the disc with infinitely many hyperbolic periodic saddles, but without periodic sinks or sources (or neutral points). In \cite{MR431282,MR654885} such examples were constructed in the 
$C^1$ respectively $C^2$ category. It was subsequently shown that one can construct smooth and even real analytic diffeomorphisms with these properties, see \cite{MR994094} and also \cite{MR2192529,MR2836053}, namely with a Feigenbaum-Coullet-Tresser Cantor attractor. The constructions in those papers 
 build on the renormalisation theory developed for interval maps. The braid type of the periodic orbits
 is quite different from those in  \cite{MR431282,MR654885}, and as far as we know it is not yet known 
whether there are smooth diffeomorphisms which are topologically conjugate to the ones constructed in those
papers (in which it is conjectured that one cannot construct $C^3$ diffeomorphisms topologically conjugate 
to their examples). 

%

What the theorems discussed in this section 
have in common is that they are about invariant structures, and that the full theory in this direction has not yet been completed. For this reason having the additional conformal structure was quite appealing to Dennis. 
Another reason to start working on holomorphic dynamics in the 1980's was of course that he saw a compelling analogue with the theory on Kleinian groups that he had been working on previously -- see section \ref{sec:kleinian} below. 

\subsection{Further results}

Another research direction we would like to explicitly mention is Dennis' work on currents, see \cite{MR433464, MR415679}. In his beautiful survey talk at the Abel Prize lectures at the University of Oslo  on Dennis' work, \'{E}tienne Ghys singles out this work (this talk is available on YouTube{\footnote{See \tt{https://www.youtube.com/watch?v=reC5-XUeH{\_}4}.}}).

\section{Dynamics and ergodic theory of Kleinian groups}
\label{sec:kleinian} \label{sec:kleiniangroups}

Dennis's interest in the geometric, dynamical and ergodic properties of discrete groups of hyperbolic isometries dates back to the mid to late 1970s. His work in this area was motivated in part by Mostow's rigidity theorem from two decades earlier, and in part by the work of Ahlfors on Kleinian groups, especially his famous finiteness theorem from 1965. 
Dennis was also greatly influenced by Thurston's work on geometric structures over $3$-manifolds. 
It was Lipman Bers who first told Dennis about the so-called \emph{Ahlfors conjecture}, according to which the limit set of a Kleinian group acting in hyperbolic $3$-space  either has zero Lebesgue measure in the sphere at infinity, or else it is equal to the entire sphere. This is now a theorem, thanks to the work of several mathematicians -- see for instance \cite{MR2355387} and references therein. 

Let us present a brief account of the contributions of Dennis to this beautiful subject.  Before we do that, we need to recall a few preliminary notions. For general background on the geometry of discrete groups and hyperbolic geometry, especially in dimension $3$, we recommend \cite{MR698777}, \cite{MR959135} and \cite{MR2355387}. For a systematic exposition of the work of Dennis (and Patterson) on the ergodic theory of discrete groups, see \cite{MR1041575}.

\subsubsection*{The Moebius group} 
Consider the one-point compactification $\widehat{\mathbb{R}}^n= \mathbb{R}^n\cup \{\infty\}$ of Euclidean $n$-space. 
The Moebius group in dimension $n$ is the group $MG(\mathbb{R}^n)$ consisting of all transformations $T: \widehat{\mathbb{R}}^n\to \widehat{\mathbb{R}}^n$ which arise as all possible compositions of (linear) conformal transformations of the form $x\mapsto Ax+b$, where $A$ is a scalar multiple of an orthogonal matrix and $b\in \mathbb{R}^n$, with the inversion $J: \widehat{\mathbb{R}}^n\to \widehat{\mathbb{R}}^n$ given by $J(x)=x/|x|^2$ for $x\neq 0$, $J(0)=\infty$ and $J(\infty)=0$. The elements of $MG(\mathbb{R}^n)$ are called \emph{Moebius transformations}. 

\subsubsection*{Hyperbolic space} Let us denote by $\mathbb{H}^n$ hyperbolic $n$-space, which we view as the open unit ball $B^n=\{x:\,|x|<1\}\subset  \mathbb{R}^n$ endowed with the \emph{hyperbolic metric} (also called \emph{Poincaré metric}) given by
\[
ds = \frac{2|dx|}{1-|x|^2 }\ .
\] 
The \emph{ideal boundary} or \emph{sphere at infinity} of hyperbolic $n$-space is by definition the sphere $S^{n-1}=\partial B^n$, endowed with the standard conformal structure inherited from $\mathbb{R}^n$. It is customary to denote the sphere at infinity by $S_\infty$.
The group $\isom^+(\mathbb{H}^n)$  of orientation-preserving isometries of this metric consists precisely of all \emph{Moebius transformations} that preserve the unit ball, \emph{i.e.,} those $T\in MG(\mathbb{R}^n)$ such that $T(B^n)=B^n$. Every  $T\in \isom^+(\mathbb{H}^n)$ acts on the sphere at infinity as a \emph{conformal automorphism}. 
The elements of $\isom^+(\mathbb{H}^n)$ are classified according to their action on $S_\infty$ as follows. If $T\in \isom^+(\mathbb{H}^n)$ has exactly one fixed point in $S_\infty$, then $T$ is said to be a \emph{parabolic} transformation. If it has exactly two fixed points in $S_\infty$, then it is called a \emph{loxodromic} transformation. All other elements of $\isom^+(\mathbb{H}^n)$ are said to be \emph{elliptic}. 

\subsubsection*{Kleinian groups}  A (generalized) \emph{Kleinian group} is a discrete subgroup $\Gamma\subset \isom^+(\mathbb{H}^n)$. Discreteness means in particular that the orbit $\Gamma(x)=\{\gamma x: \gamma\in \Gamma\}$ of any point $x\in B^n$ can only accumulate on the sphere at infinity. The set $\Lambda(\Gamma)\subseteq S_\infty$ of all such accumulation points is the \emph{limit set} of $\Gamma$ (see figure \ref{figure2}). Its complement $\Omega(\Gamma)=S_\infty \setminus \Lambda(\Gamma)$ is the \emph{domain of discontinuity} or \emph{ordinary set} of $\Gamma$. Clearly, both $\Lambda(\Gamma)$ and $\Omega(\Gamma)$ are completely invariant under the action of $\Gamma$. 
When $n=3$, the ordinary set $\Omega(\Gamma)$ is precisely the \emph{domain of normality} of $\Gamma$, that is to say, the set of all points $z\in S_\infty\equiv \widehat{\mathbb{C}}$ having a neighborhood $V_z\subset \widehat{\mathbb{C}}$ such that $\{\gamma|_{V_z} :\;\gamma\in \Gamma\}$ is a normal family in the sense of Montel (thus, $\Omega(\Gamma)$ is the analogue of the Fatou set for a rational map, and the limit set $\Lambda(\Gamma)$ is the analogue of the Julia set -- see \S {\ref{sullivandict}}).  A Kleinian group is said to be \emph{non-elementary} if its limit set consists of more than two points.


\begin{figure}[ht]
\begin{center}
\hbox to \hsize{
\includegraphics[width=4.2in]{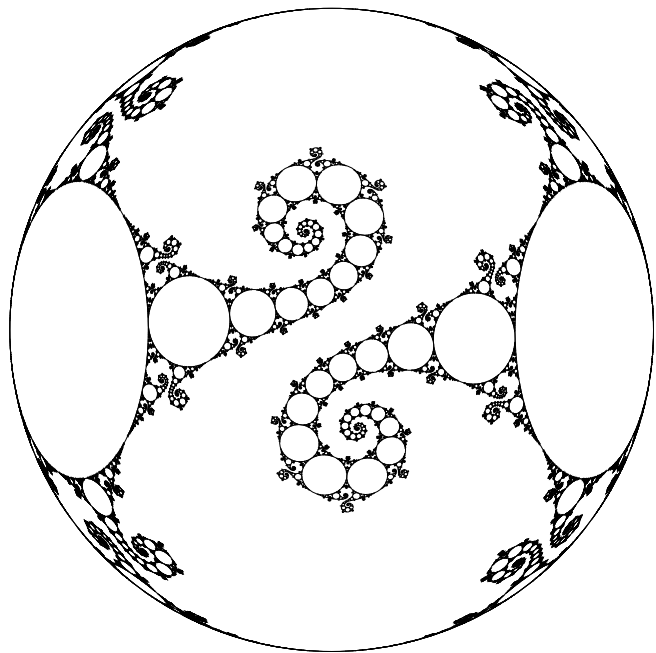}
}
\end{center}
\caption[figure2]{\label{figure2} The limit set of a Kleinian group sitting in the sphere at infinity is oftentimes a fractal object. [Credit: This picture was generated using C. McMullen's program "lim", available at https://people.math.harvard.edu/~ctm/programs/.]}
\end{figure}


There are various ways under which the $\Gamma$-orbit of a point $x\in B^n$ can accumulate on a point of $\Lambda(\Gamma)$ in the sphere at infinity. The two most important are a \emph{conical} approach and a \emph{horospherical} approcah. Let us be more precise.

\begin{definition}
Let $\Gamma \subset \isom^+(\mathbb{H}^n)$ be a Kleinian group and let $\xi\in \Lambda(\Gamma)$.
\begin{enumerate}
\item[(i)] We say that $\xi$ is a \emph{conical limit point} of $\Gamma$ if for each $x\in B^n$ there exists a sequence $\{\gamma_n\}\subset \Gamma$ such that the ratio 
\[
\frac{|\xi-\gamma_n(x)|}{1-|\gamma_n(x)|}
\]
remains bounded as $n\to\infty$.
\item[(ii)] We say that $\xi$ is a \emph{horospherical limit point} of $\Gamma$ if for each $x\in B^n$ there exists a sequence $\{\gamma_n\}\subset \Gamma$ such that the ratio 
\[
\frac{|\xi-\gamma_n(x)|^2}{1-|\gamma_n(x)|}
\]
goes to zero as $n\to\infty$. 
\end{enumerate}
\end{definition}

It is an exercise to show that if $\xi$ is the fixed point of a loxodromic element of $\Gamma$, then $\xi$ is a conical limit point of $\Gamma$. 
The set of all conical limit points of $\Gamma$ is called the \emph{conical limit set}, and it is denoted $\Lambda_c(\Gamma)$. The set of all horospherical limit points of $\Gamma$ is called the \emph{horospherical limit set}, and it is denoted $\Lambda_h(\Gamma)$. Clearly, these are  both $\Gamma$-invariant. 

An important class of Kleinian groups is the class consisting of so-called \emph{convex co-compact groups}. Given a (non-elementary) Kleinian group $\Gamma$, let $\Lambda=\Lambda(\Gamma)$ be its limit set, and consider the convex hull $C(\Lambda)$ of $\Lambda$ inside hyperbolic space. Then $C(\Lambda)$ is invariant under $\Gamma$, and we say that $\Gamma$ is convex co-compact if the quotient $C(\Lambda)/\Gamma$ is compact. 
It is not difficult to see that if $\Gamma$ is convex co-compact, then every element of $\Lambda$ is a conical limit point -- in other words, $\Lambda_c(\Gamma)=\Lambda(\Gamma)$ in this case.

\subsubsection*{Hyperbolic manifolds} The quotient space $M_\Gamma = \mathbb{H}^n/\Gamma$ of hyperbolic $n$-space by a Kleinian group $\Gamma$ is what one calls an \emph{orbifold} (after Thurston). Such quotient is always a manifold when $n=2,3$, but it may fail to be one when $n>3$. However, if $\Gamma$ acts \emph{freely and properly discontinuously} on $\mathbb{H}^n$, then $M_\Gamma$ is indeed a manifold. Such manifolds are called \emph{hyperbolic}. 
The natural quotient projection $\mathbb{H}^n\to M_\Gamma$ is a proper covering map, and therefore the hyperbolic metric of $\mathbb{H}^n$ descends to $M_\Gamma$. Thus, $\mathbb{H}^n$ is the universal covering space of $M_\Gamma$, and the fundamental group $\pi_1(M_\Gamma)$ is (isomorphic to) $\Gamma$. 
It is not difficult to see that if two Kleinian groups $\Gamma_1, \Gamma_2$ are \emph{conjugate} subgroups of $\isom^+(\mathbb{H}^n)$, \emph{i.e.,} if there exists $\gamma\in \isom^+(\mathbb{H}^n)$ such that $\Gamma_1=\gamma^{-1}\Gamma_2\gamma$, then the corresponding orbifolds $M_{\Gamma_i}$, $i=1,2$, are \emph{isometric}, and conversely. 

\subsubsection*{Quasi-conformal homeomorphisms}
A \emph{quasiconformal homeomorphism} $h: S_\infty\to S_\infty$ is a homeomorphism which is differentiable Lebesgue almost-everywhere, and whose derivative at each point of differentiability  maps round spheres onto ellipsoids whose ratios between the largest axis and the smallest axis yield a measurable function on the sphere that is essentially bounded. The essential norm of this function is called the quasi-conformal distortion of $h$, denoted $K_h$. 
It turns out that if $K_h=1$ then $h$ is in fact \emph{conformal}.  Every quasi-conformal homeomorphism determines a measurable field of ellipsoids, also known as a \emph{measurable conformal structure} on $S_\infty$. In dimension two, such measurable conformal structures can be integrated to recover $h$ up to post-composition by a conformal map -- a famous result known as the measurable Riemann mapping theorem -- but no such theorem exists in higher dimensions. 

\subsection{Sullivan's rigidity theorem} It is a truly remarkable theorem due to G.~Mostow \cite{MR236383} that complete finite-volume hyperbolic $n$-manifolds are determined up to isometry by their fundamental groups when $n\geq 3$. This is the famous \emph{Mostow rigidity theorem}, which can be formally stated as follows.

\begin{theorem}[Mostow Rigidity] 
Let $M$ and $N$ be two complete, finite-volume hyperbolic $n$-manifolds with $n\geq 3$, and let $\theta:\pi_1(M)\to \pi_1(N)$ be an isomorphism between their fundamental groups. Then there exists an isometry $f:M\to N$ between both manifolds such that the induced isomorphism $f_{*}:\pi_1(M)\to \pi_1(N)$ agrees with $\theta$. 
\end{theorem}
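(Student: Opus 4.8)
The plan is to reconstruct the isometry from the purely algebraic datum $\theta$ in three movements: promote $\theta$ to an equivariant self-map of the sphere at infinity, show that this boundary map is conformal, and then read off the conjugating isometry. First I would pass to universal covers: both $\widetilde{M}$ and $\widetilde{N}$ are isometric to $\mathbb{H}^n$, and $\Gamma_M=\pi_1(M)$, $\Gamma_N=\pi_1(N)$ act on $\mathbb{H}^n$ as finite-covolume lattices in $\isom^+(\mathbb{H}^n)$. By the Milnor--\v{S}varc lemma, using that $M$ and $N$ are compact, the group isomorphism $\theta$ induces a $\theta$-equivariant quasi-isometry $\widetilde{f}\colon\mathbb{H}^n\to\mathbb{H}^n$, that is, $\widetilde{f}\circ\gamma=\theta(\gamma)\circ\widetilde{f}$ for all $\gamma\in\Gamma_M$. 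In the merely finite-volume case one additionally exploits the thick--thin decomposition together with the fact that $\theta$ carries parabolic elements to parabolic elements, so that $\widetilde{f}$ can be built compatibly with the cusps.

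Next I would extend $\widetilde{f}$ to the boundary. Since quasi-geodesics of $\mathbb{H}^n$ remain within bounded distance of genuine geodesics (stability of quasi-geodesics, i.e.\ the Morse lemma), $\widetilde{f}$ extends to a homeomorphism $\phi\colon S_\infty\to S_\infty$, and a standard distortion estimate shows $\phi$ is quasi-conformal. Because passing to the boundary intertwines the two actions, one gets $\phi\circ(\gamma|_{S_\infty})=(\theta(\gamma)|_{S_\infty})\circ\phi$ for every $\gamma\in\Gamma_M$.

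The crux --- and the step I expect to be by far the hardest --- is to upgrade $\phi$ from quasi-conformal to conformal, and this is exactly where the hypothesis $n\geq 3$ is decisive. As $\phi$ is quasi-conformal it is differentiable Lebesgue-almost everywhere, and so it determines a measurable conformal structure (a measurable field of ellipsoids) $\mu$ on $S_\infty$; by the equivariance above and the conformality of each $\theta(\gamma)$, this $\mu$ is $\Gamma_M$-invariant. I would then invoke ergodicity of the $\Gamma_M$-action on $S_\infty$, indeed on $S_\infty\times S_\infty$, which holds because $M$ has finite volume (equivalently, because the geodesic flow on $T^1M$ is ergodic) --- this is precisely the point at which Dennis's ergodic-theoretic circle of ideas, organised around conical limit points and the Hopf argument, enters the picture. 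Combining $\Gamma_M$-invariance of $\mu$ with this ergodicity and the north--south dynamics of the loxodromic elements of $\Gamma_M$ (whose pairs of fixed points are dense in $S_\infty\times S_\infty$) forces $\mu$ to agree almost everywhere with the standard round conformal structure; equivalently $K_\phi=1$ almost everywhere. By the rigidity of $1$-quasi-conformal maps recalled among the preliminaries, $\phi$ is therefore conformal, hence the restriction to $S_\infty$ of a unique $g\in\isom^+(\mathbb{H}^n)$. (In dimension two this step genuinely fails: there is a positive-dimensional family of invariant measurable conformal structures --- parametrised by Teichm\"uller space --- which is why the theorem requires $n\geq 3$.)

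Finally, since a Moebius transformation preserving the ball is determined by its boundary values, the identities $\phi\circ(\gamma|_{S_\infty})=(\theta(\gamma)|_{S_\infty})\circ\phi$ and $\phi=g|_{S_\infty}$ yield $g\gamma g^{-1}=\theta(\gamma)$ for all $\gamma\in\Gamma_M$. Thus $g$ conjugates $\Gamma_M$ onto $\Gamma_N$, and by the correspondence between conjugate Kleinian groups and isometric quotients, the isometry $f\colon M=\mathbb{H}^n/\Gamma_M\to\mathbb{H}^n/\Gamma_N=N$ induced by $g$ satisfies $f_{*}=\theta$ on fundamental groups, as required.
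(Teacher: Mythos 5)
Your outline—promote $\theta$ to an equivariant quasi-isometry (pseudo-isometry) of $\mathbb{H}^n$, extend it to a quasiconformal homeomorphism of $S_\infty$, use an ergodic argument to show the induced invariant measurable conformal structure must be the standard one so the boundary map is conformal, and then read off the conjugating Moebius transformation and hence the isometry—is correct and is essentially the same route the paper sketches (following Mostow), including the same caveats about the non-compact finite-volume case handled by Marden and Prasad. No substantive difference to report.
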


In fact, this theorem was proved by Mostow for \emph{closed} manifolds, \emph{i.e.,} compact manifolds without boundary. It was then extended to finite volume manifolds by Marden \cite{MR349992} in dimension $n=3$, and by Prasad \cite{MR385005} in all dimensions $n\geq 3$. 

The way Mostow proved his theorem was by first showing that the manifolds $M, N$ are \emph{pseudo-isometric} in the following sense. A continuous, surjective map $\phi: M\to N$ between two hyperbolic manifolds is a \emph{pseudo-isometry} if (a) it induces an isomorphism between the fundamental groups of both manifolds and moreover (b) there exist constants $K>1$ and $\delta>0$ such that 
\[
\frac{1}{K} \leq \frac{d_N(\phi(x),\phi(y))}{d_M(x,y)} \leq K
\]
for each pair of points $x,y\in M$ such that $d_M(x,y)\geq \delta$. 
Here $d_M,d_N$ denote the hyperbolic distances in $M$ and $N$, respectively.
Condition (b) is saying that a pseudo-isometry distorts hyperbolic distances between points by a bounded amount, provided these points are sufficiently far apart. 

If one lifts a given pseudo-isometry between $M$ and $N$ to their universal covering space, one gets a pseudo-isometry of hyperbolic $n$-space. Mostow proved in \cite{MR236383} that every pseudo-isometry of $\mathbb{H}^n$ extends continuously to the sphere at infinity as a \emph{quasiconformal homeomorphism} $h: S_\infty\to S_\infty$. 
The key step in the proof is to show by means of an ergodic argument that if $\Gamma$ is a finite-volume Kleinian group then there is only one measurable conformal structure on $S_\infty$ which is $\Gamma$-invariant. This implies that the quasiconformal homeomorphism $h$ is actually conformal, which in turn means that the two associated Kleinian groups $\Gamma_M\simeq \pi_1(M), \Gamma_N\simeq \pi_1(N)$ are conjugate subgroups of $\isom^+(\mathbb{H}^n)$, and therefore the hyperbolic manifolds $M$ and $N$ must be isometric. 

Here is another way of stating Mostow's theorem. Following \cite{MR624833}, we say that a hyperbolic manifold $M$ is \emph{Mostow-rigid} if any pseudo-isometry between $M$ and another hyperbolic manifold $N$ is homotopic to an isometry. Recast in this language, Mostow's theorem states that every complete, finite-volume hyperbolic $M$ is Mostow-rigid. 

Dennis proved in \cite{MR624833}, in his own words, a \emph{maximal extension} of Mostow's theorem. In order to state his theorem, let us introduce some notation. Given a hyperbolic manifold $M$, a point $p\in M$ and $r>0$, let $V_M(p,r)$ denote the hyperbolic volume of the set $\{x\in M:\,d_M(p,x)<r\}$. For example, for the hyperbolic ball of radius $r$ in hyperbolic $n$-space, we have
\[
V_{\mathbb{H}^n}(0,r) = \omega_n \int_{0}^{\tanh{(r/2)}} \frac{2^n|x|^{n-1} d|x|}{(1-|x|^2)^n} = 
\omega_n\int_{0}^{r} \sinh^{n-1}{t}\,dt \sim  \mathrm{const}\cdot e^{(n-1)r}\ .
\]
Here, $\omega_n$ denotes the euclidean area of $S_\infty = S^{n-1}$. 

\begin{lemma}\label{undergrowth}
Let $M=\mathbb{H}^n/\Gamma$ be a hyperbolic $n$-manifold. Then the following assertions are equivalent.
\begin{enumerate}
\item[(i)] The ratio $V_M(p,r)/V_{\mathbb{H}^n}(0,r)$ goes to zero as $r\to\infty$. 
\item[(ii)] The Kleinian group $\Gamma$ acts conservatively on $S_\infty$.
\item[(iii)] The horospherical limit set of $\Gamma$ has full Lebesgue measure on $S_\infty$.
\end{enumerate}
\end{lemma}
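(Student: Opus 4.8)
My plan is to run the three conditions against one quantity: the asymptotic proportion of geodesic rays issuing from $p$ that stay globally length‑minimising in $M$ forever. Lift $p$ to the origin $0\in B^n$ and let $\mathcal F$ be the Dirichlet fundamental domain of $\Gamma$ centred at $0$; being an intersection of hyperbolic half‑spaces it is convex, hence star‑shaped about $0$, so for each $\xi\in S_\infty$ the set $\{s\ge0:\,r_\xi(s)\in\overline{\mathcal F}\}$ is an interval with left endpoint $0$, say of length $c(\xi)\in(0,\infty]$, the distance from $p$ to the cut locus in the direction $\xi$ (here $r_\xi$ is the unit‑speed ray from $0$ to $\xi$). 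Since $\exp_p$ is a diffeomorphism from $\{sv:\,0\le s<c(v)\}$ onto $M$ minus the (null) cut locus, and the Jacobian of $\exp_p$ in $\mathbb H^n$ is $\sinh^{n-1}s$, Tonelli gives
\[
V_M(p,r)\;=\;\omega_n\int_0^r \sinh^{n-1}(s)\,q(s)\,ds ,\qquad q(s):=\nu\!\left(\{\xi\in S_\infty:\,c(\xi)>s\}\right),
\]
where $\nu$ is the round probability (Lebesgue) measure on $S_\infty$ and $q$ is non‑increasing. As $V_{\mathbb H^n}(0,r)=\omega_n\int_0^r\sinh^{n-1}(s)\,ds$ and the weight $\sinh^{n-1}$ grows exponentially, the ratio in (i) converges, with limit $q(\infty)=\nu(\{c(\xi)=\infty\})$. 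So (i) is equivalent to: the set of directions whose ray is minimising forever is $\nu$‑null.

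I would then prove (ii) $\Leftrightarrow$ (iii). A Poisson‑kernel computation gives $|\gamma'(\xi)|=(1-|\gamma^{-1}0|^2)/|\xi-\gamma^{-1}0|^2$ for the conformal derivative on the sphere, so the defining inequality of the horospherical limit set rephrases as $\xi\in\Lambda_h(\Gamma)$ iff $\limsup_\gamma|\gamma'(\xi)|=\infty$; equivalently, writing $\beta_\xi(x)=\lim_{s\to\infty}(d(x,r_\xi(s))-s)$ for the Busemann function at $\xi$ based at $0$, $\xi\notin\Lambda_h(\Gamma)$ iff the orbit $\Gamma 0$ reaches only bounded depth $D_\xi:=\sup_\gamma(-\beta_\xi(\gamma 0))<\infty$ inside horoballs based at $\xi$. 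The invariant set $S_\infty\setminus\Lambda_h$ is the countable increasing union of the bounded‑depth pieces $\{D_\xi\le M\}$, on each of which one builds a measurable wandering set sweeping it out (here the parabolic subgroups of $\Gamma$, whose $0$‑orbits lie on a single horosphere, need separate bookkeeping), so $\Gamma$ acts \emph{dissipatively} there; conversely, at a point of $\Lambda_h$ the group elements with $|\gamma'(\xi)|$ large carry small round balls about $\xi$ onto sets of definite size, so a Lebesgue‑density argument shows the $\Gamma$‑orbit of $\xi$ returns to every positive‑measure set, i.e.\ $\Gamma$ is \emph{conservative} on $\Lambda_h$. Hence $\Gamma$ is conservative on $S_\infty$ iff $S_\infty\setminus\Lambda_h$ is null, which is (iii).

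Finally I would close the loop with (i) $\Leftrightarrow$ (iii). If $r_\xi$ is minimising forever then $d(0,r_\xi(s))\le d(\gamma 0,r_\xi(s))$ for all $s$ and all $\gamma$; letting $s\to\infty$ yields $\beta_\xi(\gamma 0)\ge0$, so $D_\xi=0$ and therefore $\{c(\xi)=\infty\}\subseteq S_\infty\setminus\Lambda_h(\Gamma)$, whence (iii) forces this set to be null and (i) holds. For the converse, if $\xi\notin\Lambda_h$ then $D_\xi<\infty$ and, outside the countable (hence null) set of parabolic fixed points, the supremum is attained by some $\gamma_\ast$; $\Gamma$‑equivariance of the Busemann cocycle then gives $D_{\gamma_\ast^{-1}\xi}=\beta_\xi(\gamma_\ast 0)+D_\xi=0$, so $c(\gamma_\ast^{-1}\xi)=\infty$. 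Thus, up to a null set, $S_\infty\setminus\Lambda_h$ lies in the $\Gamma$‑orbit of $\{c(\xi)=\infty\}$, and since $\nu$ is quasi‑invariant, $\{c(\xi)=\infty\}$ being null forces $S_\infty\setminus\Lambda_h$ to be null — that is, (i) implies (iii).

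The step I expect to be the main obstacle is the genuinely ergodic‑theoretic content of (ii) $\Leftrightarrow$ (iii): converting the soft notion of conservativity into a concrete geometric recurrence picture. Precisely, the construction of the wandering set on the bounded‑depth part of $S_\infty$ — where Hopf's ergodic decomposition does the real work and the parabolic elements force a separate argument — and, dually, the Lebesgue‑density argument producing orbit recurrence over $\Lambda_h$, are the two places where the proof must be careful rather than formal. By contrast the coarea identity, the exponential‑weight limit, the Poisson‑kernel formula, and the Busemann‑cocycle bookkeeping in (i) $\Leftrightarrow$ (iii) are routine, modulo the minor point that the maximal horoball depth is attained for $\nu$‑almost every non‑horospherical direction.
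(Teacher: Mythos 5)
Your reduction of (i) is fine: the polar--coordinate/Tonelli computation over the Dirichlet domain, the exponential concentration of the weight $\sinh^{n-1}$, and the identification $\{c(\xi)=\infty\}=\{D_\xi=0\}$ (the ideal boundary of the Dirichlet domain) are correct, and they recover exactly the ``fourth equivalent assertion'' mentioned in the text; note, though, that the paper states the lemma without proof (it is Sullivan's theorem from \cite{MR624833}), so the comparison can only be with the standard argument there. The first genuine gap is your claim that for $\xi\notin\Lambda_h$ the supremum $D_\xi=\sup_\gamma\bigl(-\beta_\xi(\gamma 0)\bigr)$ is attained ``outside the countable set of parabolic fixed points''. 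Nothing forces this: if the supremum is not attained, the orbit points of nearly maximal depth accumulate at $\xi$, but accumulation of an orbit at $\xi$ does not make $\xi$ a fixed point of any group element. The directions you are discarding are precisely the \emph{Garnett points} -- points neither horospherical nor in any $\Gamma$-translate of $\{D=0\}$ -- and they can form an uncountable set; the fact that they have zero Lebesgue measure is a theorem in its own right and is exactly the nontrivial content of (i)$\Rightarrow$(iii). Your dissipativity sketch on $S_\infty\setminus\Lambda_h$ runs into the same wall: the sets $\{D_\xi\le M\}$ are not $\Gamma$-invariant ($D$ transforms by the Busemann cocycle), the natural wandering set is $\{D=0\}$ (whose translates overlap only along ideal boundaries of bisectors, a null set), and its translates cover $S_\infty\setminus\Lambda_h$ only up to the Garnett set -- so the missing step is the same one you labelled ``minor'' in your last paragraph; it is in fact the crux.

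The second gap is the mechanism you propose for conservativity on $\Lambda_h$. With $y=\gamma^{-1}0$ and $r=|\xi-y|$, bounded distortion holds on $B(\xi,cr)$ and the image has diameter comparable to $(1-|y|^2)/r$; this is bounded below only when $|\xi-y|\lesssim 1-|y|$, i.e.\ along \emph{conical} approach, whereas the horospherical condition only gives $(1-|y|^2)/r^2\to\infty$, which is compatible with $(1-|y|^2)/r\to 0$ (take $1-|y|\sim r^{3/2}$). Since there are groups -- for instance $\mathbb{Z}^d$-covers of compact hyperbolic manifolds with $d\ge 3$ -- whose conical limit set is Lebesgue-null while $\Lambda_h$ has full measure, ``definite size images'' proves conservativity only on a possibly null subset and cannot yield (iii)$\Rightarrow$(ii) in exactly the cases the lemma is about. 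The repair is simpler than a density-point argument: if $W\subset\Lambda_h$ were a wandering set of positive measure, then $\sum_\gamma \nu(\gamma W)\le \nu(S_\infty)<\infty$, and since $d(\nu\circ\gamma)/d\nu=|\gamma'|^{n-1}$ this gives $\sum_\gamma|\gamma'(\xi)|^{n-1}<\infty$ for a.e.\ $\xi\in W$, contradicting $\sup_\gamma|\gamma'(\xi)|=\infty$ on $\Lambda_h$. With that substitution, and with an honest proof that the Garnett set is null (or, alternatively, by proving only the cycle (iii)$\Rightarrow$(ii)$\Rightarrow$(i) this way and then confronting (i)$\Rightarrow$(iii) head-on as Sullivan does), your outline would become a proof.
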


A fourth assertion equivalent to the above three is that the fundamental domain of $\Gamma$ in $\mathbb{H}^n$ has zero Lebesgue measure on $S_\infty$. 
We now have all the necessary elements to state the Sullivan rigidity theorem. 

\begin{theorem}[Sullivan Rigidity] Let $M$ be a complete hyperbolic $n$-manifold, and suppose that $M$ satisfies any of the equivalent conditions of Lemma \ref{undergrowth}. Then $M$ is Mostow rigid.
\end{theorem}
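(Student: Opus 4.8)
The plan is to follow the architecture of Mostow's proof, with the conditions of Lemma~\ref{undergrowth} playing the role that finite volume plays there; the hypothesis gets used in exactly one place, an ergodicity statement. Let $N=\mathbb{H}^n/\Gamma'$ be a second hyperbolic manifold and $\phi\colon M\to N$ a pseudo-isometry, with induced isomorphism $\theta=\phi_*\colon\Gamma\to\Gamma'$. Lifting $\phi$ to universal covers gives a $\theta$-equivariant pseudo-isometry of $\mathbb{H}^n$, which by Mostow's boundary extension theorem extends to a quasiconformal homeomorphism $h\colon S_\infty\to S_\infty$ with $h\circ\gamma=\theta(\gamma)\circ h$ for all $\gamma\in\Gamma$. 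As in Mostow's argument, everything reduces to showing $h$ is conformal: a conformal $h$ is a Moebius transformation (Liouville's theorem for $n\ge 4$; an orientation-preserving conformal self-homeomorphism of $\widehat{\mathbb{C}}$ is Moebius for $n=3$), hence extends to an isometry of $\mathbb{H}^n$ conjugating $\Gamma$ onto $\Gamma'$, which descends to an isometry $M\to N$ inducing $\theta$ on fundamental groups, and is therefore homotopic to $\phi$ because hyperbolic manifolds are aspherical.

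To prove $h$ conformal, recall that a quasiconformal map is differentiable Lebesgue-a.e., and that at such a point $\xi$ the derivative of $h$ sends round spheres to ellipsoids; let $e(\xi)$ be the ordered tuple of ratios of their semiaxes. Differentiating $h\circ\gamma=\theta(\gamma)\circ h$ and using that $\gamma,\theta(\gamma)$ act conformally on the round spheres $S_\infty$ — so their derivatives are scalars times orthogonal maps, rescaling every semiaxis of an ellipsoid by a common factor — gives $e(\gamma\xi)=e(\xi)$ for all $\gamma$. Thus $e$ is a $\Gamma$-invariant measurable function; and here the hypothesis enters, for by Lemma~\ref{undergrowth} together with Sullivan's dichotomy for discrete groups of Moebius transformations (a conservative action on a round sphere is ergodic; see \cite{MR1041575}) the $\Gamma$-action on $S_\infty$ is ergodic, so $e$ equals a constant tuple $e_0$ a.e.

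It remains to exclude $e_0$ being the eccentricity of a non-round ellipsoid. This is the heart of the matter — it extends Mostow's statement that a finite-volume $\Gamma$ admits a unique $\Gamma$-invariant measurable conformal structure on $S_\infty$. Were $e_0$ non-round, the ellipsoid field of $h$ would give a $\Gamma$-equivariant measurable reduction of the holonomy cocycle $\Gamma\times S_\infty\to SO(n-1)$ (the orthogonal parts of the derivatives $D\gamma_\xi$) to the stabilizer $H_0\subsetneq SO(n-1)$ of such a shape; pulled back along the endpoint map $T^1\mathbb{H}^n\to S_\infty$ and pushed to the quotient, this is a flow-invariant measurable reduction of the frame bundle of $M$ to $H_0$. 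But the hypothesis forces the frame flow on $T^1M$ to be ergodic, and $\Gamma$ — having, by Lemma~\ref{undergrowth}, a limit set of full measure, hence $\Lambda(\Gamma)=S_\infty$ — preserves no proper totally geodesic subspace and so is Zariski dense in $\isom^+(\mathbb{H}^n)$, whence the holonomy rotations around closed geodesics lie in no proper closed subgroup of $SO(n-1)$. Together these exclude such a reduction (a non-constant one would violate ergodicity of the frame flow, a constant one would put the holonomy inside $H_0$), so $e_0$ is round, $K_h=1$, and $h$ is conformal.

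The step I expect to be the main obstacle is the ergodicity input: that the conservativity hypothesis — equivalent, by Sullivan's dichotomy, to ergodicity of $\Gamma$ on $S_\infty$ — also forces ergodicity of the frame flow on $T^1M$ in the present, possibly infinite-volume, generality. In Mostow's finite-volume case this is Moore's ergodicity theorem; here it must be squeezed out of the conditions of Lemma~\ref{undergrowth} by a Hopf-type argument, via absolute continuity of the horospherical foliations, the full-measure horospherical limit set, and Zariski density of $\Gamma$, and this is essentially the whole content of the generalization. A minor further difficulty is that the ellipsoid field is defined only a.e., so the reduction-of-cocycle bookkeeping (or an equivalent Lebesgue-density analysis at loxodromic fixed points, where loxodromics are linearized by a Moebius change of coordinates) must be handled carefully. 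Notably the argument uses no one-dimensional complex analysis, so it is uniform for all $n\ge 3$, even though the measurable Riemann mapping theorem — which streamlines the $n=3$ case — has no higher-dimensional counterpart.
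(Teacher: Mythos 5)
Your opening reduction (lift the pseudo-isometry, extend to an equivariant quasiconformal $h$ on $S_\infty$, note that conformal implies Moebius implies an isometry homotopic to $\phi$) is exactly the frame of the paper's argument. The gap is in both ergodicity inputs on which everything else rests. First, ``a conservative action on the sphere is ergodic'' is not Sullivan's dichotomy (that dichotomy is conservative versus dissipative) and it is false in the generality of the theorem: ergodicity of $\Gamma$ on $(S_\infty,\mathrm{Leb})$ is equivalent, via Poisson extension, to $M$ having no nonconstant bounded harmonic functions, whereas conservativity is the volume-growth condition (i) of Lemma~\ref{undergrowth}. A normal cover of a closed hyperbolic $n$-manifold with nonamenable (e.g.\ free) deck group has critical exponent $<n-1$ by Brooks' theorem, hence volume growth $o\!\left(e^{(n-1)r}\right)$ and so satisfies the lemma, yet it carries nonconstant bounded harmonic functions (nonamenable deck groups are never Liouville); its boundary action is conservative but not ergodic, so you cannot conclude that the eccentricity function $e(\xi)$ is constant. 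Second, and worse, ergodicity of the frame flow (indeed already of the geodesic flow) is equivalent to the conical limit set having full measure (Hopf--Tsuji--Sullivan, divergence type at $n-1$), which is strictly stronger than the hypotheses: $\mathbb{Z}^d$-covers of closed manifolds with $d\geq 3$ satisfy the lemma (polynomial volume growth) while their geodesic flow is completely dissipative. So the step you yourself single out as ``essentially the whole content of the generalization'' cannot be squeezed out of the hypotheses: the theorem covers manifolds for which every ergodicity tool in your plan genuinely fails.

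This is precisely the point of Sullivan's argument, which the paper records: rigidity is deduced from Theorem~\ref{noinvariantlinefields}, which holds for an \emph{arbitrary} Kleinian group and is proved not by an ergodic theorem but by a recurrence/Lebesgue-density ``zooming'' argument at limit points, showing that any bounded measurable $\Gamma$-invariant ellipsoid field agrees a.e.\ with the round structure on the limit set. Given that, one pulls back the standard structure by your quasiconformal $h$ to get such an invariant field, invokes Theorem~\ref{noinvariantlinefields}, and uses condition (iii) of Lemma~\ref{undergrowth} (the horospherical limit set, hence the limit set, has full Lebesgue measure) to conclude $K_h=1$, so $h$ is conformal and your final reduction completes the proof. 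In short, your proposal correctly reduces the theorem to the uniqueness of the invariant conformal structure, but the mechanism you propose for that uniqueness -- ergodicity of the sphere action plus ergodicity of the frame flow, as in Mostow's finite-volume case -- is unavailable here; the missing idea is the recurrence-based argument constituting Theorem~\ref{noinvariantlinefields}.
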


Dennis deduces this theorem from the following result, also due to him.

\begin{theorem}\label{noinvariantlinefields}
Let $\Gamma\subset \isom^+(\mathbb{H}^n)$ be a Kleinian group, and consider its action on the sphere at infinity. Suppose $\nu$ is a measurable conformal structure (\emph{i.e.,} a measurable field of ellipsoids) which is Lebesgue almost everywhere invariant under $\Gamma$. Then $\nu$ agrees a.e. with the standard conformal structure of $S_\infty$ on the limit set $\Lambda_\Gamma$. 
\end{theorem}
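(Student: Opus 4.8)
The plan is to recast the conclusion as a statement about the \emph{eccentricity} of $\nu$, and then to use decisively that $\Gamma$ acts by Moebius transformations. A measurable conformal structure is a measurable choice, at a.e.\ point of $S_\infty$, of an ellipsoid up to scaling; its shape is recorded by the eccentricity $K(x)\ge 1$, the ratio of the largest to the smallest semi-axis, and $K(x)=1$ exactly when the ellipsoid is a round ball, i.e.\ when $\nu$ agrees with the standard structure at $x$. Every element of $\isom^{+}(\mathbb{H}^n)$ acts on $S_\infty$ conformally --- its derivative at each point is a positive scalar times an orthogonal map, which rotates an ellipsoid without changing its shape --- so $\Gamma$-invariance of $\nu$ forces $K$ to be $\Gamma$-invariant a.e.\ on $S_\infty$. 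Thus it suffices to prove $K=1$ a.e.\ on $\Lambda_\Gamma$, that is, that $\Gamma$ admits no invariant measurable field of ellipsoids of eccentricity $>1$ on a positive-measure part of its limit set. (Here the conformality of the dynamics is essential; the formally analogous statement for Julia sets of rational maps is a famous open problem, precisely because rational dynamics is not globally conformal.) We may assume $\Gamma$ is non-elementary and $|\Lambda_\Gamma|>0$, as otherwise the claim is vacuous; and, since the argument below localizes at conical limit points, we use that Lebesgue-a.e.\ limit point is conical (automatic under the equivalent conditions of Lemma~\ref{undergrowth}, which are the ones under which the theorem is applied to deduce Sullivan rigidity). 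By Lusin's theorem the problem then reduces to deriving a contradiction from the existence of $\varepsilon>0$, a compact set $E\subset\Lambda_\Gamma$ of positive measure on which $\nu$ is continuous with $K\ge 1+\varepsilon$, and a Lebesgue density point $\xi\in E$ that is a conical limit point of $\Gamma$.

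Next I would run a zooming, or renormalization, argument at $\xi$. Conicality furnishes a sequence $\{g_j\}\subset\Gamma$ such that $g_j^{-1}$ maps shrinking balls $B(\xi,r_j)$, $r_j\to 0$, onto regions of diameter comparable to $1$ with uniformly bounded distortion, and moreover so that the pole $g_j(\infty)$ of $g_j^{-1}$ lies within distance $O(r_j)$ of $\xi$. Pre-composing $g_j^{-1}$ with the rescalings $A_j(w)=\xi+r_jw$, the blown-up maps $\Phi_j:=g_j^{-1}\circ A_j$ are Moebius transformations, all defined on a fixed small ball $B$, with derivatives uniformly bounded above and below on $B$ and with poles $A_j^{-1}(g_j(\infty))$ confined to a bounded region. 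By compactness of the Moebius group (normality in the sense of Montel), after passing to a subsequence $\Phi_j\to\Phi$, a Moebius transformation that is non-degenerate and --- crucially --- genuinely non-affine: its pole sits at finite distance, so $\Phi$ is not a similarity, and the rotational part of its derivative is non-constant on $B$.

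Now I would push $\Gamma$-invariance through the limit. For each $j$, invariance of $\nu$ under $g_j^{-1}\in\Gamma$ gives $\Phi_j^{*}\nu=A_j^{*}\nu$; since $A_j$ is a similarity, its scalar derivative part does not affect ellipses up to scale, so $A_j^{*}\nu$ is just $w\mapsto\nu(\xi+r_jw)$, which converges in measure on $B$ to the constant ellipse $\nu(\xi)=:E_0$ --- this is where the density point property and the continuity of $\nu$ on $E$ are combined (an Egorov/Lusin argument). On the other hand $\Phi_j^{*}\nu\to\Phi^{*}\nu$ in measure, because $\Phi_j\to\Phi$ with uniformly bounded non-degenerate derivatives and $\nu$ is a fixed essentially bounded field. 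Hence $\Phi^{*}\nu=E_0$ on $B$; equivalently, on the ball $U:=\Phi(B)$, centered at the limit point $\Phi(0)\in\Lambda_\Gamma$, one has $\nu=\Phi_{*}E_0$ --- a real-analytic field of ellipsoids of constant eccentricity $K(\xi)\ge 1+\varepsilon$ whose axes genuinely rotate (because $\Phi$ is not a similarity and $E_0$ is not round). Translating $U$ by $\Gamma$ and using minimality of the $\Gamma$-action on $\Lambda_\Gamma$, one concludes that $\nu$ is real-analytic, of constant eccentricity $c_0:=K(\xi)>1$, on the $\Gamma$-invariant open set $\bigcup_{\gamma\in\Gamma}\gamma U\supseteq\Lambda_\Gamma$. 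Thus $\Gamma$ would preserve a real-analytic line field on a neighborhood of its limit set --- impossible for a non-elementary Kleinian group with $|\Lambda_\Gamma|>0$: linearizing at the attracting fixed point of a loxodromic element of $\Gamma$ whose rotational part is nontrivial (such an element must exist, for otherwise $\Gamma$ would be conjugate into a lower-dimensional isometry group, its limit set would lie in a round subsphere, and $|\Lambda_\Gamma|$ would be zero), the transversal rotation forces the invariant line field to be round at that point, a contradiction. This contradiction gives $K=1$ a.e.\ on $\Lambda_\Gamma$, which is the assertion.

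The main obstacle is the middle part: extracting from the bare conical hypothesis a renormalized limit $\Phi$ that is a genuine non-degenerate, non-affine Moebius transformation, and at the same time making sure that $\Gamma$-invariance survives the limiting process --- this forces one to marshal the density point, the essential continuity supplied by Lusin, and the continuity of pull-back under the convergence $\Phi_j\to\Phi$ all at once. By comparison, the passage to eccentricities is purely formal, and the closing step --- non-existence of an invariant real-analytic line field for a non-elementary group --- is classical.
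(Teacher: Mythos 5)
The paper itself gives no proof of this statement --- it is a survey, and the theorem is quoted from Sullivan's paper \cite{MR624833} --- so your proposal has to be measured against the actual content of that result. Your zooming argument at a conical limit point (Lusin set, density point, shadows of orbit points, rescaled Moebius maps $\Phi_j\to\Phi$, pulling the invariance through the limit) is indeed the standard and essentially correct mechanism, but it only yields the conclusion a.e.\ on the \emph{conical} limit set $\Lambda_c(\Gamma)$, and the step by which you claim this suffices contains a genuine error. You assert that ``Lebesgue-a.e.\ limit point is conical, automatic under the equivalent conditions of Lemma~\ref{undergrowth}''. Those conditions give full Lebesgue measure of the \emph{horospherical} limit set $\Lambda_h(\Gamma)$ (that is exactly equivalence (ii)$\Leftrightarrow$(iii) of the lemma), not of $\Lambda_c(\Gamma)$, and the two can differ drastically: for instance for $\mathbb{Z}^d$-covers ($d\ge 3$) of closed hyperbolic manifolds the action on $S_\infty$ is conservative, $\Lambda_h$ is everything, yet the group is of convergence type and $\Lambda_c$ has zero Lebesgue measure. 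So your argument misses precisely the set on which the theorem must hold for the deduction of Sullivan rigidity, and handling horospherical-but-not-conical points is the main difficulty of \cite{MR624833}: at such a point the orbit enters every horoball at $\xi$ without shadowing the geodesic, so the associated shadows are tiny balls located near $\xi$ but not centred at $\xi$ and not at a scale comparable to their distance from $\xi$, and the naive blow-up along $B(\xi,r_j)$ has no group element to expand it with bounded distortion. (Relatedly, the theorem as printed in the survey is stated for an arbitrary Kleinian group on all of $\Lambda_\Gamma$; Sullivan's sharp statement is on the conservative part $=\Lambda_h$, and it cannot be improved to the full limit set, since positive-measure wandering sets inside $\Lambda_\Gamma$ support nonstandard invariant structures --- the very phenomenon discussed later in the survey. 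Restricting to conical points is therefore neither faithful to the statement nor sufficient for its intended use.)

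Two secondary points, fixable but currently unjustified. First, you need the rescaled poles $(g_j(\infty)-\xi)/r_j$ to stay in a bounded region to make $\Phi$ non-affine; conicality gives bounded distortion on the shadow (hence a lower bound $|g_j(\infty)-\xi|\gtrsim r_j$) but no upper bound of order $r_j$, so $\Phi$ may well come out a similarity. This does not wreck the argument --- a constant (parallel) non-round field near a limit point is already enough to aim for a contradiction --- but your closing step leans on the ``genuinely rotating axes'' of $\Phi_*E_0$, so it should be rewritten so as not to need non-affineness. Second, the endgame is too quick: invariance of the ellipsoid $\nu(p)$ under the rotational part $A$ of a loxodromic fixing $p$ is not by itself a contradiction (in dimensions $n\ge 4$ a nontrivial rotation can preserve a non-round ellipsoid), and the claim that triviality of all rotational parts forces $\Gamma$ into a proper round subsphere is a nontrivial theorem that you would need to cite or prove, particularly outside $n=3$. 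A cleaner finish is to use the exact linearization of a loxodromic to show the analytic field is parallel in linearizing coordinates near each attracting fixed point and then play two loxodromics in general position against each other; but however you close it, the essential missing content remains the conservative (horospherical) case, which is the heart of Sullivan's theorem.
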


In the case of hyperbolic $3$-manifolds, \emph{i.e.,} when $n=3$, a major consequence of this theorem is obtained by combining it with the Ahlfors finiteness theorem. Recall that a Riemann surface $X$ is said to be of \emph{finite type} if $X$ is obtained from a compact Riemann surface by removing from it a finite set of points. 

\begin{theorem}[Ahlfors Finiteness Theorem]
Let $\Gamma\subset PSL(2,\mathbb{C})$ be a finitely generated Kleinian group. Then $\Omega(\Gamma)/\Gamma$ is a finite union of Riemann surfaces of finite type. 
\end{theorem}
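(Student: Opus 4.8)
The plan is to follow Ahlfors' original strategy, in which finite generation enters only through a single dimension bound for a space of automorphic forms. The elementary case (limit set of at most two points) is immediate, so I assume $\Gamma$ non-elementary; by Selberg's lemma I may then pass to a torsion-free subgroup of finite index, the conclusion transferring freely between such a subgroup and $\Gamma$ because a finite quotient of a finite-type surface is again of finite type. So assume $\Gamma$ torsion-free and non-elementary. Then $X:=\Omega(\Gamma)/\Gamma$ is a (possibly disconnected) Riemann surface, and since the complement of each component $\Omega_i$ of $\Omega(\Gamma)$ contains the limit set and hence at least three points, every component $X_i=\Omega_i/\Gamma_i$, with $\Gamma_i=\mathrm{Stab}_\Gamma\,\Omega_i$, carries a complete hyperbolic metric.

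The engine of the proof is the complex vector space $A_2(\Gamma)$ of integrable holomorphic quadratic differentials for $\Gamma$: holomorphic $\varphi$ on $\Omega(\Gamma)$ with $(\varphi\circ\gamma)\,(\gamma')^{2}=\varphi$ for all $\gamma\in\Gamma$ and $\int_X|\varphi|<\infty$. Pushing a $\Gamma$-invariant differential down to each component identifies $A_2(\Gamma)$ with the $\ell^1$-direct sum of the Bergman-type spaces $Q(X_i)$ of integrable holomorphic quadratic differentials on the $X_i$, so that $\dim_{\mathbb C}A_2(\Gamma)=\sum_i\dim_{\mathbb C}Q(X_i)$ in $[0,\infty]$. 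Two inputs are then needed. The first is the dimension count on a fixed surface: by Riemann--Roch, $\dim_{\mathbb C}Q(X_i)=3g_i-3+n_i$ when $X_i$ has finite type $(g_i,n_i)$ (integrability forcing at worst simple poles at the punctures), while $\dim_{\mathbb C}Q(X_i)=\infty$ when $X_i$ is of infinite analytic type. The second, and the main analytic content, is the finiteness estimate $\dim_{\mathbb C}A_2(\Gamma)<\infty$, the one place finite generation is used: I would obtain it from Ahlfors' original argument, which can be recast as an Eichler--Shimura-type injection of $A_2(\Gamma)$ into the group-cohomology space $H^1(\Gamma;\mathbb C^{3})$ with coefficients in the quadratic polynomials, a space which is finite-dimensional whenever $\Gamma$ is finitely generated; Bers' refinement makes the bound linear in the number $N$ of generators, namely $\dim_{\mathbb C}A_2(\Gamma)\le 3(N-1)$.

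Granting the two inputs, most of the statement follows immediately. Since $\sum_i\dim Q(X_i)<\infty$, no component can be of infinite analytic type, so each $X_i$ has finite type $(g_i,n_i)$ with $2g_i-2+n_i\ge 1$; and since every such surface with $3g_i-3+n_i\ge 1$ contributes at least $1$ to the sum, only finitely many of the $X_i$ are not thrice-punctured spheres -- the thrice-punctured sphere being the unique hyperbolic surface of finite type with $3g-3+n=0$. Thus $X$ is a finite union of finite-type Riemann surfaces together with, a priori, infinitely many thrice-punctured spheres.

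The hard part is precisely this last clause: thrice-punctured-sphere components are invisible to the quadratic-differential count, so their number must be bounded by other means -- this is exactly the point that required the correction Ahlfors published the year after his original paper, and that was also settled by Greenberg and, via an area estimate, by Bers. I would close with Bers' area inequality: the total hyperbolic area of $\Omega(\Gamma)/\Gamma$ is finite, indeed at most $4\pi(N-1)$ for an $N$-generated group, which one extracts from a Poincaré-series estimate for weight-two forms. By Gauss--Bonnet each hyperbolic component of finite type $(g_i,n_i)$ has area $2\pi(2g_i-2+n_i)\ge 2\pi$, the thrice-punctured sphere realizing the minimum; hence $X$ has at most $2(N-1)$ components in all. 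Combined with the preceding step, $\Omega(\Gamma)/\Gamma$ is a finite disjoint union of Riemann surfaces of finite type, as claimed.
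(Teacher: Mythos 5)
The paper does not prove this statement: it quotes the Ahlfors Finiteness Theorem as classical background (due to Ahlfors, 1964--65) in order to deduce the corollary that the quasiconformal deformation space of a finitely generated Kleinian group is parametrized by the finite-dimensional $\mathrm{Teich}(\Omega(\Gamma)/\Gamma)$, so there is no in-paper argument to compare yours against. Judged against the classical literature, your outline is the standard and correct Ahlfors--Bers route, and it is organized sensibly: Selberg's lemma to reduce to the torsion-free case, hyperbolicity of each component of $\Omega(\Gamma)$ from non-elementarity, the identification of the $\Gamma$-invariant integrable holomorphic quadratic differentials with the direct sum of the spaces $Q(X_i)$, finite-dimensionality via the Bers (Eichler--Shimura type) map into $H^1(\Gamma;\Pi_2)$ with the bound $3(N-1)$, and the Riemann--Roch count $3g-3+n$ forcing each component to be of finite type with only finitely many exceptions. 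You also correctly isolate the genuine subtlety -- that thrice-punctured spheres have $Q(X_i)=0$ and so are invisible to this count, which is exactly the gap in Ahlfors' original paper -- and you close it with Bers' area inequality $\mathrm{area}(\Omega/\Gamma)\leq 4\pi(N-1)$ together with Gauss--Bonnet, giving at most $2(N-1)$ components. Be aware that the two analytic inputs you invoke (injectivity of the Bers map into Eichler cohomology, and the area inequality, which the paper's dictionary lists as the ``Bers area theorem'') are themselves substantial theorems that your sketch cites rather than proves; this is legitimate for an outline, and neither depends circularly on the finiteness theorem, but a self-contained proof would have to supply them.
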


In particular, the \emph{Teichm{\"u}ller space} $\mathrm{Teich}(\Omega(\Gamma)/\Gamma)$ is finite-dimensional. 
Hence we have the following result.

\begin{corollary}
Let $\Gamma\subset PSL(2,\mathbb{C})$ be a finitely generated Kleinian group. Then the space of quasi-conformal deformations of $\Gamma$ is parametrized by $\mathrm{Teich}(\Omega(\Gamma)/\Gamma)$, and is therefore finite-dimensional. 
\end{corollary}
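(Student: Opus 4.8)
The plan is to identify the quasiconformal deformation space of $\Gamma$ with the unit ball in the space of $\Gamma$-invariant Beltrami differentials on $S_\infty\equiv\widehat{\mathbb{C}}$ (modulo the natural equivalence), use Theorem~\ref{noinvariantlinefields} to conclude that every such Beltrami differential is supported on the ordinary set $\Omega(\Gamma)$, then transport the whole picture down to the quotient surface $\Omega(\Gamma)/\Gamma$, where it becomes Teichm\"uller theory; the Ahlfors Finiteness Theorem then supplies the finite-dimensionality.

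First I would set up the correspondence. A quasiconformal deformation of $\Gamma$ is a quasiconformal homeomorphism $h:\widehat{\mathbb{C}}\to\widehat{\mathbb{C}}$ with $h\Gamma h^{-1}\subset PSL(2,\mathbb{C})$, two deformations being identified when they differ by post-composition with a M\"obius transformation together with an isotopy trivial on the limit set. By the measurable Riemann mapping theorem, $h$ is determined up to M\"obius post-composition by its complex dilatation $\mu_h=\bar\partial h/\partial h$, and the condition $h\Gamma h^{-1}\subset PSL(2,\mathbb{C})$ translates precisely into the invariance relation $(\mu_h\circ\gamma)\,\overline{\gamma'}/\gamma'=\mu_h$ for every $\gamma\in\Gamma$ --- that is, $\mu_h$ represents a $\Gamma$-invariant measurable conformal structure on $S_\infty$. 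Thus the deformation space is the unit ball of the space $M(\Gamma)$ of such invariant Beltrami differentials, modulo the induced equivalence.

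Next comes the key input. By Theorem~\ref{noinvariantlinefields}, any $\mu\in M(\Gamma)$ agrees a.e.\ with the standard structure on $\Lambda(\Gamma)$, i.e.\ $\mu=0$ a.e.\ on the limit set, so every invariant Beltrami differential is supported on $\Omega(\Gamma)$. Since $\Gamma$ acts properly discontinuously there, $\mu$ descends to a Beltrami differential on $\Omega(\Gamma)/\Gamma$, and conversely every Beltrami differential on the quotient pulls back to an element of $M(\Gamma)$, extended by zero across $\Lambda(\Gamma)$; because $|\overline{\gamma'}/\gamma'|=1$ this is a norm-preserving bijection between the unit ball of $M(\Gamma)$ and the unit ball of Beltrami differentials on $\Omega(\Gamma)/\Gamma$, and one checks it carries the equivalence above to the Teichm\"uller equivalence (two Beltrami differentials equivalent iff the associated quasiconformal maps of the quotient are isotopic rel ideal boundary). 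Hence the deformation space of $\Gamma$ is canonically $\mathrm{Teich}(\Omega(\Gamma)/\Gamma)$. Finally, by the Ahlfors Finiteness Theorem $\Omega(\Gamma)/\Gamma$ is a finite disjoint union of Riemann surfaces of finite type, so its Teichm\"uller space is a finite product of the (finite-dimensional) Teichm\"uller spaces of those pieces.

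The step I expect to demand the most care is matching the two equivalence relations --- verifying that quasiconformal conjugacies of $\Gamma$ agreeing up to a M\"obius map and an isotopy supported off the limit set correspond exactly to Teichm\"uller-equivalent Beltrami differentials on the quotient --- together with the minor nuisance that $\Gamma$ may have torsion, so that $\Omega(\Gamma)/\Gamma$ is a priori an orbifold; this is handled in the usual way by passing to a torsion-free finite-index subgroup via Selberg's lemma (which affects neither finite generation nor finite-dimensionality) and descending. It is worth stressing that it is precisely Theorem~\ref{noinvariantlinefields} that makes the parametrization \emph{surjective}: were invariant Beltrami differentials not forced to vanish on $\Lambda(\Gamma)$, the limit set could a priori carry additional invariant structures and the quotient Teichm\"uller space would fail to capture all deformations.
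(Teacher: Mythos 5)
Your argument is correct and is exactly the route the paper intends: identify quasiconformal deformations with $\Gamma$-invariant Beltrami differentials via the measurable Riemann mapping theorem, invoke Theorem~\ref{noinvariantlinefields} to force these to vanish a.e.\ on $\Lambda(\Gamma)$ so that they descend to $\Omega(\Gamma)/\Gamma$, and then apply the Ahlfors Finiteness Theorem to get finite-dimensionality of $\mathrm{Teich}(\Omega(\Gamma)/\Gamma)$. The paper only sketches this in a sentence, so your write-up simply supplies the details (equivalence relations, torsion) of the same combination of ingredients.
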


In particular, if $\Gamma$ has a dense orbit on the sphere $S^2$, then the space of quasi-conformal deformations of $\Gamma$ reduces to a point, \emph{i.e.,} $\Gamma$ is quasi-conformally rigid. 

\subsection{Conformal densities and Patterson-Sullivan measures} 

Let $\Gamma$ be a non-elementary Kleinian group acting on $\mathbb{H}^n\cup S_\infty$, and as before let $\Lambda(\Gamma)\subseteq S_\infty$ be its limit set. Also as before, let $\Lambda_c(\Gamma)\subseteq \Lambda(\Gamma)$ be its conical limit set.
Generalizing work of Patterson for Fuchsian groups \cite{MR450547}, Dennis was able to construct, in \cite{MR556586}, an invariant measure for the geodesic flow on the unit tangent bundle of the hyperbolic manifold $\mathbb{H}^n/\Gamma$. This measure comes from a \emph{conformal density} $\mu$ on the sphere at infinity, and the geodesic flow is either ergodic or dissipative, depending on whether $\mu$ assigns positive or zero measure to $\Lambda_c(\Gamma)$, respectively. 
We proceed to a brief description of the construction. Details can be found either in the original paper by Dennis, or in the book by Nicholls \cite{MR1041575}.

\subsubsection*{Conformal densities}
Let us start by clarifying what is meant by conformal density. 
Let $M$ be a smooth manifold, let $\mathcal{R}$ be a non-empty collection of Riemannian metrics on $M$, and let $\alpha>0$. 
Following \cite[p.~421]{MR556586}, we define a \emph{conformal density of dimension $\alpha$}, or \emph{$\alpha$-conformal density}, on $M$ (relative to $\mathcal{R}$) to be a function that assigns to each element $g\in \mathcal{R}$ a positive, finite Borel measure $\mu_g$ on $M$ in such a way that, whenever $g_1$ and $g_2$ are in the same conformal class (\emph{i.e.,}, whenever $g_1=\varphi g_2$ for some positive function $\varphi$), then $\mu_{g_1}$ and $\mu_{g_2}$ are in the same measure class, and the Radon-Nikodym derivative $d\mu_{g_1}/d\mu_{g_2}$ satisfies
\[
\frac{d\mu_{g_1}}{d\mu_{g_2}} = \left( \frac{g_1}{g_2}\right)^{\alpha}\ .
\]
We are not interested in conformal densities in such vast generality, but rather in the following specific context. We take $M=S_\infty$, and let $\mathcal{R}=\{g_x:\,x\in B^n\}$, where $g_0$ is the standard (euclidian) Riemannian metric on the sphere $S_\infty$, and for each $x\in B^n$ the Riemannian metric $g_x$ is obtained by transporting $g_0$ via any hyperbolic isometry mapping $0$ to $x$.
These metrics are all conformally equivalent.  
Thus, in the present context, we can think of a $\alpha$-conformal density on the sphere at infinity as an assigment $x\mapsto \mu_x$ from points on hyperbolic space to positive measures on $S_\infty$, all in the same measure class.
Shortening the notation to  $\mu_x=\mu_{g_x}$, we deduce after a simple calculation that 
\begin{equation}\label{radon-nikodym}
\frac{d\mu_{x_1}}{d\mu_{x_2}}(\xi) =\left(\frac{P(x_1,\xi)}{P(x_2,\xi)} \right)^{\alpha}\ ,
\end{equation}
for each pair of points $x_1,x_2\in B^n$ and all $\xi\in S^{n-1}\equiv S_\infty$, where 
\[
P(x,\xi) = \frac{1-|x|^2}{|x-\xi|^2}
\]
is the well-known \emph{Poisson kernel}. 

Given a non-elementary Kleinian group $\Gamma$, we are interested in conformal densities of the type just described  \emph{that are entirely supported in the limit set of $\Gamma$ and that are $\Gamma$-invariant}. More precisely, we want to know whether there exists an $\alpha$-conformal density $\mu=\{\mu_x:\,x\in B^n\}$ such that
\begin{enumerate}
\item[(1)] For each $x$, the measure $\mu_x$ has support in the limit set $\Lambda(\Gamma)$.
\item[(2)] For each pair of points $x_1,x_2\in B^n$, the measures $\mu_{x_1},\mu_{x_2}$ are mutually absolutely continuous, and the Radon-Nikodym derivative $d\mu_{x_1}/d\mu_{x_2}$ satisfies \eqref{radon-nikodym}.
\item[(3)] For all $x\in B^n$ and each $\gamma\in \Gamma$, we have $\gamma_*\mu_{x}=\mu_{\gamma x}$.  
\end{enumerate}

Given a $\Gamma$-invariant conformal density of dimension $\alpha$ in the sense just described, each of its associated measures $\mu_x$ is an \emph{$\alpha$-conformal measure} in the sense that 
\[
\mu_x(\gamma (E)) = \int_{E} |\gamma_x^{\,\prime} (\xi)|^{\alpha}\,d\mu_x(\xi)
\]
for each Borel set $E\subset S_\infty$ and each $\gamma\in \Gamma$ (\emph{cf.} the discussion on conformal measures for rational maps in \S\ref{sec:conf-meas-rat}).
The question as to whether such \emph{Patterson-Sullivan measures} exist is examined below. 

\subsubsection*{Patterson-Sullivan measures: construction}
 The \emph{Poincaré series} of the (non-elementary) Kleinian group $\Gamma$ is defined as 
\begin{equation}\label{poincareseries}
g_s(x,y) = \sum_{\gamma\in \Gamma} e^{-sd(x,\gamma y)}\ ,
\end{equation} 
where $x,y\in \mathbb{H}^n$, $d$ is the hyperbolic metric on $\mathbb{H}^n$, and $s>0$ is a real parameter. Whether the series \eqref{poincareseries} converges or not for a given value of $s$ is independent of which points $x,y$ one chooses. In order to state this more precisely, define the \emph{critical exponent}  of $\Gamma$ to be the number
\[
\delta(\Gamma) = \inf\{s>0:\,g_s(0,0)<\infty\}\ ,
\]
which turns out to be strictly positive when $\Gamma$ is non-elementary{\footnote{This was first proved by Beardon \cite{MR227402}.}}. Then it is a fact that the series \eqref{poincareseries} converges for all $s>\delta(\Gamma)$, and diverges for all $0<s<\delta(\Gamma)$. It is also not difficult to prove that $\delta(\Gamma)\leq n-1$. 
What happens when $s=\delta(\Gamma)$? Obviously, only one of two things: 
\begin{enumerate}
\item[(i)] if $g_{\delta(\Gamma)}(0,0)<\infty$, we say that $\Gamma$ is a group of \emph{convergence type}; 
\item[(ii)] if $g_{\delta(\Gamma)}(0,0)=\infty$, we say that $\Gamma$ is a group of \emph{divergence type}. 
\end{enumerate}

For the construction to follow, let us fix a point $y\in B^n$ once and for all (for example, we could take $y=0$). For each $x\in B^n$ and each $s> \delta(\Gamma)$, one constructs a positive Borel measure $\mu_{x,s}$ on the closure of $B^n$ as follows. The rough idea is to place a Dirac mass at each point of the $\Gamma$-orbit of $y$, with weights that depend on the hyperbolic distance between each such point and $x$ in a suitable way. Let us be more precise.

When the group $\Gamma$ is of \emph{divergence} type, one simply defines{\footnote{We denote by $\delta_z$ the Dirac probability measure concentrated at $z\in B^n$.}}
\[
\mu_{x,s} = \frac{1}{g_s(y,y)} \sum_{\gamma\in \Gamma} e^{-sd(x,\gamma y)}\delta_{\gamma x}
\]
With this definition, one can consider the weak limits of such measures when 
$s\searrow \delta(\Gamma)$. Since $g_s(y,y)\to \infty$ as $s\to \delta(\Gamma)$, the point masses are swept off to the sphere at infinity, and any weak limit will be a measure supported on the sphere (actually on the limit set). The existence of limits is guaranteed by a classical result in real analysis (namely, Helly's theorem). 

However, when the group is of \emph{convergence} type, the above will not work, because 
$g_s(y,y)$ remains bounded as $s\to \delta(\Gamma)$, and whatever limiting measure we get will still have an atom at each point in the $\Gamma$-orbit of $y$ (recall that the goal is to obtain measures supported on the limit set of $\Gamma$). To circumvent this problem, Dennis borrows an idea due to Patterson \cite{MR450547} (in the Fuchsian case, $n=2$) and introduces a \emph{mollifier}, called, not surprisingly, the \emph{Patterson auxiliary function}. This is a continuous non-decreasing function $h: \mathbb{R}^+\to  \mathbb{R}^+$ having the following properties:
\begin{enumerate}
\item[(1)] For each $\epsilon>0$ there exists $r_0>0$ such that $h(tr)\leq t^\epsilon h(r)$ for all $r>r_0$ and all $t>1$.
\item[(2)] The series 
\[
\sum_{\gamma\in \Gamma} e^{-s d(x, \gamma y)} h( e^{d(x,\gamma y)})
\]
converges for $s> \delta(\Gamma)$ and diverges for $s\leq \delta(\Gamma)$. 
\end{enumerate}
Using this function, one defines the \emph{modified Poincaré series}
\[
g_s^*(x,y) = \sum_{\gamma\in \Gamma} e^{-sd(x,\gamma y)} h( e^{d(x,\gamma y)})
\]
This now diverges when $s\searrow \delta(\Gamma)$. Thus, for each $x\in B^n$ and each $s> \delta(\Gamma)$ we may now consider the positive Borel measure 
$\mu_{x,s}$ defined by
\[
\mu_{x,s} = \frac{1}{g_s^*(y,y)} \sum_{\gamma\in \Gamma} e^{-sd(x,\gamma y)}h( e^{d(x,\gamma y)})\delta_{\gamma x}\ .
\]
As before, the resulting weak limits as $s\searrow \delta(\Gamma)$ are positive Borel measures on the sphere at infinity, and their supports are contained in the limit set $\Lambda(\Gamma)$. 

In either case, we have for each $x\in B^n$ a non-empty closed subset $M_x(\Gamma)\subset \mathcal{M}^+(S_\infty)$ of the space of all positive Borel measures on the sphere at infinity (endowed with the topology of weak convergence of measures). 
It is possible to prove that the $M_x(\Gamma)$'s are all homeomorphic (see for instance \cite[Th.~3.4.1]{MR1041575}). 

We are now ready to summarize some of the main results obtained by Dennis in \cite{MR556586}. That paper is very rich, and we can hardly do any justice to it in such a short exposition.

The first theorem generalizes results obtained by Patterson and Bowen in the Fuchsian case.

\begin{theorem}[Patterson-Sullivan Measures and Hausdorff Dimension, see \cite{MR556586}]
Let $\Gamma\subset \isom^+(\mathbb{H}^n)$ be a non-elementary  Kleinian group, and let $\delta=\delta(\Gamma)$ be its critical exponent. 
\begin{enumerate}
\item[(i)] There exists a $\delta$-conformal density $\mu=\{\mu_x:\,x\in B^n\}$ on the sphere at infinity which is $\Gamma$-invariant and satisfies $\mu_x\in M_x(\Gamma)$ for each $x$.
\item[(ii)] We have $\mathrm{dim}_H( \Lambda_c(\Gamma))\leq \delta$, \emph{i.e.,} the conical limit set of $\Gamma$ has Hausdorff dimension less than or equal to its critical exponent
\item[(iii)] If $\Gamma$ is convex co-compact, then the $\delta$-conformal density in (i) is unique up to a scalar multiple, and for each euclidian ball $B(\xi,r)$ centered at a point $\xi\in \Lambda_c(\Gamma)$, and each $x\in B^n$, we have 
$\mu_x(B(\xi,r)\cap \Lambda_c(\Gamma)) \asymp r^{\delta}$. In particular, 
$\mathrm{dim}_H( \Lambda_c(\Gamma)) = \delta$,\emph{i.e.,} the Hausdorff dimension of the conical limit set is \emph{equal} to $\delta$ in this case.
\end{enumerate}
\end{theorem}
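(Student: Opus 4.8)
The plan is to carry out the construction of Patterson--Sullivan measures sketched above and then verify the three assertions in turn.

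\emph{Assertion (i).} Fix $y=0$ and a sequence $s_k\searrow\delta$. The first step is a pair of a priori estimates: the triangle inequality $|d(x,\gamma y)-d(y,\gamma y)|\le d(x,y)$ together with the slow-variation property~(1) of the Patterson auxiliary function $h$ (vacuous in the divergence case, where $h\equiv 1$) yields constants $0<c(x)\le C(x)$ with $c(x)\,g_s^*(y,y)\le g_s^*(x,y)\le C(x)\,g_s^*(y,y)$ for all $s$ near $\delta$; hence the total masses $\mu_{x,s}(\overline{B^n})=g_s^*(x,y)/g_s^*(y,y)$ stay in a fixed compact subinterval of $(0,\infty)$. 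By weak-$*$ compactness (Helly's theorem) we pass to a subsequence with $\mu_{0,s_k}\to\mu_0$ weakly, where $\mu_0$ is a nonzero finite Borel measure on $\overline{B^n}$. Since $g_s^*(y,y)\to\infty$ as $s\searrow\delta$, the mass that $\mu_{x,s}$ puts on any compact subset of $B^n$ tends to $0$, so $\mu_0$ is carried by $S_\infty$; and as $\Gamma y$ can accumulate only on $\Lambda(\Gamma)$ we get $\operatorname{supp}\mu_0\subseteq\Lambda(\Gamma)$. Now set $d\mu_x(\xi)=P(x,\xi)^{\delta}\,d\mu_0(\xi)$ for every $x\in B^n$ (note $P(0,\xi)\equiv 1$, so $\mu_0$ is recovered). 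Then the Radon--Nikodym relation~\eqref{radon-nikodym} with exponent $\delta$ holds by construction; $\mu_x$ is finite, nonzero and supported in $\Lambda(\Gamma)$; and $\mu_x$ is again a weak limit of $\mu_{x,s_k}$, because the weight of $\mu_{x,s}$ at $\gamma y$ relative to that of $\mu_{y,s}$ is $e^{-s(d(x,\gamma y)-d(y,\gamma y))}$ times an $h$-factor, the first tending to $P(x,\xi)^{\delta}$ as $\gamma y\to\xi$ (the exponent converges to a Busemann cocycle and $s\to\delta$) and the second to $1$ by~(1). Finally, for each $s$, reindexing the sum over $\Gamma$ by $\gamma'\mapsto\gamma\gamma'$ and using $d(x,\gamma^{-1}\gamma''y)=d(\gamma x,\gamma''y)$ gives $\gamma_*\mu_{x,s}=\mu_{\gamma x,s}$ exactly; passing to the limit, $\gamma_*\mu_x=\mu_{\gamma x}$, i.e.\ $\mu=\{\mu_x\}$ is $\Gamma$-invariant. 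This proves (i).

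\emph{Assertion (ii).} Reformulating the definition of a conical limit point in terms of shadows, $\xi\in S_\infty$ belongs to $\Lambda_c(\Gamma)$ iff for some $R>0$ the point $\xi$ lies in the shadow $\mathcal O_R(\gamma y)$ --- the trace on $S_\infty$, seen from $y$, of the hyperbolic $R$-ball about $\gamma y$ --- for infinitely many $\gamma\in\Gamma$. Hence $\Lambda_c(\Gamma)=\bigcup_{R\in\N}\Lambda_c^{(R)}$, where $\Lambda_c^{(R)}=\bigcap_{N}\bigcup_{\,d(y,\gamma y)\ge N}\mathcal O_R(\gamma y)$, and a routine hyperbolic-geometry computation bounds the euclidean diameter of these caps by $\operatorname{diam}\mathcal O_R(\gamma y)\le C_R\,e^{-d(y,\gamma y)}$. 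Therefore, for every $s>\delta$ and every $R$,
\[
\mathcal H^{s}\!\big(\Lambda_c^{(R)}\big)\ \le\ \lim_{N\to\infty}\sum_{d(y,\gamma y)\ge N}\big(C_R\,e^{-d(y,\gamma y)}\big)^{s}\ =\ C_R^{\,s}\,\lim_{N\to\infty}\sum_{d(y,\gamma y)\ge N}e^{-s\,d(y,\gamma y)}\ =\ 0,
\]
the last series being the tail of the Poincar\'e series $g_s(y,y)$, which converges for $s>\delta(\Gamma)$ by definition of the critical exponent. Thus $\dim_H\Lambda_c^{(R)}\le\delta$ for every $R$, and $\dim_H\Lambda_c(\Gamma)\le\delta$.

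\emph{Assertion (iii).} Assume now that $\Gamma$ is convex co-compact, so that $\Lambda_c(\Gamma)=\Lambda(\Gamma)$, the convex core $C(\Lambda)/\Gamma$ is compact, and $\Gamma$ is of divergence type (see \cite{MR1041575}). The central tool is Sullivan's \emph{shadow lemma}: using the conformality of $\mu$ and the geometry of shadows, one shows that for $R$ large and all $\gamma\in\Gamma$, $\mu_y\big(\mathcal O_R(\gamma y)\big)\asymp e^{-\delta\,d(y,\gamma y)}$ with implied constants depending only on $R$ and $\Gamma$. For uniqueness: if $\mu'$ is another $\Gamma$-invariant $\delta$-conformal density supported on $\Lambda$, the shadow lemma applied to both $\mu$ and $\mu'$ shows they are comparable on the family $\{\mathcal O_R(\gamma y)\}$, which generates the Borel $\sigma$-algebra of $\Lambda$; hence $\mu'_0\ll\mu_0\ll\mu'_0$ and $\phi:=d\mu'_0/d\mu_0$ is a bounded $\Gamma$-invariant function. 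Since a group of divergence type acts conservatively and ergodically on its limit set (see \cite{MR1041575}), $\phi$ is a.e.\ constant, i.e.\ $\mu'=c\,\mu$. For the measure estimate: given $\xi\in\Lambda$ and small $r>0$, the point $p$ of the geodesic ray $[y,\xi)$ at distance $\log(1/r)$ from $y$ lies in $C(\Lambda)$; compactness of $C(\Lambda)/\Gamma$ provides $\gamma\in\Gamma$ with $d(p,\gamma y)\le D$ for a uniform $D$, whence $d(y,\gamma y)=\log(1/r)+O(1)$, and for a suitable fixed $R$ the cap $\mathcal O_R(\gamma y)$ is squeezed between two euclidean balls about $\xi$ of radii comparable to $r$. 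Combining this with the shadow lemma gives $\mu_y\big(B(\xi,r)\cap\Lambda\big)\asymp e^{-\delta\,d(y,\gamma y)}\asymp r^{\delta}$, and passing from $\mu_y$ to $\mu_x$ only multiplies by the bounded factor $P(x,\cdot)^{\delta}$; so $\mu_x\big(B(\xi,r)\cap\Lambda_c(\Gamma)\big)\asymp r^{\delta}$. Finally, this two-sided estimate together with the mass-distribution principle forces $0<\mathcal H^{\delta}(\Lambda)<\infty$, hence $\dim_H\Lambda_c(\Gamma)=\dim_H\Lambda(\Gamma)=\delta$, which with (ii) yields the stated equality.

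The step I expect to be the main obstacle is the shadow lemma used throughout (iii): its upper bound $\mu_y(\mathcal O_R(\gamma y))\lesssim e^{-\delta d(y,\gamma y)}$ is soft, following from conformality and finiteness of $\mu_y$, but the matching lower bound is delicate, since it amounts to controlling how concentrated a $\delta$-conformal measure on $\Lambda$ can be --- and this is precisely where non-elementarity (for the estimate to be meaningful) and convex co-compactness/divergence type (for the required ergodicity, and for every boundary point to be conical with uniform constants) enter. Translating the hyperbolic data --- geodesics and shadows of orbit balls --- into euclidean metric balls on $S_\infty$ is routine but must be handled with some care throughout.
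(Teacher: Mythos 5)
Your proposal is correct and follows essentially the same route as the construction sketched in the paper (and carried out in Sullivan's original article and in Nicholls' book): the Patterson-weighted Poincar\'e series with Helly's theorem for (i), the shadow-covering estimate against the tail of the Poincar\'e series for (ii), and Sullivan's shadow lemma combined with ergodicity and the mass-distribution principle for (iii). Since the survey only sketches the construction of the measures and defers the remaining details to the cited sources, the details you supply (the Poisson-kernel/Busemann limit of the weights, exact $\Gamma$-equivariance of the approximating measures, and uniform conicality from cocompactness of the convex core) are exactly the standard ones and raise no issues.
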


The last item in the above statement is a very elegant result which generalizes an equally elegant result for Fuchsian groups due to Bowen \cite{MR556580}.


Another striking result obtained by Dennis in \cite{MR556586} states that the \emph{total-mass function} of a $\Gamma$-invariant conformal density in dimension $\delta(\Gamma)$ is an eigenfunction of the \emph{hyperbolic Laplacian}. Let us state this result a bit more precisely, explaining the meaning of these terms. The hyperbolic Laplacian in $\mathbb{H}^n\equiv B^n$ (written in generalized polar coordinates) is the second-order differential operator 
\[
\Delta_h\;=\; \frac{(1-r^2)^2}{4}\left[\Delta + \frac{2(n-2)r}{1-r^2} \frac{\partial}{\partial r}  \right]\ ,
\]
where $\Delta$ is the standard (euclidian) Laplacian -- see for instance \cite[p.~56]{MR725161}. 
If $\mu=\{\mu_x:\,x\in B^n\}$ is a $\Gamma$-invariant conformal density in dimension $\delta(\Gamma)$, its \emph{total-mass function} is the function $\varphi: \mathbb{H}^n\to \mathbb{R}$ given by 
\[
\varphi(x) = \int_{\partial B^n} d\mu_x(\xi) = \int_{S_\infty} d\mu_x(\xi)\ .
\]

\begin{theorem}
The total-mass function $\varphi$ is an eigenfunction of the hyperbolic Laplacian, \emph{i.e.,} it satisfies
\[
\Delta_h\varphi = \lambda(\Gamma)\varphi \ ,
\]
with eigenvalue $\lambda(\Gamma)=\delta(\Gamma)\left( 1+ \delta(\Gamma) -n  \right)$.
\end{theorem}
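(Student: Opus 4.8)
The plan is to compute $\Delta_h$ applied to the total-mass function by differentiating under the integral sign, reducing everything to a single computation: that each Poisson kernel power $\xi\mapsto P(x,\xi)^{\delta}$ is itself an eigenfunction of $\Delta_h$ with the claimed eigenvalue. Concretely, write $\varphi(x)=\int_{S_\infty}d\mu_x(\xi)$ and use the Radon--Nikodym formula \eqref{radon-nikodym}, which lets us express $d\mu_x(\xi)=P(x,\xi)^{\delta}\,d\mu_0(\xi)$ relative to the fixed base measure $\mu_0$ at the origin. Thus $\varphi(x)=\int_{S_\infty}P(x,\xi)^{\delta}\,d\mu_0(\xi)$, and since $\mu_0$ is a finite Borel measure and $P(\cdot,\xi)^{\delta}$ is smooth in $x$ on the open ball with locally uniformly bounded derivatives, I may interchange $\Delta_h$ (a second-order operator acting in $x$) with the integral.

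The crux, then, is the pointwise identity
\[
\Delta_h\big(P(x,\xi)^{\delta}\big)\;=\;\delta\big(1+\delta-n\big)\,P(x,\xi)^{\delta}
\]
for every fixed $\xi\in S^{n-1}$. First I would reduce to the case $\xi=e_1$ (or any fixed boundary point) by $\Gamma$-invariance, or rather by the fact that $\Delta_h$ commutes with the isometry group of $\mathbb{H}^n$ and the family $\{P(x,\xi)^{\delta}\}$ is permuted by isometries with the appropriate cocycle; so it suffices to verify the eigenvalue equation for one boundary point. One clean way is to pass to the upper half-space model, where for the boundary point at infinity the Poisson kernel is simply $x\mapsto x_n$ (the last coordinate), and the hyperbolic Laplacian takes the familiar form $\Delta_h = x_n^2\,\Delta - (n-2)\,x_n\,\partial_{x_n}$; then $\Delta_h(x_n^{\delta}) = x_n^2\cdot\delta(\delta-1)x_n^{\delta-2} - (n-2)x_n\cdot\delta x_n^{\delta-1} = \big(\delta(\delta-1)-(n-2)\delta\big)x_n^{\delta} = \delta(\delta+1-n)x_n^{\delta}$, which is exactly $\lambda(\Gamma)$. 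Alternatively, staying in the ball model, one substitutes $P(x,\xi)^{\delta}$ into the given polar-coordinate expression for $\Delta_h$ and grinds through the radial and spherical derivatives; this works but is messier, which is why the half-space computation is the preferred route.

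The main obstacle is really just bookkeeping in two places. First, justifying the differentiation under the integral sign: one must check that near any fixed $x_0\in B^n$ the functions $x\mapsto P(x,\xi)^{\delta}$ and their first and second $x$-derivatives are dominated by a $\mu_0$-integrable function uniformly in $\xi$; since $|x-\xi|$ stays bounded away from $0$ for $x$ in a small neighborhood of $x_0$ and $\xi\in S^{n-1}$, this domination is automatic and $\mu_0(S_\infty)<\infty$ finishes it. Second, one should make sure the eigenvalue identity is stated for the same normalization of $\Delta_h$ used in the excerpt; a careful check of the conversion between the ball-model polar form and the half-space form $x_n^2\Delta-(n-2)x_n\partial_{x_n}$ confirms they agree (both are the Laplace--Beltrami operator of the metric of constant curvature $-1$), so the constant comes out as $\delta(\Gamma)(1+\delta(\Gamma)-n)$. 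Everything else — linearity, moving $\Delta_h$ inside the integral, and assembling $\Delta_h\varphi = \int_{S_\infty}\Delta_h\big(P(x,\xi)^{\delta}\big)\,d\mu_0(\xi) = \lambda(\Gamma)\int_{S_\infty}P(x,\xi)^{\delta}\,d\mu_0(\xi) = \lambda(\Gamma)\varphi(x)$ — is routine once the eigenfunction property of the Poisson kernel power is in hand.
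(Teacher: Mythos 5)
Your proposal is correct and follows essentially the same route the paper indicates: the whole theorem rests on the identity $\Delta_h\bigl(P(x,\xi)^\alpha\bigr)=\alpha(\alpha-n+1)P(x,\xi)^\alpha$, which is exactly the "direct calculation" the paper cites, combined with writing $d\mu_x=P(x,\xi)^{\delta}\,d\mu_0$ (using $P(0,\xi)=1$) and differentiating under the integral sign. Your half-space verification of the Poisson-kernel eigenvalue and the domination argument for interchanging $\Delta_h$ with the integral are correct and simply supply the details the survey leaves to the reader.
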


This statement is perhaps made more plausible if one takes into account  that, for each $\alpha$, we have
\[
\Delta_h\left( P(x,\xi)^\alpha \right) = \alpha(\alpha -n+1) P(x,\xi)^\alpha
\]
a fact that can easily be checked by direct calculation.

Finally, as Dennis explains in \cite{MR556586}, a $\Gamma$-invariant conformal density $\mu$ gives rise to a measure $m$ on the unit tangent bundle of the quotient hyperbolic manifold $\mathbb{H}^n/\Gamma$ which is \emph{invariant under the geodesic flow}. In addition, the normalized probability measures $\varphi(x)^{-1}\mu_x$ can be used to generate a Markovian stochastic process on $\mathbb{H}^n/\Gamma$ akin to Brownian motion. The beautiful synthesis obtained by Dennis in \cite{MR556586} relates the recurrent properties of this Markovian process with the ergodic properties of the geodesic flow on the quotient manifold, and can be informally stated as follows.

\begin{theorem}[Ergodic Measures for the Geodesic Flow, see \cite{MR556586}]
Let $\Gamma\subset \isom^+(\mathbb{H}^n)$ be a non-elementary  Kleinian group, and let $\delta=\delta(\Gamma)$ be its critical exponent. Also, let $\mu$ be a $\Gamma$-invariant $\delta$-conformal density. 
Consider the following assertions:
\begin{enumerate}
\item[(1)] The conical limit set $\Lambda_c(\Gamma)$ has positive $\mu$-measure.
\item[(2)] The action of $\Gamma$ on $S_\infty \times S_\infty$ minus the diagonal is ergodic with respect to $\mu\times \mu$.
\item[(3)] The geodesic flow on the unit tangent bundle to $\mathbb{H}^n/\Gamma$ is ergodic with respect to $m_\mu$.
\item[(4)] The group $\Gamma$ is of divergence type.
\item[(5)] The Markov process on $\mathbb{H}^n/\Gamma$ is recurrent.
\end{enumerate}
Then (1), (2) and (3) are equivalent, and they imply (4). If in addition $2\delta>n-1$, then (4) implies (5), and (5) implies all the others (\emph{i.e.,} all five assertions are equivalent in this case). 
\end{theorem}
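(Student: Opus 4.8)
The plan is to route all five statements through the ergodic theory of the $\Gamma$-action on the boundary and the Hopf decomposition of the geodesic flow, and to treat the two Markov-process statements (4)--(5) separately by potential theory, since that is where the hypothesis $2\delta>n-1$ genuinely enters.

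First I would fix the standard identification $T^1\mathbb{H}^n\cong\bigl((S_\infty\times S_\infty)\setminus\Delta\bigr)\times\mathbb{R}$, under which a unit tangent vector corresponds to the ordered pair of endpoints of its geodesic together with a parameter along that geodesic, and under which the geodesic flow is translation in the $\mathbb{R}$-factor. From the Radon--Nikodym formula \eqref{radon-nikodym} with $\alpha=\delta$ one checks that $|\xi-\eta|^{-2\delta}\,d\mu(\xi)\,d\mu(\eta)\,dt$ is invariant both under $\Gamma$ and under the geodesic flow on $T^1\mathbb{H}^n$, and it descends to $m_\mu$ on $T^1(\mathbb{H}^n/\Gamma)$. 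A measurable geodesic-flow-invariant function on $T^1(\mathbb{H}^n/\Gamma)$ lifts to a $\Gamma$-invariant function on $T^1\mathbb{H}^n$ that does not depend on the $\mathbb{R}$-coordinate, hence to a $\Gamma$-invariant function on $(S_\infty\times S_\infty)\setminus\Delta$, and conversely; this gives $(2)\Leftrightarrow(3)$ with no extra hypothesis.

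Next I would identify $(1)$ with conservativity. Unwinding the definition, $\xi\in\Lambda_c(\Gamma)$ exactly when a geodesic ray landing at $\xi$ projects to a ray in $\mathbb{H}^n/\Gamma$ that returns infinitely often to a fixed compact set; hence, up to null sets, the conservative part of $(T^1(\mathbb{H}^n/\Gamma),m_\mu)$ is carried by the pairs of conical endpoints, and $\Lambda_c(\Gamma)$ has positive $\mu$-measure iff that conservative part has positive $m_\mu$-measure. Two ingredients then close the loop among $(1),(2),(3)$: a Hopf-type argument — in which the two boundary coordinates play the roles of the stable and unstable foliations, so that a flow-invariant function is a.e. simultaneously a function of $\xi$ alone and of $\eta$ alone — shows that the $\Gamma$-action on $(S_\infty\times S_\infty)\setminus\Delta$ is ergodic on its conservative part; and a zero--one law for the Patterson--Sullivan measure (the $\Gamma$-action on $(\Lambda(\Gamma),\mu)$ is ergodic) upgrades ``positive measure'' to ``full measure'', so the dissipative part is $m_\mu$-null and the whole system is ergodic. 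The point to watch here is precisely this conservative/dissipative dichotomy and the zero--one law, because $m_\mu$ may be an infinite measure: one argues directly with the $\limsup$ sets $\bigcap_k\bigcup_{d(x,\gamma y)>k}\mathcal{O}_R(\gamma y)$, where $\mathcal{O}_R(\gamma y)$ is the shadow on $S_\infty$ of the ball of radius $R$ about $\gamma y$, rather than invoking Poincaré recurrence. The implication $(1),(2),(3)\Rightarrow(4)$ is then the elementary half of a Borel--Cantelli estimate: by the shadow lemma $\mu_x(\mathcal{O}_R(\gamma y))$ is bounded by a constant (depending on $R$) times $e^{-\delta d(x,\gamma y)}$, so if $\Gamma$ is of convergence type then $\sum_\gamma\mu_x(\mathcal{O}_R(\gamma y))<\infty$ for every $R$, whence $\mu(\Lambda_c(\Gamma))=0$, contradicting $(1)$.

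It remains to treat $(4)$ and $(5)$ under the hypothesis $2\delta>n-1$, which says exactly that $\delta$ is the \emph{larger} of the two roots $t$ of $\lambda(\Gamma)=t(1+t-n)$; this is the regime in which $P(\cdot,\xi)^{\delta}$ is the dominant positive $\lambda(\Gamma)$-eigenfunction and the $\lambda(\Gamma)$-Green function of $\mathbb{H}^n$ is comparable to $e^{-\delta d(\cdot,\cdot)}$. The normalized measures $\varphi(x)^{-1}\mu_x$ are the exit distributions of the Doob $\varphi$-transform of the diffusion generated by $\Delta_h-\lambda(\Gamma)$; summing over the orbit, the expected number of visits of this $\Gamma$-process to a fundamental domain is, up to bounded factors, $\varphi(x)^{-1}\sum_\gamma e^{-\delta d(x,\gamma y)}\varphi(\gamma y)\asymp\sum_\gamma e^{-\delta d(x,\gamma y)}$, so divergence type forces recurrence, giving $(4)\Rightarrow(5)$. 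For $(5)\Rightarrow(1)$ one uses that the $\varphi$-process tracks geodesics: almost every path converges to a boundary point, distributed according to $\varphi(x)^{-1}\mu_x$, which lies in the measure class of $\mu$; recurrence of the process to compact subsets of $\mathbb{H}^n/\Gamma$ then forces $\mu$-almost every boundary point to be a conical limit point, which is $(1)$ in its strong form. I expect this last block to be the main obstacle — setting up the $\varphi$-transform and the Green-function comparison that genuinely needs $2\delta>n-1$, and making ``paths track geodesics'' precise enough to transfer recurrence to conicality — whereas the equivalences $(1)\Leftrightarrow(2)\Leftrightarrow(3)$ and the passage to $(4)$ are comparatively routine.
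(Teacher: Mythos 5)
Your outline is essentially the argument of the source this survey is quoting: the paper itself states this theorem only as an informal summary of \cite{MR556586} (with details in Nicholls' book \cite{MR1041575}) and gives no proof, and your route --- the coordinates $\bigl((S_\infty\times S_\infty)\setminus\Delta\bigr)\times\mathbb{R}$ with the measure $|\xi-\eta|^{-2\delta}\,d\mu\,d\mu\,dt$ for $(2)\Leftrightarrow(3)$, the shadow lemma, density-point/zero--one law and Hopf-type argument for $(1)\Leftrightarrow(2)\Leftrightarrow(3)\Rightarrow(4)$, and the eigenfunction ($\varphi$-transform) Markov process with the Green-function comparison to $\sum_\gamma e^{-\delta d(x,\gamma y)}$ for $(4)\Rightarrow(5)\Rightarrow(1)$ when $2\delta>n-1$ --- is exactly Sullivan's original strategy. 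So the proposal is correct in approach and consistent with what the paper cites; no genuine gap to report.
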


\subsection{Further results}

Dennis has written a number of other very interesting papers on the geometry and dynamics of Kleinian groups. For example, in \cite{MR766265} he extended some of the above results from the convex co-compact case to the case of \emph{geometrically finite groups}  -- one of the main consequences being the fact that the Hausdorff dimension of the limit set of a geometrically finite group is equal to the critical exponent of the group. In \cite{MR688349}, he investigated the excursion of geodesics on hyperbolic surfaces, relating their behaviour near cusps with classical results in Diophantine approximations.

\section{Holomorphic dynamics}

\subsection{The reemergence of holomorphic dynamics in Paris in the 1980's} 
Right from the start the city of Paris has played a key role in
the study of holomorphic dynamical systems. In the 1920's Julia and Fatou 
developed many results on the theory of iterations of rational maps  $f\colon \bar \C \to \bar \C$.
They introduced what is now called the {\em Fatou set}, 
the set of points in $\bar \C$ which have a neighbourhood $N$ on which the iterates $f^n|N$, $n\in \mathbb N$, 
form a normal family, i.e. are equicontinuous. Similarly,  the Julia set $J(f)$ is defined as the complement of $F(f)$.
At the time the main tool Julia and Fatou had at their disposal was the  Montel theorem, which states 
that a family of maps $f^n|N$, $n\in \mathbb N$ defined on an open set $N\subset \bar \C$ 
is normal if it has the property  that there are three points in $\bar \C$ which are omitted in $\cup_n f^n(N)$. 
Among the many results they showed is that the Julia set is the closure of the set of
repelling periodic points. They also developed a theory on the local dynamics near 
periodic points. 

In the early 1980's there was a huge revival of this theory in Paris, with main 
drivers being  Dennis Sullivan, Adrien Douady, Hamal Hubbard and Michael Herman. 
One of the main reasons for this resurgence was that it became increasingly 
clear that there were {\em new powerful tools} available, namely  the {\em Measurable Riemann Mapping Theorem}  (MRMT)
and the notion of {\em quasiconformal maps}. These 
  would make it possible to complete and go much beyond the theory initiated by Julia and Fatou in the 1920's. 

There are quite a few equivalent definitions of the notion of a  {\em quasiconformal map}, 
and all reflect that such maps are generalisations  of conformal maps. 
One of these definitions is that  $h\colon U \to V$ is a quasiconformal map if it is an orientation preserving homeomorphism
between two domains $U,V$ on $\C$ so that the Beltrami equation 
\[
\dfrac{\partial h}{\partial {\bar z}} = \mu(z) \dfrac{\partial h}{\partial z}
\]
makes sense and so that $\mu$ is Lebesgue measurable essentially bounded, \emph{i.e.,} satisfies 
$\|\mu\|_\infty<1$. This means that at each point $z\in U$ at which $h$ is differentiable, the derivative $Dh(z)$ maps circles to ellipses with uniformly bounded eccentricity. 

The MRMT  implies that each such $\mu$ is associated to a quasiconformal map $h_\mu$ and, crucially, that $h_\mu$ depends analytically on $\mu$. 


Dennis was amongst the first to realise the power of the MRMT in the field
of holomorphic dynamics, partly because he had previously used it very successfully
in the study of Kleinian groups. 
Parallel to Douady and Hubbard's seminal  Orsay notes \cite{MR651802,MR762431,MR812271, MR816367},   which are a tour the force
through the entire subject of holomorphic dynamics,  Dennis proved the following remarkable theorem:

\begin{theorem}
[No-wandering-domains Theorem, see \cite{MR819553}] \label{thm:nowandering} 
Let $f$ be a rational map on the Riemann sphere.  Then each component of the Fatou set is 
eventually periodic. 
\end{theorem}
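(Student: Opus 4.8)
The plan is to argue by contradiction, following Sullivan. Suppose $f$ has a wandering Fatou component $U$, meaning the forward images $U_n:=f^n(U)$ are pairwise distinct; since they are Fatou components, they are then pairwise disjoint. Because $f$ has only finitely many critical points, only finitely many of the $U_n$ can contain one, so after replacing $U$ by a sufficiently late forward image I may assume that no $U_n$, $n\ge 0$, contains a critical point of $f$, so that $f$ restricts to a local homeomorphism on each $U_n$. Write $O$ for the grand orbit of $U$, that is, the union of all Fatou components $V$ with $f^m(V)=f^k(U)$ for some $m,k\ge 0$; it is completely invariant and, by hypothesis, decomposes into countably many components. The aim is to manufacture, using the Measurable Riemann Mapping Theorem, an infinite-dimensional family of quasiconformal deformations of $f$ that are again rational maps of the same degree $d=\deg f$ --- contradicting the fact that the space $\mathrm{Rat}_d$ of degree-$d$ rational maps is finite-dimensional.

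The construction I would carry out produces, for every integer $N$, an $N$-real-parameter family $t\mapsto\mu_t$ (with $t$ in a small ball $B\subset\R^N$) of Beltrami coefficients on $\widehat{\C}$ that are $f$-invariant ($f^*\mu_t=\mu_t$ a.e.), supported on $O$, have $\|\mu_t\|_\infty<1$, depend real-analytically on $t$, and, for $t\ne t'$, induce genuinely different conformal structures on $U$. Roughly: one fixes $N$ smooth Beltrami differentials with pairwise disjoint compact supports inside $U$, takes their linear combinations $\nu_t$, and transports $\nu_t$ around $O$ so as to make it $f$-invariant --- pulling back along iterates of $f$ (a holomorphic map preserves the sup-norm of a Beltrami coefficient) and propagating it along the forward orbit $U_1,U_2,\dots$. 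The point that $U$ is visited only once in each forward orbit inside $O$ --- precisely where the wandering hypothesis enters --- is what leaves enough room, so that essentially the whole (infinite-dimensional) Teichm\"uller space of $U$ is realized by $f$-invariant structures. Given $\mu_t$, the MRMT provides a quasiconformal $\phi_t=h_{\mu_t}$ with complex dilatation $\mu_t$, normalized to fix $0,1,\infty$ and depending analytically on $t$. Since $\mu_t$ is $f$-invariant and $f$ is holomorphic, the conjugate $f_t:=\phi_t\circ f\circ\phi_t^{-1}$ has vanishing complex dilatation, hence is holomorphic, hence rational, and it has degree $d$ because it is topologically conjugate to $f$. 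Thus $F\colon t\mapsto f_t$ is a real-analytic map $B\to\mathrm{Rat}_d$.

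Now comes the dimension count. Since $\mathrm{Rat}_d$ is an open subset of $\mathbb P^{2d+1}$, it is a manifold of real dimension $4d+2$. Choosing $N>4d+2$, a closed $N$-ball cannot be carried injectively into a manifold of dimension $4d+2<N$, so $F$ fails to be injective: there exist $t\ne t'$ in $B$ with $f_t=f_{t'}$. Then $\psi:=\phi_{t'}^{-1}\circ\phi_t$ is a quasiconformal homeomorphism that commutes with $f$, is isotopic to the identity (join $\mu_t,\mu_{t'}$ to $0$ inside $B$), and whose complex dilatation is nonzero on $U$ (where $\mu_t\ne\mu_{t'}$). One shows such a $\psi$ cannot exist: being isotopic to the identity and commuting with $f$, it must preserve $U$, so $\psi|_U$ is a quasiconformal automorphism of $U$ isotopic to the identity carrying the structure $\mu_{t'}|_U$ to $\mu_t|_U$; hence these two structures coincide as points of $\mathrm{Teich}(U)$, contradicting the injectivity built into the family. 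Therefore no wandering component exists, and every Fatou component of $f$ is eventually periodic.

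The steps I regard as routine are the construction of $\phi_t$ and $f_t$, the analytic dependence, and the dimension count --- bookkeeping with the MRMT and the elementary topology of Fatou components. The real content sits in two places, both tied to wandering: first, that $O$ genuinely carries an infinite-dimensional family of pairwise-inequivalent $f$-invariant conformal structures (which works because $U$ appears only once per forward orbit, so no compatibility constraint is forced even when $f$ maps some $U_n$ to $U_{n+1}$ as a nontrivial covering); and second --- the step I expect to be the main obstacle --- the final injectivity claim, namely that the comparison map $\psi$ must act trivially on $U$, equivalently that distinct structures on $U$ spread to distinct points of $\mathrm{Rat}_d$. This is an injectivity statement proved by a pullback / self-conjugacy argument, and it is the technical heart of Sullivan's theorem.
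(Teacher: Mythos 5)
Your overall route is exactly the one the paper sketches (and the one Sullivan actually took): spread an $N$-parameter family of Beltrami coefficients from the wandering component $U$ over its grand orbit to get $f$-invariant structures, integrate them with the MRMT, conjugate to get rational maps $f_t$, and contradict the finite dimensionality of $\mathrm{Rat}_d$ (real dimension $4d+2$). The parts you call routine are indeed routine (one small caveat: $f\colon U_n\to U_{n+1}$ is a proper map that can be a nontrivial covering even after you discard critical points, so "transporting $\nu_t$ around the grand orbit" needs the usual care). The problem is that the decisive step -- that $t\mapsto f_t$ cannot collapse your family -- is asserted rather than proved, and the argument you sketch for it does not work as written.

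Concretely: (a) on $\overline{\C}$ every orientation-preserving homeomorphism is isotopic to the identity, so "$\psi$ is isotopic to the identity, hence preserves $U$" carries no information; the standard repair is that, by invariance of domain, $t\mapsto f_t$ fails to be injective on every small ball, so one may take $t\neq t'$ arbitrarily close, making $\psi=\phi_{t'}^{-1}\circ\phi_t$ uniformly close to the identity, whence $\psi(U)\cap U\neq\emptyset$ and therefore $\psi(U)=U$. (b) More seriously, $\psi|_U$ is merely \emph{some} quasiconformal automorphism of $U$ compatible with the global dynamics; it need not be isotopic to the identity rel the ideal boundary, so all you may conclude is that $\mu_t|_U$ and $\mu_{t'}|_U$ are equivalent modulo the group of all such automorphisms -- not that they coincide in $\mathrm{Teich}(U)$. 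Saying this "contradicts the injectivity built into the family" is therefore circular: the family must be \emph{constructed} so as to remain injective modulo this a priori large group of self-conjugacies, and that is precisely the technical heart of Sullivan's proof. It is where his case analysis on the connectivity of the wandering domain (simply connected, finitely connected, infinitely connected) enters, and, in the simply connected case, compactly supported Beltrami differentials with disjoint supports are \emph{not} obviously inequivalent in the relevant sense: their effect is visible only in the boundary values of the straightening maps on the ideal boundary circle, and one must show that the boundary homeomorphisms induced by self-conjugacies of $f$ form too small a family to absorb an infinite-dimensional space of deformations. You correctly flag this as the main obstacle, but as written your proof stops exactly where the paper's two-sentence sketch does.
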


More precisely, for each component  $U$ of $F(f)$ there exists $m\ge 0$ so that $V=f^m(U)$ is periodic, 
i.e. there exists $p\ge 1$ so that $f^p(V)=V$.  Moreover, each periodic component $V$ of $F(f)$ can be classified. 
Indeed, it  contains one of the following:
\begin{enumerate}
\item  a periodic point of eigenvalue $\lambda=0$ (called {\em superattractive}), 
\item a periodic point of eigenvalue $0<|\lambda|<1$ (called {\em attractive}), or
\item  $\partial V$ contains a periodic point whose multiplier is a root of unity ({\em rational indifferent}), 
\item $f^p$ is analytically conjugate to an irrational rotation on $V$
and either 
\begin{enumerate} 
\item \quad  $V$ is a simply-connected {\em Siegel disc} or 
\item \quad a doubly-connected {\em Herman ring}. 
\end{enumerate} 
\end{enumerate} 

To prove the first part of the statement one needs to 
show that if $f$ is a rational map, then no component of its Fatou set
is a wandering domain, i.e.  a domain so that all its iterates are pairwise disjoint.
In a nutshell Dennis' proof of this theorem goes as follows:
suppose by contradiction that $f$ has a wandering domain $W$.  
Then this makes it possible to construct
an infinite dimensional space of deformations of $f$, 
contradicting that the space of rational maps of a given degree
is finite dimensional.  

That the space of rational maps is finite dimensional is crucial: 
shortly after the preprint version of \cite{MR819553} appeared,  Baker constructed 
entire functions $f\colon \C\to \C$ which do have wandering domains. 
Sullivan's no wandering theorem has also been extended to the setting
of entire maps (and similar spaces) with a finite number of singular values, see
for example   \cite{MR1196102} and \cite{MR857196}.  A very elegant proof of the no wandering domains theorem due to McMullen -- which circumvents the use of the MRMT and uses an infinitesimal deformations argument more in line with Ahlfors' original proof of his finiteness theorem -- can be found in \cite[p.~90]{mcmullennotes}. 

Unfortunately, as there is no corresponding MRMT in the real one-dimensional case,  the analogous theory in the real one-dimensional case
requires a careful combinatorial analysis together with an understanding 
of the non-linearity of the map. For circle diffeomorphisms this goes back
to Denjoy in the 1930's and from this paper Dennis learned the {\em smallest
interval argument}:  Assume that $W$ is a maximal wandering interval, i.e. that $W$ 
is not contained in a larger wandering interval. 
Then for each $n\ge 3$ consider the smallest, say $f^i(W)$,   amongst the 
collection of disjoint intervals $W,\dots,f^n(W)$. Then $f^i(W)$
has neighbours on each side which are larger (or empty space in the case of an interval map).  So $f^i(W)$ 
is well-inside  the convex hull $W_i'$ of the two neighbours. Using the 
way $W_i'$ is chosen, the interval $W_i'$ can be pulled back to an interval $W'_0\supset W$ 
so that the pullbacks $W_0',\dots,W_i'$ are essentially disjoint. 
This disjointness and the fact that $f$ is a $C^2$ diffeomorphism
implies that $W$ is $\delta$-well-inside $W_0'$ where $\delta$ does not depend on $n$. 
Using the maximality of $W$ this gives a contradiction. That  $f^i(W)$ 
is well-inside  the convex hull $W_i'$ is often called {\em Koebe space} and is a property 
that is often used both in real and holomorphic dynamics. 

The terminology {\em Koebe space} comes from the Koebe Lemma in complex analysis, which states
that for each $\delta>0$ there exists $K>0$ so that when $U_0\subset U$ are topological discs  
and $\phi\colon U\to \C$ is a univalent map and the modulus of $U\setminus U_0$
is at least $\delta$ then 
$$\frac{|D\phi(z)|}{|D\phi(z')|}\le K\mbox{ for all }z,z'\in U_0.$$ 
The power of this Lemma is that $K$ does not depend on $\phi$. 
It turns out that  in the real case there is an analogous result: 
for each $\delta>0$ there exists $K>0$ so that when $I_0\subset I$ are intervals
and  $g\colon I\to \R$ is a diffeomorphism so that  $Sg\ge 0$
then 
$$\frac{|Dg(z)|}{|Dg(z')|}\le K\mbox{ for all }z,z'\in I_0.$$
In applications, $g$ is usually the inverse of a diffeomorphic branch of an interval map $f^n$
where $f$ is assumed to have negative Schwarzian $Sf<0$.  The reason this is useful is that 
the Schwarzian property has the property that $Sf<0$ implies $Sf^n<0$
and that $Sf^n<0$ implies $Sf^{-n}>0$ (on diffeomorphic branches). Furthermore, 
if $f$ is a real polynomial with only real critical points then $Sf<0$. 
This observation that the negative Schwarzian could 
be used to bound the number of periodic attractors of an interval map 
was first made by Singer, see \cite{MR494306}
but also appeared at around the same time in for example Herman's work \cite{MR538680}. 
A version of the Koebe Lemma in this setting was proved for the first time in \cite{MR654898}. 
That the Schwarzian derivative is related the distortion of cross-ratio
was already known by E. Cartan in the 1930's, see the discussion in \cite[Sections IV.1 and IV.2]{MR1239171}. 
As the use of the above distortion estimate is so widespread, one often refers to the {\em Koebe Principle}
and the assumption on the domains $U_0\subset U$ (resp. $I_0\subset I$) as {\em Koebe space}. 

 For interval maps and critical circle maps, the presence
of critical points implies that one cannot control the non-linearity of the map 
and its iterates. Instead,  it turns out that it is enough (i) to consider the 
{\em cross-ratio distortion} of a triple of adjacent intervals under iterates, (ii) assume that  
the map has some local symmetry around the critical points (e.g. the maps
are non-flat at the critical points) and (iii) a more elaborate combinatorial analysis
of orbits of wandering intervals.  
This was done by Guckenheimer, Yoccoz, Lyubich, Block, de Melo, van Strien, Martens in various generalities. For a history and a full analogue
of  Theorem~\ref{thm:nowandering}  see  \cite{MR1161268, MR1239171}. 
Probably the most elegant way of proving 
 absence of wandering intervals in this setting can be found in \cite{MR2083467}. 
 Interestingly, it was Dennis who emphasised and insisted that the right smoothness class
 for (i) is $C^{1+Zygmund}$, whereas the earlier results required that the map 
 was $C^3$ and even assumed that the  map has negative Schwarzian. See \cite{MR1209848}, \cite{MR1184622} or \cite{MR1239171}
 for the definition of   the classes $C^{1+Zygmund}$ and $C^{1+zygmund}$.  The analogue
 of Sullivan's no-wandering Theorem~\ref{thm:nowandering} is:

\begin{theorem}[See \cite{MR1161268, MR1239171}]  Assume that $f$ is an interval map which is $C^{1+Zygmund}$ and 
has non-flat critical points.  Then $f$ has
no wandering interval. Moreover, if $f$ is a $C^{2+zygmund}$ map with non-flat critical points, then there
exist $\kappa>1$ and $n_0\in \N$ such that 
$$|Df^n(p)|>\kappa$$
for every periodic point $p$ of $f$ of period $n\ge n_0$.
\end{theorem}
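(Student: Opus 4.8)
The plan is to prove the two assertions in turn, extracting the derivative estimate from the absence of wandering intervals together with the sharper distortion control afforded by the extra $C^{2+zygmund}$ regularity. The engine in both parts is the distortion of the \emph{cross-ratio} of a triple of adjacent intervals: for $M\subset T$ with $T\setminus M$ consisting of intervals $L$ and $R$, put $\mathbf{C}(T,M)=|T|\,|M|\,/\,(|L\cup M|\,|R\cup M|)\in(0,1)$, and note that the ratio $\mathbf{C}(f^{k}(T),f^{k}(M))/\mathbf{C}(T,M)$ is a second-difference quantity in which the first-order terms cancel, so it is governed by the Zygmund seminorm of $Df$ --- which is precisely why $C^{1+Zygmund}$ is the natural class, replacing the classical hypotheses $Sf<0$ or $f\in C^{3}$. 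Two facts I would record at the outset: (a) along any chain of pullbacks $T_0\supset M_0\xrightarrow{\,f\,}T_1\supset M_1\xrightarrow{\,f\,}\cdots\xrightarrow{\,f\,}T_N\supset M_N$ in which the $T_j$ form a family of uniformly bounded multiplicity, one has $\mathbf{C}(T_0,M_0)\ge c\,\mathbf{C}(T_N,M_N)$ with $c>0$ depending only on $f$ and the multiplicity; and (b) if in addition $f$ is $C^{2+zygmund}$ and the $T_j$ are small, then $c$ can be taken arbitrarily close to $1$. The Koebe principle quoted above is a packaged consequence of (a).

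\medskip

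\emph{Absence of wandering intervals.} Suppose $f$ has a wandering interval and pick a maximal one $W$; since $W,f(W),f^{2}(W),\dots$ are pairwise disjoint inside a bounded interval, $\sum_j |f^{j}(W)|<\infty$, so $|f^{j}(W)|\to0$. For each $n\ge3$ I would run Denjoy's smallest-interval argument: if $f^{i}(W)$ is a shortest member of $\{W,\dots,f^{n}(W)\}$, its two spatial neighbours in this family are at least as long, so $f^{i}(W)$ lies well inside the interval $W_i'$ spanned by itself and those neighbours, with a fixed amount of Koebe space on each side (a definite lower bound for $\mathbf{C}(W_i',f^{i}(W))$). Pulling $W_i'$ back through the branch of $f^{i}$ that carries $W$ onto $f^{i}(W)$ produces $W_0'\supset W$ and a chain $W_0',f(W_0'),\dots,f^{i}(W_0')=W_i'$, and the combinatorial core of the proof --- de Melo--van Strien's multiplicity lemma --- is that the way $W_i'$ was chosen forces this chain to have uniformly bounded multiplicity. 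Granting that, fact (a) gives $\mathbf{C}(W_0',W)\ge\delta_0>0$ with $\delta_0$ independent of $n$, so $W$ is $\delta$-well inside $W_0'$ uniformly in $n$. Now let $n\to\infty$. Since $|f^{i}(W)|\to0$ one must have $i=i(n)\to\infty$, so along a subsequence $W_0'(n)\to\widehat W\supsetneq W$ (strictly, by the $\delta$-space). For every fixed $N$ the intervals $\widehat W,f(\widehat W),\dots,f^{N}(\widehat W)$ then have pairwise disjoint interiors, being limits of essentially disjoint families; hence the interior of $\widehat W$ is a wandering interval strictly containing $W$, contradicting maximality. The genuinely hard part is the multiplicity lemma when the orbit of $W$ returns infinitely often very close to the critical set: one then groups returns into blocks, runs the smallest-interval scheme relative to the first-return maps to shrinking critical neighbourhoods, and uses non-flatness to keep the multiplicity under control through those returns --- the full treatment, in increasing generality (Guckenheimer, Yoccoz, Lyubich, Blokh, de Melo, van Strien, Martens), is in \cite[Ch.~IV]{MR1239171} and \cite{MR1161268}.

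\medskip

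\emph{Expansion along long periodic orbits.} Assume now $f\in C^{2+zygmund}$ with non-flat critical points and suppose the conclusion fails. Using the first part together with the finiteness of the set of non-repelling periodic orbits (a theorem of the same vintage, extending Singer's bound \cite{MR494306}) one reduces to a sequence of \emph{repelling} periodic points $p_k$ of periods $N_k$ with $\lambda_k:=|Df^{N_k}(p_k)|\to1^{+}$; since repelling periodic points of a fixed period are isolated, necessarily $N_k\to\infty$. Let $I_k$ be the maximal interval containing $p_k$ on which $f^{N_k}$ is monotone; because $f$ has finitely many critical points, $\partial I_k$ is cut out by the critical set. The key claim to establish is that some definite-size (in the relevant intrinsic scale) subinterval $I_k'\subset I_k$ around $p_k$ has small $f^{N_k}$-image and carries a fixed amount of Koebe space; granting this, fact (b) shows that the distortion of the branch $f^{N_k}|_{I_k'}$ tends to $1$, so on $I_k'$ the map $f^{N_k}$ has multiplier everywhere within a factor $\to1$ of $\lambda_k$, hence $\to1$. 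From such a sequence of almost-affine, almost-identity monotone returns on intervals of definite relative size one extracts, by compactness together with the selection of a subinterval whose first $N_k$ iterates under $f$ are pairwise disjoint of length bounded away from $0$, a limiting wandering interval --- contradicting the first part. The main obstacle is precisely this Koebe-space claim: \emph{a priori} the forward orbit of $p_k$ could shadow the critical set so closely that every monotone branch of $f^{N_k}$ near $p_k$ is too thin to be useful, and ruling this out (and, along the way, that the relevant intervals shrink) is where one needs $C^{2+zygmund}$ rather than merely $C^{1+Zygmund}$, alongside non-flatness, the finiteness of the critical set, and the already-proved absence of wandering intervals. The details are again in \cite[Ch.~IV]{MR1239171}.
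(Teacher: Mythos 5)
Your outline follows exactly the route the paper itself indicates: the survey does not actually prove this theorem but sketches the Denjoy smallest-interval/Koebe argument and notes that the critical case is handled via cross-ratio distortion (for which $C^{1+Zygmund}$ is the natural class), non-flatness of the critical points, and a more elaborate combinatorial analysis of the orbit of a wandering interval, deferring the complete proofs to \cite{MR1161268, MR1239171}. Your sketch --- including the explicit deferral of the bounded-multiplicity lemma near the critical set and of the Koebe-space claim for long periodic orbits to those same references --- is therefore essentially the same approach as the paper's own treatment, only somewhat more detailed (in particular on the second assertion, which the paper does not discuss at all).
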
 

Interestingly, it is not clear to what extent local symmetry around a critical point
is crucial. Indeed, consider a map of the form 
$$f(x)=\left\{ \begin{array}{rl}  x^\alpha +c  & \mbox{ for }x>0 \\
 				         x^\beta  +c & \mbox{ for }  x<0
				            \end{array}  \right. 
				            $$
with $\alpha\ne \beta , \alpha,\beta>1$ and $c$ real. It is not known whether such a map can have wandering intervals. For $\alpha=\beta$ the answer is no, due to the previous theorem.  For the case that 
$\alpha\ne \beta$ very little is known, except for the case that $\alpha=1<\beta$ and $f$ has
Feigenbaum-Coullet-Tresser dynamics, see  \cite{MR4152268} (and the proof of absence
of wandering intervals in that case follows a rather curious approach).

 The real bounds that go into the 
 proof of the absence of wandering intervals for real maps, 
 certainly inspired Dennis'  proofs of complex bounds which are crucial
 in his renormalisation theory. 
 
 As mentioned, a crucial ingredient in the proof of real bounds
 is Schwarzian derivative, or more generally the notion of {\em cross-ratio}. 
 A special cross-ratio inequality was used by Yoccoz to show that
smooth circle homeomorphisms with a unique non-flat critical point  cannot have wandering
intervals, see \cite{MR741080}. More general cross-ratio inequalities were 
then used in \cite{MR997312} for the interval case
and subsequently in \cite{MR968483} for circle endomorphisms, see  \cite[Sections IV.1 and IV.2]{MR1239171} for a discussion
of the connection between cross-ratio and Schwarzian derivative. 
In particular, the cross-ratio distortion arguments (i) suggest  the relevance of the 
 Poincar\'e metric on $(\C\setminus \R) \cup J$, which is the complex analogue of the 
 cross-ratio on a real interval $J$. Indeed, let $D_r(J)$ be the  set of points consisting 
 of the set of points with distance to $J$ of at most $r$  with 
 respect to the Poincar\'e metric on $(\C\setminus \R) \cup J$. 
 This set is often called a Poincar\'e disc, and is bounded by two arcs of the circles through $a,b$. 
 Using the Schwartz inclusion lemma, it then follows that if $f$ is (for example) 
 a real polynomial so that $f\colon J'\to J$  is a diffeomorphism and so that all critical values of 
 $f$ lie in $\R\setminus J$, then the component of $f^{-1}(D_r(J))$ intersecting 
 $J'$ is contained in $D_r(J')$.    This turned out to be a key ingredient to the proof of 
 his theorem on complex bounds for renormalisable maps, see Theorem~\ref{thm:complexbounds}.

\medskip 
Naturally, Dennis did ask himself whether there are analogues of his no wandering
domain theorem in the higher dimensional case in the smooth category. 
A partial answer to this question is given by Theorem~\ref{thm:withnorton} for toral diffeomorphisms
of Denjoy type.

\subsection{Conformal measures for rational maps} \label{sec:conf-meas-rat}

Soon after developing his no wandering theorem, Dennis  introduced the notion 
of {\em conformal $\delta$-measure} for a rational map $f$. 
This is a Borel probability measure $m$ 
so that 
$$m(fA)= \int_A |Df|^\delta \, dm , $$
for every Borel measurable set $A\subset \bar \C$ and 
where it is assumed that $\delta\ge 0$, see   \cite{MR730296}.
Dennis then showed that 
one can also construct conformal measures on the Julia set
analogous to what he  had done before in the setting of Kleinian groups, extending 
earlier work by Patterson: 
\begin{theorem}[Existence of conformal measures for rational maps, see  \cite{MR730296}] 
 For every rational map there exists a conformal measure. 
In the hyperbolic case, the exponent $\delta$ is positive and is equal to the Hausdorff dimension 
of the Julia set of the rational map. 
\end{theorem}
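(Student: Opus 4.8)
The plan is to transplant to holomorphic dynamics the Patterson--Sullivan construction for Kleinian groups recalled above, with the group orbit of a point replaced by the backward orbit of the rational map. Fix a rational map $f$ of degree $d\ge2$, equip $\bar\C$ with the spherical metric $\sigma$, and write $\|Df^{n}\|$ for the derivative of $f^{n}$ measured in $\sigma$. The first step is to recast the conformality condition: introducing the transfer (Ruelle) operator $\mathcal L_{t}g(x)=\sum_{f(y)=x}\|Df(y)\|^{-t}g(y)$ (sum over preimages with multiplicity), one checks that a Borel probability measure $m$ is $\delta$-conformal in the sense of the statement precisely when $\mathcal L_{\delta}^{*}m=m$, and that a general eigenmeasure $\mathcal L_{\delta}^{*}m=\lambda m$ is ``conformal up to the factor $\lambda$'', with Radon--Nikodym Jacobian $\lambda\|Df\|^{\delta}$ along injectivity branches. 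So it suffices to produce, for a suitable exponent $\delta$, an eigenmeasure with $\lambda=1$.

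Second, I would build such an eigenmeasure by a Patterson-type limiting procedure. Pick a base point $z_{0}\in\bar\C$ lying outside the (at most two-point) exceptional set and outside the countable forward orbit of the critical points, and form the Poincar\'e series $\Theta(t)=\sum_{n\ge1}\sum_{f^{n}(w)=z_{0}}\|Df^{n}(w)\|^{-t}=\sum_{n\ge1}(\mathcal L_{t}^{n}\mathbf 1)(z_{0})$, with critical exponent $\delta=\inf\{t>0:\Theta(t)<\infty\}$, which is finite. If $\Theta(\delta)=\infty$, the normalised atomic measures that place mass $\Theta(t)^{-1}\|Df^{n}(w)\|^{-t}$ at each preimage $w\in f^{-n}(z_{0})$, $n\ge1$, have their total mass swept, as $t\searrow\delta$, onto the accumulation set of backward orbits, which is $J(f)$; any weak-$*$ limit (Helly/Prokhorov) is a candidate measure. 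If instead $\Theta(\delta)<\infty$, I would insert, exactly as Patterson did in the Fuchsian case and as recalled above, a slowly-varying mollifier $h$ making the modified series diverge at $\delta$, and repeat. The essential verification is that a limit measure $m$ is genuinely $\delta$-conformal and supported on $J(f)$: the $\Gamma$-equivariance used in the Kleinian setting is here the identity $f^{-(n+1)}(z_{0})=f^{-1}(f^{-n}(z_{0}))$ together with the chain rule, and passing to the limit --- controlling error terms that vanish precisely because $\delta$ was chosen critical --- yields $\mathcal L_{\delta}^{*}m=m$ with eigenvalue $1$.

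Third, in the hyperbolic (Axiom A) case I would upgrade this to the Hausdorff-dimension statement by a Koebe/bounded-distortion argument of the flavour used repeatedly elsewhere in this survey. Here $J(f)$ lies a positive distance from the closure of the post-critical set and $f$ is uniformly expanding on $J$, say $\|Df^{n}(z)\|\ge C\kappa^{n}$ with $\kappa>1$ for $z\in J$. Given $x\in J$ and small $r>0$, take the least $n=n(x,r)$ for which $f^{n}$ carries a topological disc around $x$ that contains $B(x,r)\cap J$ univalently, with uniformly bounded distortion, onto a disc of definite size; conformality of $m$ together with this distortion bound then gives $m(B(x,r)\cap J)\asymp\|Df^{n}(x)\|^{-\delta}\asymp r^{\delta}$, with constants independent of $x$ and $r$. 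This two-sided mass estimate is a Frostman-type bound which at once shows $0<m(J)<\infty$, that $m$ is non-atomic, and that $\dim_{H}J=\delta$; and it forces $\delta>0$, since $\delta=0$ would make $m(B(x,r)\cap J)\asymp1$ for every $r$, contradicting non-atomicity.

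The main obstacle throughout is the presence of critical points, where $\|Df\|$ vanishes and $\mathcal L_{t}$ fails to preserve continuity: this is what can create atoms of a conformal measure at exceptional or pre-critical points, what forces the careful choice of $z_{0}$ and the Patterson mollifier in the convergence-type case, and what makes the general existence statement genuinely delicate rather than a routine fixed-point argument. It is exactly this difficulty that disappears in the hyperbolic regime --- there $J(f)$ keeps a definite distance from the critical set, so expansion and Koebe distortion hold uniformly, and they simultaneously pin down the conformal measure and the Hausdorff dimension.
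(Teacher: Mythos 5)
The survey itself offers no proof of this statement---it records Sullivan's theorem with a pointer to \cite{MR730296} and the remark that the construction parallels the Patterson--Sullivan construction for Kleinian groups---so the comparison is with Sullivan's original argument, and your strategy (backward-orbit Poincar\'e series with a Patterson mollifier, weak-$*$ limits, the transfer-operator identity giving eigenvalue $1$, and Koebe distortion plus a Frostman-type estimate $m(B(x,r)\cap J)\asymp r^{\delta}$ in the hyperbolic case) is indeed that strategy; the hyperbolic half of your sketch is essentially complete and standard.

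The existence half, however, has two genuine gaps. First, the finiteness of the critical exponent of $\Theta$ is asserted, not proved, and with your choice of base point it can actually fail: excluding only the exceptional set and the forward orbit of the critical points does not prevent $z_{0}$ (or backward branches of $z_{0}$) from lying in a Siegel disc or Herman ring, and the branch that stays inside a rotation domain contributes, for every $n$, a term $\|Df^{n}(w)\|^{-t}$ bounded away from zero, so $\Theta(t)=\infty$ for all $t$ and your $\delta$ is $+\infty$. One must take $z_{0}$ in $J(f)$ (off the countable grand orbits of the critical points), and even then the convergence of the full series for some finite $t$ is not automatic when $J(f)$ contains critical or parabolic points; the robust remedy is to run the Patterson construction level by level with a pressure-type normalisation $e^{-nP(t)}$, which produces an $e^{P(t)}\|Df\|^{t}$-conformal measure, and then to prove that $t\mapsto P(t)$ has a zero $\delta\in(0,2]$. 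Your claim that the eigenvalue is $1$ ``because $\delta$ was chosen critical'' silently assumes the unnormalised series has a finite critical exponent, which is exactly what needs justification. Second, the passage to the limit $\mathcal{L}_{t}^{*}\nu_{t}\to\mathcal{L}_{\delta}^{*}m$ is not a formality: $\mathcal{L}_{\delta}g$ is unbounded at the critical values lying in $J(f)$, so weak-$*$ convergence of the $\nu_{t}$ does not give convergence of $\int \mathcal{L}_{t}g\,d\nu_{t}$, and mass may concentrate at critical or parabolic points and create atoms for which the conformality identity must be verified separately or excluded. You correctly single this out as ``the main obstacle,'' but naming it is not overcoming it; as written, the existence part is an outline of the correct strategy rather than a proof.
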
 

The paper  \cite{MR730296} is also shows that for Dennis the theory of dynamical systems
is unified:  topological, smooth and ergodic aspects are all connected. Moreover, in his view the theory
of real and complex one-dimensional systems together with the theory of Kleinian groups
all should be viewed as highly interwoven.

\subsection{The $\lambda$-Lemma} 
One of Dennis' most used and cited papers on holomorphic dynamics is one in which he,
and his coauthors Ma\~n\'e and  Sad, proved that most maps are
{\em stable}. The main technical tool in that paper is the celebrated:

\begin{theorem}[$\lambda$-Lemma, see  \cite{MR732343}]\label{thm:lambdalemma} 
Let $A$ be a subset of $\overline  \C$, $\D$ the open unit disc and $i_\lambda\colon A\to \overline  \C$ a family
of maps so that
\begin{enumerate} 
\item  for each $z\in A$,  $\D\ni \lambda\mapsto i_\lambda(z)$ is  analytic; 
\item  $A\ni z\mapsto i_\lambda(z)$ is injective for each $\lambda\in \D$; 
\item $i_0=id$. 
\end{enumerate} 
Then every $i_\lambda\colon A\to \overline \C$ has a quasiconformal extension to a continuous map 
$i_\lambda\colon \overline A \to \overline  \C$, which for fixed 
   $\lambda$ is a topological embedding, and so that $\D\ni \lambda  \mapsto i_\lambda(z)$ is analytic for 
   each fixed $z\in \overline A$. 
\end{theorem}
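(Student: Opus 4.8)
The plan is to prove the $\lambda$-Lemma in three stages: (1) extend each $i_\lambda$ from $A$ to its closure $\overline{A}$ as a family that is still jointly analytic in $\lambda$ and injective in $z$; (2) show the resulting boundary map $i_\lambda\colon \overline{A}\to\overline{\C}$ is quasiconformal, with a distortion bound independent of $\lambda$ on compact subsets of $\D$; (3) upgrade to a genuine topological embedding and check that analyticity in $\lambda$ persists for the extended map.

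For stage (1), the key point is a \emph{holomorphic motion automatically extends to the closure}. Fix $z_0\in\overline{A}$ and a sequence $z_k\in A$ with $z_k\to z_0$. The idea is to show that $\lambda\mapsto i_\lambda(z_k)$ converges locally uniformly in $\D$. The engine here is the Montel-type normality that comes from injectivity: for fixed $\lambda$, the points $\{i_\lambda(z)\colon z\in A\}$ are pairwise distinct, so on the three-punctured sphere $\overline{\C}\setminus\{i_\lambda(a_1),i_\lambda(a_2),i_\lambda(a_3)\}$ (three fixed base points of $A$, which we may assume has at least three points, the elementary cases being trivial) the maps $\lambda\mapsto i_\lambda(z)$ omit three values and hence form a normal family. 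Passing to a locally uniform limit defines $i_\lambda(z_0)$; one checks this is independent of the approximating sequence precisely because two distinct limits would force $i_\lambda(z_0)$ to take two values, violating injectivity in the limit (an argument using the fact that a nonconstant analytic function of $\lambda$ cannot agree with another such function on a set with a limit point unless they coincide). Injectivity of $z\mapsto i_\lambda(z)$ on $\overline{A}$ and analyticity of $\lambda\mapsto i_\lambda(z_0)$ both survive this limiting process.

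\textbf{The hard part} is stage (2): producing a quasiconformal extension off the (possibly totally disconnected, measure-zero) set $\overline{A}$ to all of $\overline{\C}$, with a \emph{uniform} dilatation bound. The mechanism is an infinitesimal / Beltrami-coefficient estimate: for each fixed $\lambda$ the map $i_\lambda$, restricted to any four points, distorts cross-ratios by an amount controlled by the hyperbolic distance $\rho_{\D}(0,\lambda)$ — this is a consequence of applying the Schwarz lemma to the analytic map $\D\to\overline{\C}\setminus\{\text{three values}\}\cong\D$ built from the holomorphic motion. Quantitatively, one obtains that $i_\lambda$ is quasisymmetric on $\overline{A}$ with constant depending only on $\rho_{\D}(0,\lambda)$, hence (by an Ahlfors–Beurling type extension, or by filling in the complementary domains with quasiconformal maps whose Beltrami coefficients are glued together) extends to a quasiconformal self-map of $\overline{\C}$ with $K_{i_\lambda}\le K(\rho_{\D}(0,\lambda))$, where $K(\cdot)\to 1$ as $\lambda\to 0$. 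Here one invokes the two-dimensional Measurable Riemann Mapping Theorem recalled earlier in the excerpt: the extension is built by prescribing a $\Gamma$-free measurable conformal structure, solving the Beltrami equation, and using that the solution depends analytically on the Beltrami coefficient, which in turn depends analytically on $\lambda$.

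For stage (3), fixing $\lambda\in\D$ the extended map $i_\lambda\colon\overline{\C}\to\overline{\C}$ is a quasiconformal homeomorphism, so in particular a topological embedding of $\overline{A}$; and for fixed $z\in\overline{A}$ analyticity of $\lambda\mapsto i_\lambda(z)$ was secured in stage (1). One final remark on normalization: the three base points $a_1,a_2,a_3$ are moved analytically, so after post-composing with the (analytically varying) Möbius transformation that sends $i_\lambda(a_j)$ back to $i_0(a_j)$ we may assume $i_\lambda$ fixes three points; this does not affect the conclusion since Möbius maps are conformal, and it is what makes the three-omitted-values normality argument in stage (1) and the Schwarz-lemma estimate in stage (2) directly applicable. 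I expect the extension-with-uniform-control step to be where essentially all the real work lies; everything else is a matter of assembling normality, Schwarz's lemma, and the MRMT.
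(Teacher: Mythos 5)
Your core argument is the same as the paper's: the paper proves the $\lambda$-Lemma exactly by looking at the cross-ratio of four distinct points of $A$, observing that as a function of $\lambda$ it is analytic and omits $0,1,\infty$, and applying the Schwarz--Pick lemma for the hyperbolic metric on the thrice-punctured sphere to get a distortion bound depending only on $\rho_{\D}(0,\lambda)$; from this uniform quasisymmetry one gets equicontinuity, the continuous extension to $\overline{A}$, injectivity, quasiconformality on $\overline{A}$, and (by locally uniform limits) analyticity in $\lambda$. Your stage (1) plus the first half of stage (2) reproduce this. Two remarks on rigor: the well-definedness of the limit along different sequences $z_k\to z_0$, and the injectivity of the extended map, are not consequences of the identity theorem as you suggest, but of Hurwitz's theorem applied to the nowhere-vanishing analytic functions $\lambda\mapsto i_\lambda(z_k)-i_\lambda(z'_k)$ (in a chart, after the Möbius normalisation you describe): a locally uniform limit is either nowhere zero or identically zero, and it vanishes at $\lambda=0$, hence vanishes identically; and the continuity and embedding assertions on $\overline{A}$ really do require the quantitative cross-ratio estimate, since normality in $\lambda$ alone gives no equicontinuity in $z$.

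The genuine problem is the second half of your stage (2), which you single out as ``where essentially all the real work lies.'' The statement only asserts that $i_\lambda$ extends to $\overline{A}$ and is quasiconformal there (in the generalized sense of bounded cross-ratio/modulus distortion for maps of arbitrary sets); it does not ask for a quasiconformal homeomorphism of all of $\overline{\C}$. Producing such a global extension with uniform dilatation is a strictly harder theorem: it is the later Sullivan--Thurston result (which even requires restricting $\lambda$ to a smaller disc $\D_0\Subset\D$), and in full generality S{\l}odkowski's theorem. Your proposed mechanism would not deliver it: the Ahlfors--Beurling extension applies to quasisymmetric maps of the line or circle, not of an arbitrary (possibly totally disconnected) compact set, and ``gluing Beltrami coefficients on the complementary components'' is not a routine construction -- there is no canonical conformal structure to prescribe off $\overline{A}$ that is compatible with the motion without genuinely new ideas. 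So that part should be deleted (it is not needed for the statement) or replaced by a citation to the extended $\lambda$-lemma; as written it both misreads what is to be proved and asserts a construction that fails.
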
 

The proof of the $\lambda$-Lemma is surprisingly simple, and is based on the Schwarz's lemma
which states that any analytic map $\xi \colon \D \to \bar \C \setminus \{0,1,\infty\}$ 
is contracting w.r.t. the Poincar\'e metric on these sets. 
 Now consider the cross-ratio distortion of any distinct four points
in $A$.  Using that the cross-ratio distortion omits the values $0,1,\infty$ and this version of the  
Schwarz lemma, one obtains the above $\lambda$-Lemma.

In addition to the MRMT and the theory of quasiconformal homoeomorphisms,  the
 $\lambda$-Lemma has become one of the most widely used tools in the field of holomorphic dynamical 
 systems.\footnote{Independently, Lyubich proved an analogous result, see \cite{MR718838}.} 
 
 Later, jointly with Thurston, Dennis improved this $\lambda$-Lemma to show that one can extend $i_\lambda\colon A\to \overline \C$
 to a qc map  $i_\lambda\colon \overline \C \to \overline \C$, provided we restrict $\lambda$ 
 to a suitable disc $\D_0\Subset \D$ (here the choice of $\D_0$ is universal), 
 see \cite{MR857674}.

 \subsection{Density of stable maps} 

 The initial motivation for the $\lambda$-Lemma, and the  
  main purpose of the paper \cite{MR732343},  was to prove
 that stable maps are dense (within the space of rational maps). 
 As a first step towards proving this, the class of $J$-stable maps is considered. Here $f$ is called
 {\em $J$-stable} if for each $g$ near $f$ there exists a homeomorphism $h$ 
 of $J(f)$ to $J(g)$ so that $h\circ f=g\circ h$ on $J(f)$ and so that $J(g)$ depends continuously 
 on $g$   in the sense of Hausdorff distance between closed sets. $f$ is called {\em structurally stable}
 if the conjugacy holds on $\bar \C$.

Consider a family  $f_w(z)$ or rational maps depending on $w\in W$, where $W$ is a connected complex submanifold of $\C^{2d+1}$, and so that $(w,z)\to f_w(z)$ is analytic in $w,z$. Let $H(f)\subset W$ be the set of $w\in W$ for which there exists a neigbourhood $V$ with $w\in V\subset W$
so that each periodic point $p_w$ of $f_w$ depends analytically on $w\in V$ and so that 
their  multiplier satisfies either $\lambda(w)\ne 1$ for all $w\in V$ or $\lambda(w) \equiv constant$ for all $w\in V$.
 
 \begin{theorem}[$J$-stability, see  \cite{MR732343}] 
  $H(f)$ is  open and dense in $W$. Moreover, $f_w$ is $J$-stable if and only if $w\in W$ and 
 the conjugating homeomorphism $h_w$ can be taken to be analytic in $w$ and quasiconformal in $z$.
 \end{theorem}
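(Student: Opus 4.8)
The plan is to deduce everything from two ingredients: the classical identification of $J(g)$ with the closure of the repelling periodic points of $g$, and the $\lambda$-Lemma (Theorem~\ref{thm:lambdalemma}), which upgrades a holomorphic motion of an arbitrary set to a holomorphic motion of its closure. Openness of $H(f)$ is practically built into the definition: a witnessing polydisc $V$ for $w_0$ also witnesses, after shrinking, every point of $V$, since there each periodic point is still analytically followable and each multiplier is still either locally constant or never $1$.

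For density I would use Fatou's bound that the number $N(w)$ of attracting cycles of $f_w$ is at most $2d-2$ (each attracting cycle captures a critical point), together with the lower semicontinuity of $N$ (an attracting cycle persists, with multiplier staying inside the open unit disc, under small perturbations, by the implicit function theorem). A locally bounded, integer-valued, lower semicontinuous function is locally constant on the dense open set $U^{\ast}\subseteq W$ on which it attains a local maximum. I would then verify $U^{\ast}\subseteq H(f)$: on a neighbourhood where $N$ is constant, no cycle can move between ``attracting'' and ``non-attracting'' in either direction (that would change $N$ nearby), and if the multiplier of some cycle were non-constant its image would be open, hence could not meet the unit circle without dipping into the open disc and so turning that cycle attracting somewhere nearby, a contradiction; therefore every multiplier is either constant on $V$ or confined to one side of the unit circle (in particular never $1$), which is exactly the $H(f)$ condition. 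This yields density, and hence, with openness, that $H(f)$ is open and dense.

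The substance of the theorem is that $w_0\in H(f)$ implies $f_w$ is $J$-stable with a conjugacy holomorphic in $w$ and quasiconformal in $z$. Fix a witnessing polydisc $V$. A short argument (using that the image of a non-constant holomorphic multiplier is open, the density of roots of unity on the unit circle, and the fact that a multiplier passing through a nontrivial root of unity spawns a satellite cycle and hence a collision of periodic points, which would violate analytic followability on $V$) shows that on $V$ every multiplier is either constant or stays off the unit circle; thus every repelling cycle of $f_{w_0}$ stays repelling, every attracting cycle stays attracting, every indifferent cycle keeps its multiplier, and no two periodic points collide. Since $f_w^n(z)=z$ has a constant number of roots, the repelling periodic points $A_0$ of $f_{w_0}$ continue holomorphically and injectively over $V$, and these continuations are precisely the repelling periodic points of $f_w$. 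The family $i_w\colon A_0\to\overline{\C}$ is therefore an equivariant holomorphic motion, $f_w\circ i_w=i_w\circ f_{w_0}$ on $A_0$ with $i_{w_0}=\mathrm{id}$; restricting $w$ to complex lines through $w_0$ and invoking the $\lambda$-Lemma, $i_w$ extends to a holomorphic motion of $\overline{A_0}=J(f_{w_0})$, quasiconformal in $z$ for each fixed $w$, holomorphic in $w$ for each fixed $z$, and still equivariant and injective by continuity. Then $i_w\!\left(J(f_{w_0})\right)$ is a compact $f_w$-invariant set containing every repelling periodic point of $f_w$, hence equal to $J(f_w)$, so $h_w:=i_w|_{J(f_{w_0})}$ conjugates $f_{w_0}$ to $f_w$ on the Julia sets with the stated regularity, and continuity of the motion makes $w\mapsto J(f_w)$ continuous in the Hausdorff metric. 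For the converse, if $w\notin H(f)$ then arbitrarily near $w$ either two distinct periodic points of some period collide, which changes the number of periodic points of that period lying in or accumulating on $J$ (a topological conjugacy invariant), or a periodic orbit changes type, which moves a periodic point between the Fatou and Julia sets and alters the topological dynamics on $J$; neither is compatible with $J$-stability.

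I expect the main obstacle to be controlling the entire, unboundedly high-period family of periodic orbits on a single fixed neighbourhood at once --- no collisions, no multiplier crossing the unit circle --- rather than orbit by orbit; this is exactly where the definition of $H(f)$, the finiteness $N\le 2d-2$, the open-mapping property of multipliers, and the constancy of the root count of $f_w^n(z)=z$ all have to be combined, and it is what allows the $\lambda$-Lemma to identify the moved set as all of $J(f_w)$ rather than a proper invariant subset. The verification $U^{\ast}\subseteq H(f)$ in the density step, and the bookkeeping around the ``$\lambda\equiv\mathrm{const}$'' alternative, are the other delicate points; openness and the converse are comparatively soft.
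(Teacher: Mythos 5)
Your proposal follows essentially the same route as the paper and the Ma\~n\'e--Sad--Sullivan article it cites: openness and density of $H(f)$ via the bound of at most $2d-2$ attracting cycles together with lower semicontinuity of their number, and $J$-stability on $H(f)$ by applying the $\lambda$-Lemma (Theorem~\ref{thm:lambdalemma}) to the equivariant holomorphic motion of the repelling periodic points, whose closure is the Julia set. The survey itself supplies no detailed proof beyond identifying the $\lambda$-Lemma as the key technical tool of that paper, and your sketch is exactly that intended argument.
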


 Analogous to the set $H(f)$, the authors introduce the set $C(f)\subset W$ of points $w$ for which  there is a neighbourhood $V$ with $w\in V\subset W$ so that each critical point $c_i(w)$ of $f_w$ depends analytically 
 on $w\in V$ and so that any critical relation relation $f^n_w(c_i(w))=f^m_w(c_j(w)))$ holds either for all $w$ in $V$ or 
 for none. 

   \begin{theorem}[Structurally stable maps are dense, see  \cite{MR732343}] 
  \label{thm:densstrstab} 
    $C(f)$ is an open and dense subset of $H(f)$.  Moreover, if $w\in C(f)$ then 
   $f$ is structurally stable. 
 \end{theorem}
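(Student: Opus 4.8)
The plan is to prove Theorem~\ref{thm:densstrstab} in three stages: openness of $C(f)$, density of $C(f)$ in $H(f)$, and structural stability of $f_w$ when $w\in C(f)$. Since $C(f)\subset H(f)$, the theorem on $J$-stability stated above is available throughout: over each connected component of $H(f)$, after fixing a base point $w_0$, the Julia sets move by a holomorphic motion, i.e. there is a conjugacy $h_w\colon J(f_{w_0})\to J(f_w)$ analytic in $w$ and quasiconformal in $z$; by the $\lambda$-Lemma (Theorem~\ref{thm:lambdalemma}) and its global refinement of Sullivan and Thurston, $h_w$ extends to a quasiconformal homeomorphism of $\overline{\mathbb C}$ depending analytically on $w$, after restricting to a smaller parameter neighbourhood. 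A standard consequence of $J$-stability is that each critical orbit $n\mapsto f^n_w(c_i(w))$ varies as a normal family in $w$; thus no critical orbit bifurcates: a critical point of $f_w$ that lies in the Fatou set has its orbit converging to a cycle that persists throughout the component, and a critical point in the Julia set stays there.

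Openness of $C(f)$ is immediate from the definition: if $w_0\in C(f)$ is witnessed by a neighbourhood $V$ on which the critical points are analytic and every critical relation $f^n_w(c_i(w))=f^m_w(c_j(w))$ is either identically valid or nowhere valid on $V$, then the same $V$ witnesses $w\in C(f)$ for every $w\in V$, so $V\subset C(f)$.

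For density, fix $w_0\in H(f)$ and $\varepsilon>0$ and work in a ball $B\subset H(f)$ about $w_0$; after a preliminary arbitrarily small perturbation and a shrinking of $B$ (moving off the proper subvariety where distinct critical points collide) we may assume the critical points $c_1(w),\dots,c_{2d-2}(w)$ are holomorphic on $B$. For each $(i,j,n,m)$ the set $Z_{i,j,n,m}=\{w\in B:\,f^n_w(c_i(w))=f^m_w(c_j(w))\}$ is an analytic subvariety, and by unique continuation it is either all of $B$ -- a \emph{stable} relation, which we discard -- or a proper subvariety; by definition $C(f)\cap B$ is the complement in $B$ of the union of the proper ones. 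The crux, and the step I expect to be the main obstacle, is to show that this union is \emph{locally finite} near every point at which no proper relation holds, so that its complement is genuinely open and dense rather than just a dense $G_\delta$. When a critical point of $f_{w_0}$ lies in the Fatou set, normality of the critical orbits makes $w\mapsto f^n_w(c_i(w))$ converge to a limiting holomorphic map as $n\to\infty$; hence a ``late'' relation $f^{n+k}_w(c_i(w))=f^n_w(c_j(w))$ reduces, for each fixed offset $k$, to the single limit equation (one passes from step $n$ to $n+1$ by applying $f_w$), and the geometric speed of convergence to an attracting cycle shows that all but finitely many offsets $k$ give an equation failing on a fixed neighbourhood of $w_0$ -- the parabolic case being similar but more delicate. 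When a critical point of $f_{w_0}$ lies in the Julia set one shows that on that component of $H(f)$ it is carried by the holomorphic motion $h_w$, so that \emph{every} critical relation is automatically locally constant and the whole component already lies in $C(f)$ (the ``queer'' components). In either case $C(f)\cap B$ is the complement of a locally finite family of proper subvarieties, hence open and dense in $B$; as $w_0$ and $\varepsilon$ were arbitrary, $C(f)$ is dense in $H(f)$.

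Finally, let $w\in C(f)$; we must produce, for $w$ near $w_0$, a homeomorphism $\phi_w\colon\overline{\mathbb C}\to\overline{\mathbb C}$ conjugating $f_{w_0}$ to $f_w$ and depending continuously on $w$. On the Julia set take $\phi_w=h_w$. By Theorem~\ref{thm:nowandering} and the attendant classification, every Fatou component is eventually periodic and a periodic component is an attracting or parabolic basin, a Siegel disc, or a Herman ring; the $H(f)$-condition keeps the multiplier of the relevant attracting or parabolic cycle -- and the rotation number of a rotation domain -- locally constant, so each such component persists and carries normalising coordinates (B\"ottcher or Koenigs, Fatou, or the rotation linearisation) depending analytically on $w$. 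One defines $\phi_w$ first on each periodic component through these coordinates, then spreads it over the rest of the Fatou set by pulling back along $f_w$; that $w\in C(f)$ -- that the critical points sit inside the Fatou components with locally constant combinatorics -- is precisely what makes these pull-backs single-valued, mutually consistent, and compatible with $h_w$ along the common boundaries in $J(f_w)$. Gluing the Julia-set and Fatou-set pieces yields the required global conjugacy, whose continuous (and $w$-analytic on $J$) dependence on $w$ exhibits $f_w$ as structurally stable. I expect the reconciliation of the Fatou-set conjugacy with $h_w$ along the Julia set, and the Herman-ring case, to need the most care.
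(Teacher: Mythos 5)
A preliminary remark: this survey does not prove the theorem at all --- it is stated with a citation to the Ma\~n\'e--Sad--Sullivan paper \cite{MR732343} --- so your proposal can only be measured against the argument in that source. Your overall architecture does match it in outline: openness of $C(f)$ is indeed immediate from the definition; density is attacked through the dichotomy (on a connected parameter ball, each relation locus $Z_{i,j,n,m}$ is either everything or a proper analytic subset); and structural stability is obtained by extending the motion of the Julia set over the Fatou set using the classification of periodic components and linearising/B\"ottcher/Fatou coordinates.

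There are, however, two genuine gaps in the density argument. First, your claim that a critical point lying in the Julia set is ``carried by the holomorphic motion $h_w$, so that every critical relation is automatically locally constant'' is precisely the nontrivial point, and it is not a formal consequence of the $\lambda$-Lemma: the motion conjugates the dynamics on $J$ only, and a conjugacy restricted to $J$ does not see local degree, so there is no a priori reason it matches critical points with critical points. The correct route is through the Ma\~n\'e--Sad--Sullivan characterisation of $J$-stability by normality of the families $w\mapsto f^n_w(c_i(w))$ together with density of repelling cycles (a ``passive'' critical orbit that meets the moving Julia set, or a moving repelling periodic point, at one parameter does so persistently); as written, this branch is an assertion, not a proof. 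Second, the reduction of density to \emph{local finiteness} of the relation loci near points lying on none of them is false in general. Even in the attracting case, if a critical orbit lands exactly on the attracting cycle at $w_0$ (which is not itself a critical relation), then in Koenigs coordinates the loci $\lambda(w)^k=u_j(w)/u_i(w)$ accumulate at $w_0$ for $k\to\infty$; and if two critical orbits are persistently captured by the same Siegel disc or Herman ring (rotation number locally constant on $H(f)$), the loci $a_j(w)/a_i(w)=e^{2\pi i k\theta}$ accumulate along the real hypersurface $|a_i(w)|=|a_j(w)|$. In both situations local finiteness fails, such points genuinely lie outside $C(f)$, and density must instead be proved by showing that the closure of the union of the unstable relation loci has empty interior, with a case analysis covering attracting, superattracting, parabolic \emph{and} rotation domains; your sketch treats only the generic attracting case, defers the parabolic case, and omits rotation domains entirely. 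The structural-stability part follows the standard construction, but the two difficulties you yourself flag --- compatibility of the Fatou-set conjugacy with $h_w$ along $J$, and the Herman-ring case --- are exactly where the remaining work lies and are left unresolved.
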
   
 
In the late 1960's Smale suggested that a similar result should hold for general 
smooth dynamical systems, but this turned out to be false (due to examples
by Newhouse and others).

 \subsection{Towards the Fatou conjecture: absence of line fields}

A map $f$ is said to be  {\em Axiom A}  (or {\em hyperbolic})  if there exist $\rho>1$  and $C>0$ so that 
$|(f^k)'(z)|>C \rho^k$ for all $k\ge 0$ and $z\in J(f)$. 
It is not hard to see that $f$ is Axiom A if and only if the periodic components of $F(f)$ are  superattractive or attractive and if the orbit of every critical point of $f$ is eventually contained 
in one of these components.

Fatou already stated the following:

\begin{conjecture}[Fatou Conjecture]   Each rational map
can be approximated by an Axiom A rational map of the same degree.
 \end{conjecture}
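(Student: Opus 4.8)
The plan is not to confront density of hyperbolicity head-on, but to reduce it to a rigidity statement about invariant line fields, following the template by which Sullivan settled the analogous problem for Kleinian groups (Theorem~\ref{noinvariantlinefields}) and the dictionary linking the two theories. One starts from the Ma\~n\'e--Sad--Sullivan theory: the structurally stable maps are already dense in the space $\mathrm{Rat}_d$ of rational maps of a fixed degree $d$ (Theorem~\ref{thm:densstrstab}). Since being Axiom~A is an open condition, it therefore suffices to prove that a structurally stable rational map is necessarily Axiom~A --- equivalently, that no connected component of the stable locus consists entirely of non-hyperbolic maps (a so-called \emph{queer} component). By the classification of periodic Fatou components recalled after Theorem~\ref{thm:nowandering}, failure of Axiom~A means the presence of a rotation domain (Siegel disc or Herman ring), of a parabolic cycle, or of a critical point whose forward orbit accumulates on $J(f)$ without being absorbed by an attracting cycle. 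On a stable component the periodic Fatou dynamics can only deform inside a finite-dimensional family --- the quotient surfaces of the attracting, parabolic and rotation components have finite-dimensional Teichm\"uller spaces, in the spirit of the Ahlfors finiteness theorem --- so the whole problem reduces to excluding a queer component on which some critical orbit recurs to the Julia set.

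First I would put the holomorphic-motions machinery to work. On a queer stable component $\Omega$ all the maps are quasiconformally conjugate on $\overline{\C}$: by the $\lambda$-Lemma (Theorem~\ref{thm:lambdalemma}) and its refinement with Thurston, together with the Measurable Riemann Mapping Theorem, the conjugacies form a holomorphic motion and the associated $f$-invariant Beltrami coefficients depend holomorphically on the parameter. Next I would account for these coefficients: each splits into a part supported on the Fatou set --- carried by the finitely many quotient surfaces of the periodic components, hence of bounded dimension --- and a part supported on the Julia set. Since $\Omega$ is open it has the full dimension of $\mathrm{Rat}_d$, which is more than the Fatou part can supply, so on a queer component some $f$-invariant Beltrami coefficient must be genuinely supported on $J(f)$; in particular $J(f)$ has positive Lebesgue measure, and after normalising its absolute value to be $1$ almost everywhere one obtains a measurable $f$-\emph{invariant line field} on $J(f)$. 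This bookkeeping is precisely the McMullen--Sullivan description of the Teichm\"uller space of a rational map, and it shows that the Fatou conjecture is \emph{equivalent} to the following.

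\medskip
\noindent\textbf{No Invariant Line Fields Conjecture.} \emph{A rational map of degree $d\ge 2$ supports no measurable invariant line field on its Julia set, apart from the flexible Latt\`es examples} --- which form a proper algebraic subvariety of $\mathrm{Rat}_d$ and are themselves limits of Axiom~A maps, and so do not obstruct density.

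\medskip
The hard part --- indeed the entire mathematical content --- is this last statement, a rigidity theorem in the same spirit as Sullivan's rigidity theorem for Kleinian groups (Theorem~\ref{noinvariantlinefields}). The route would be through \emph{combinatorial rigidity}: one shows that a rational map is determined, up to M\"obius conjugacy, by its combinatorial data (external angles of the critical values, kneading and puzzle combinatorics, renormalisation type), and that this rigidity is incompatible with a non-trivial invariant line field on $J(f)$. Carrying this out requires (i) Yoccoz puzzles together with real and complex \emph{a priori} bounds forcing the puzzle pieces around a recurrent critical point to shrink with controlled shape, which handles the finitely renormalisable case; and (ii) in the infinitely renormalisable case, control of the renormalisation operator --- Sullivan's complex bounds, and the convergence and universality of renormalisation built on them by McMullen and by Lyubich --- to upgrade those bounds to rigidity. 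This programme was completed for the quadratic family $z\mapsto z^2+c$ by Yoccoz, by Lyubich, and by Graczyk and \'Swiatek, which is why density of hyperbolicity is known there; for general rational maps of higher degree it remains open, the principal obstacle being to control simultaneously the interaction of several critical orbits and to run the renormalisation analysis in the absence of the good \emph{a priori} bounds available in the unimodal setting.
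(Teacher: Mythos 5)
You have not proved the statement, and indeed you could not have: the Fatou Conjecture is an open problem, and the paper presents it as such (``to this day nobody has succeeded''). What you have written is the known \emph{conditional} reduction, which is essentially the content of the McMullen--Sullivan theorem quoted in the paper (``A conditional proof of the Fatou conjecture''): density of structural stability from Ma\~n\'e--Sad--Sullivan (Theorem~\ref{thm:densstrstab}), the $\lambda$-Lemma and the MRMT to produce holomorphically varying invariant Beltrami coefficients on a putative queer component, the dimension count against the finite-dimensional Teichm\"uller space carried by the Fatou set, and hence an invariant line field on a positive-measure Julia set. That part of your argument is correct and is exactly the route the literature (and the paper) takes. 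But the step you label the ``hard part'' --- that no rational map outside the Latt\`es family carries a measurable invariant line field on its Julia set --- is not a lemma you can invoke; it is itself an open conjecture, and your sketch of how one might attack it (puzzles, complex bounds, renormalisation rigidity) only covers the real-quadratic and some unicritical settings where density of hyperbolicity is in fact known (Theorem~\ref{thm:realfatou} and the surrounding discussion in Section~\ref{sec:realfatou}). So the proposal reduces the conjecture to another conjecture; it does not close it.

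To be clear about what would be needed: a genuine proof would have to establish the no-invariant-line-fields statement (or some substitute for it) for \emph{all} rational maps of the given degree, including maps with several interacting complex critical orbits, infinitely renormalisable maps without a priori bounds, and maps with rotation domains of unbounded type --- precisely the cases where no analogue of the real bounds, the enhanced nest, or the Kahn--Lyubich machinery is currently available, and where, as the paper notes, it is not even clear how to construct an initial puzzle partition. Your write-up is honest about this, which is to its credit, but as a proof of the stated conjecture it has a gap that is, at present, the gap.
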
 

No doubt Dennis tried to prove this conjecture but to this day nobody has succeeded in doing so. 
One of the main appealing properties of Axiom A maps  is that they are stable, 
provided they satisfy  some mild additional conditions, and  that their
dynamics is very well-understood. For example, for such a map 
Lebesque almost every initial point converges under iterates to a periodic attractor.

Amongst many other results in 
\cite{MR1620850}, Dennis,   together with McMullen,
shows that the above conjecture can be reduced to proving absence of  {\em measurable
invariant line fields} supported on Julia sets. 
Here a measurable line field on a forward invariant subset $K\subset J(f)$ of positive  
Lebesgue measure is a measurable function $z \mapsto \mu(z)$ on $K$. 
Here one can think of  $\mu(z)$ as a line  through $z$, and  invariance means that  $\mu(f(z)) = Df_z \mu(z)$. 
 
\begin{theorem}[A conditional proof of the Fatou conjecture, see  \cite{MR1620850}]  
Assume that {\em any} rational map which supports a measurable invariant line field on its Julia set
is a Latt\`es map. Then the above Fatou Conjecture holds. 
\end{theorem}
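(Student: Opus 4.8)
The plan is to prove, under the ``no invariant line fields'' hypothesis, the \emph{stable} form of the conjecture: every structurally stable rational map is Axiom~A. Since the structurally stable maps are dense in $\mathrm{Rat}_d$ by Theorem~\ref{thm:densstrstab}, the Fatou Conjecture follows at once.

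So fix a structurally stable $f\in\mathrm{Rat}_d$ and let $\mathcal{X}$ be the connected component of the structurally stable locus containing it. Two elementary observations come first. (1)~A structurally stable map has no parabolic cycle, no Siegel disk, no Herman ring, and no critical point that is periodic or that lands on the Julia set: each of these configurations is destroyed by a generic, arbitrarily small perturbation of $f$ -- along which, in the last two cases, even $J(f)$ fails to vary continuously -- so none of them can occur in an open quasiconformal conjugacy class. In particular $f$ is not a Latt\`es map, the Julia set of a Latt\`es map being the entire sphere. Combined with the classification of periodic Fatou components recalled above, this shows that every periodic component of $F(f)$ is superattracting or attracting, in fact geometrically attracting. (2)~By the characterization of Axiom~A recalled above, $f$ is then Axiom~A as soon as the forward orbit of every critical point eventually enters an attracting basin; so it suffices to rule out a \emph{wild} critical point -- one whose $\omega$-limit set meets $J(f)$.

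To rule this out, bring in the Teichm\"uller theory of the holomorphic dynamical system $(\widehat{\mathbb{C}},f)$. On $\mathcal{X}$ all maps are quasiconformally conjugate, the conjugacies assemble into a holomorphic motion of $J(f)$, which extends to $\widehat{\mathbb{C}}$ by the $\lambda$-Lemma (Theorem~\ref{thm:lambdalemma}); using the MRMT this identifies $\mathcal{X}$ -- modulo the M\"obius group and a discrete modular action -- with the quasiconformal deformation space $\mathrm{Teich}(f)$. By Sullivan's No-Wandering-Domains Theorem~\ref{thm:nowandering} together with the classification of periodic Fatou components, $\mathrm{Teich}(f)$ is finite-dimensional, and it splits canonically as
\[
\mathrm{Teich}(f)\;\cong\;\mathrm{Teich}\bigl(F(f)/f\bigr)\times M\bigl(J(f),f\bigr),
\]
where the first factor is the Teichm\"uller space of the quotient Riemann surface of the Fatou set (a finite-type surface, precisely because there are no wandering domains and no rotation domains), and the second factor $M(J(f),f)$ is the space of $f$-invariant measurable Beltrami coefficients carried by the Julia set -- non-zero exactly when $f$ supports a measurable invariant line field on $J(f)$. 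Now $\mathcal{X}$, being open in $\mathrm{Rat}_d$, has complex dimension $2d-2$ modulo M\"obius, whereas $\mathrm{Teich}(F(f)/f)$ is built from the finitely many attracting basins of $f$ and the critical orbits they capture, and the bookkeeping of McMullen and Sullivan shows that its dimension attains $2d-2$ only when all $2d-2$ critical points (counted with multiplicity) are so captured -- that is, only when $f$ is Axiom~A. Hence a wild critical point would force $M(J(f),f)\neq 0$, so that $f$ carries an invariant line field on its Julia set; by the hypothesis $f$ would then be a Latt\`es map, contradicting observation~(1). Therefore $f$ has no wild critical point, $f$ is Axiom~A, and the proof is complete.

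The main obstacle is exactly the dimension comparison inside the third paragraph: constructing the Teichm\"uller theory of rational maps, establishing the product decomposition -- in particular that structural stability removes any ``foliated'' rotation-domain factor -- and, above all, proving that a critical point not absorbed by the Fatou dynamics really does consume a deformation parameter that only the Julia line-field factor $M(J(f),f)$ can supply. That last implication is the genuine content of \cite{MR1620850}: it is what converts ``$f$ cannot be approximated by Axiom~A maps of degree $d$'' into ``$f$ carries a measurable invariant line field on its Julia set.'' The remaining ingredients -- density of structurally stable maps, the $\lambda$-Lemma and holomorphic-motion machinery, the finiteness of the Fatou contribution via no wandering domains, and the instability of rotation domains, parabolic cycles and periodic critical points -- are all available from the results quoted above.
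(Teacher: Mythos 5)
Your overall architecture is the one the survey itself points to: reduce, via density of structural stability (Theorem~\ref{thm:densstrstab}), to showing that a structurally stable rational map with no invariant line field on its Julia set is Axiom~A, and obtain that from the Teichm\"uller-space decomposition and dimension count of \cite{MR1620850}. Since the paper states the theorem only with that citation, deferring the product decomposition and the formula $\dim\mathrm{Teich}(f)=2d-2$ to \cite{MR1620850} is acceptable, and your use of it (no Herman rings and no line field force all $2d-2$ critical grand orbits to live in attracting basins, hence Axiom~A) is the right count.

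There is, however, a genuine gap in how you close the contradiction. In observation~(1) you assert that a structurally stable map has no critical point whose orbit meets $J(f)$, ``destroyed by a generic perturbation''. No perturbation argument gives this: under a conjugacy on the whole sphere a critical point in the Julia set is simply carried to a critical point in the Julia set of the nearby map, so there is no immediate conflict with stability, and indeed ruling out critical orbits accumulating on $J$ for stable maps is precisely the hard content that your later dimension count is designed to supply. The over-claim would be harmless (since you redo the work via the count) except that it is exactly what your final step leans on: the contradiction ``$f$ would be Latt\`es, contradicting observation~(1)'' is justified there only by ``the Julia set of a Latt\`es map is the whole sphere'' plus the unproved no-critical-point-on-$J$ claim, so as written the argument is circular at the last line. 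You need an independent reason why a structurally stable map cannot be a (flexible) Latt\`es map. Two standard fixes: (a) Latt\`es maps are postcritically finite, so structural stability would make these critical orbit relations persistent on an open set, forcing an open set of postcritically finite maps, which is impossible because such maps lie in a countable union of proper analytic subvarieties of the space of degree-$d$ rational maps; or (b) transport the invariant line field of $f$ through the quasiconformal conjugacies to every map in the open component, conclude by the hypothesis that the whole component consists of Latt\`es maps, and contradict the fact that Latt\`es families have dimension strictly less than $2d-2$. With either repair inserted, the rest of your outline goes through.
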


 \subsection{Monotonicity of entropy and the pullback argument} 

Another problem which was extensively studied in the early 1980's was whether
the topological entropy of the family $f_a\colon [0,1]\to [0,1]$, $a\in [0,4]$ defined by $f_a(x)=ax(1-x)$ is a monotone function in $a$. 
This problem was solved by several people independently and using different methods. For a history of this problem see \cite{MR4115082}.  Dennis' approach was particularly important
because it became a key ingredient in the proof of density of hyperbolicity within interval maps, see Theorem~\ref{thm:realfatou} below.

Monotonicity follows immediately from the following: 

\begin{theorem}[Monotonicity of entropy] \label{thm:rigiditycritfinite} 
Consider the family $f_a\colon [0,1]\to [0,1]$, $a\in [0,4]$ defined by $f_a(x)=ax(1-x)$.
Then no periodic orbit disappears as $a$ increases. 

If $f_a,f_{a'}$ are two such maps which are topologically conjugate and whose critical points are eventually
periodic, then $a=a'$.
\end{theorem}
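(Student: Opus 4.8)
The plan is to establish the second (rigidity) assertion first, by the \emph{pullback argument} --- this is where all the content lies --- and then to deduce the first assertion from it. Monotonicity of topological entropy is then immediate: the entropy depends continuously on $a$ in the quadratic family (Milnor--Thurston, Misiurewicz) and is a nondecreasing function of the kneading sequence $\nu(a)$ in the standard order, so it suffices to know that $a\mapsto\nu(a)$ is monotone, which is exactly what ``no periodic orbit disappears'' encodes (the set of itinerary types of periodic orbits of $f_a$ is determined by $\nu(a)$, and is nested increasing in $a$ precisely when $\nu$ is monotone).

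First I would reduce the rigidity statement to a complex-analytic one. Via the standard affine reparametrization, $f_a$ is conjugate to $z\mapsto z^2+c(a)$, and on the sub-range of $[0,4]$ on which the critical point is preperiodic this reparametrization is a monotone bijection onto its image (the degenerate endpoints $a=0$, the constant map, and $a=4$, the Chebyshev map $z^2-2$, are treated directly; the latter is rigid for elementary reasons and has maximal entropy $\log 2$). If $f_a$ and $f_{a'}$ are topologically conjugate on $[0,1]$ with preperiodic critical points, then $f_c$ and $f_{c'}$ are postcritically finite, and the kneading-theoretic data preserved by the conjugacy determines each of them up to Thurston equivalence as a degree-$2$ branched self-covering of $\bar\C$ with marked postcritical set. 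So it is enough to show: \emph{Thurston equivalent postcritically finite quadratics are affinely conjugate} --- whence $c=c'$ and therefore $a=a'$.

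Now the pullback argument. Let $\phi_0\colon\bar\C\to\bar\C$ be a quasiconformal homeomorphism realizing the Thurston equivalence, carrying the finite postcritical set $P=P_c$ onto $P'=P_{c'}$. Using that $\phi_0\circ f_c$ and $f_{c'}$ are degree-$2$ branched covers with matching branch data, lift inductively: there is a unique homeomorphism $\phi_n$ with $f_{c'}\circ\phi_n=\phi_{n-1}\circ f_c$ and $\phi_n=\phi_0$ on $P$, isotopic to $\phi_{n-1}$ rel $P$. Pulling back the standard conformal structure of the target by $\phi_n$ defines a point $\tau_n$ of the finite-dimensional Teichm\"uller space $\mathcal T(\bar\C,P)$, and the relation above gives $\tau_n=\sigma^n(\tau_0)$, where $\sigma$ is the holomorphic \emph{Thurston pullback map} associated to $f_c$. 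By the Schwarz--Pick lemma --- the same contraction principle that drives the $\lambda$-Lemma --- $\sigma$ does not increase the Teichm\"uller (Kobayashi) metric; and since a postcritically finite quadratic other than $z^2$ and $z^2-2$ has a \emph{hyperbolic orbifold}, there is no Thurston obstruction, $\sigma$ is strictly contracting on a suitable $\sigma$-invariant compact set, and $\tau_n$ converges to the unique fixed point $\tau_*$. The conformal structure at $\tau_*$ is $f_c$-invariant, hence standard, and the limiting homeomorphism it produces conjugates $f_c$ to $f_{c'}$ and is conformal; since it fixes $\infty$ it is affine, and a conformal conjugacy between polynomials of the form $z^2+c$ is the identity, so $c=c'$.

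Finally I would deduce ``no periodic orbit disappears.'' Suppose an orbit of a given itinerary type $\tau$ disappeared as $a$ increased through some $\beta$. In the quadratic family periodic orbits are created only in period-doubling or saddle-node bifurcations and destroyed only in saddle-nodes, so $f_\beta$ has a parabolic orbit of type $\tau$ with multiplier $+1$, and for $a$ slightly below $\beta$ a pair of type-$\tau$ orbits, one attracting. By Singer's theorem ($Sf_a<0$) the attracting orbit absorbs the critical point, so $(\beta-\varepsilon,\beta)$ sits in a hyperbolic window whose center $a^\circ$ is a postcritically finite parameter carrying a superattracting critical cycle of type $\tau$; the kneading sequence of $f_{a^\circ}$ is then determined by $\tau$ alone. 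A local computation at the endpoints of such windows shows that $\nu(a)$ changes monotonically across them, while the rigidity just proved guarantees that each combinatorial type is realized by a \emph{single} such parameter, so no window ever recurs; the local monotonicity therefore globalizes to monotonicity of $a\mapsto\nu(a)$ on the dense set of postcritically finite parameters, hence --- by semicontinuity of $\nu$ --- on all of $[0,4]$. In particular no orbit of type $\tau$ can disappear, and monotonicity of entropy follows. \textbf{The hard part} is the contraction-and-convergence step in the pullback argument: proving that $\sigma$ genuinely contracts and that $(\tau_n)$ converges --- equivalently, that there is no Thurston obstruction. This is precisely the content of Thurston's characterization theorem for postcritically finite branched coverings, and for quadratics it turns on the dichotomy between hyperbolic and flat orbifolds; by comparison, the combinatorial bookkeeping needed to pass from rigidity to ``no periodic orbit disappears'' is lengthy but routine.
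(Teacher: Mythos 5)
Your proposal is correct in outline, but it proves the rigidity half by a genuinely different route than the paper. You reduce to Thurston's characterization/rigidity theorem for postcritically finite branched covers: extend the interval conjugacy to a sphere homeomorphism realizing a Thurston equivalence, iterate the holomorphic pullback map $\sigma$ on the finite-dimensional Teichm\"uller space of the marked sphere, and use strict contraction (hyperbolic orbifold, no obstruction) to get an affine conjugacy, hence $c=c'$. The paper acknowledges this route in one line (``follows immediately from Thurston's famous theorem''), but the proof it actually presents is Sullivan's own: the quasiconformal pullback argument (homotopy lifting of an initial qc map $h_0$ matching the postcritical sets, constancy of the dilatation under lifting, compactness of normalized qc maps, and absence of invariant line fields to upgrade qc to conformal), followed by an \emph{open--closed argument} in the parameter line --- take a maximal interval $[a,a']$ of topologically conjugate postcritically finite parameters, use the qc conjugacy's Beltrami coefficient and the MRMT to build an analytic Beltrami path $h_t$, $|t|\le 1+\epsilon$, of qc maps whose conjugates $g_t=h_t\circ f_a\circ h_t^{-1}$ are again quadratic, and contradict maximality. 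What each approach buys: yours outsources the hard analysis to a single big theorem (and you correctly flag the contraction/no-obstruction step as the crux, and correctly exclude the flat-orbifold cases $z^2$ and $z^2-2$), whereas Sullivan's argument is self-contained modulo the MRMT and, crucially, transfers verbatim to polynomial-like maps and to the quasisymmetric-rigidity formulation --- which is exactly why the paper emphasizes it as the seed of the later proofs of real density of hyperbolicity and of the renormalisation theory. Two small points in your write-up deserve explicit mention if expanded: the passage from a conjugacy on $[0,1]$ to a Thurston equivalence on $\widehat{\mathbb{C}}$ (extension of the conjugacy to the sphere respecting isotopy data rel the postcritical set) is standard but not automatic, and your deduction of ``no periodic orbit disappears'' via kneading sequences, Singer's theorem and hyperbolic windows is the standard bookkeeping the paper leaves implicit, so it is a reasonable (if sketchy) complement rather than a divergence.
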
 

This theorem is non-trivial. Indeed, it is not known whether within the 
 family $x\mapsto x^d + c$ with $d>1$ fixed but not necessarily an integer, 
 bifurcations are monotone in $c$. Partial results, and monotonicity for 
 other non-trivial families of intervals maps,  are given in  \cite{MR4115082}.
One also has monotonicity within families of real polynomials of higher degree, namely 
each set of parameter for which the topological entropy is constant is connected. 
For the case of real cubic critical polynomials, see \cite{MR1736945} and for the general case 
of real polynomials with all critical points real, see 
\cite{MR3264762} and also \cite{MR3999686}.

 The rigidity statement in the second part of Theorem~\ref{thm:rigiditycritfinite} 
follows immediately from Thurston's famous theorem, see \cite{MR1251582}.
 The approach proposed by Dennis uses the {\em pullback argument}. 
 This argument  is formalised in the following theorem  and also applies
 to the setting of polynomial-like maps discussed below. 
 Let $P(f)$ be the closure of the forward iterates of critical points of $f$.

\begin{theorem}[Pullback argument] 
Let $h_0$ be a quasiconformal homeomorphism so that  
\begin{enumerate}
\item  $h_0(P(f_a))=P(f_{a'})$,  
\item there exists a qc map 
$h_1$ for which $f_{a'} \circ h_1 = h_0\circ f_{a}$ so that 
$h_1=h_0$ on $P(f_a)$ and  which is homotopic to 
$h_0$ rel. $P(f_a)$ and   
\item $h_0$ is a conformal conjugacy between $f_a$ and $f_{a'}$
near $\infty$ (and near their periodic attractor if they exist). 
\end{enumerate} 
Then there exists a qc homeomorphism $h$ so that $f_{a'} \circ h =  h\circ f_{a}$.
If $h$ is conformal near all periodic attractors (if they exist)
and if $f_a$ does not carry an invariant line field on $J(f_a)$
then  $h$ is conformal. 
\end{theorem}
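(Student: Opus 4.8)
The plan is to run the \emph{pullback construction}. Starting from $h_0$ and the lift $h_1$ supplied by hypothesis~(2), one builds inductively quasiconformal homeomorphisms $h_n\colon\overline\C\to\overline\C$ with $f_{a'}\circ h_{n+1}=h_n\circ f_a$, and one shows that they converge to the desired conjugacy. At each stage the lift exists because the functional equation $f_{a'}\circ h_n=h_{n-1}\circ f_a$ forces $h_n$ to carry $\mathrm{Crit}(f_a)$ bijectively onto $\mathrm{Crit}(f_{a'})$ with matching local degrees (and the critical values of $f_a$, which lie in $P(f_a)$, onto those of $f_{a'}$), so that $h_n\circ f_a$ factors through $f_{a'}$; among the finitely many lifts one picks, as in~(2), the one homotopic to $h_n$ rel $P(f_a)$, which also forces $h_{n+1}=h_0$ on $P(f_a)$. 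The first key observation is that \emph{lifting does not increase dilatation}: since $f_a$ and $f_{a'}$ are holomorphic, hence conformal off their finite critical sets, the Beltrami coefficient of $h_{n+1}$ equals almost everywhere that of $h_n$ transported by these holomorphic maps, whence $\|\mu_{h_{n+1}}\|_\infty=\|\mu_{h_n}\|_\infty$. Thus every $h_n$ is $K$-quasiconformal with the same $K=K(h_0)$.

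Next I would prove convergence. An induction on $n$ shows that $h_{n+1}$ and $h_n$ agree on $f_a^{-n}(P(f_a))$: if $h_n=h_{n-1}$ on a set $S$, then for $z\in f_a^{-1}(S)$ we get $f_{a'}(h_{n+1}(z))=h_n(f_a(z))=h_{n-1}(f_a(z))=f_{a'}(h_n(z))$, and the homotopy normalization forces $h_{n+1}(z)=h_n(z)$; one starts from $h_1=h_0$ on $P(f_a)$. Since $P(f_a)$ is forward invariant the sets $f_a^{-n}(P(f_a))$ increase, and the same lemma applied to an $f_a$-invariant neighborhood of each periodic attractor on which $h_0$ is a conjugacy shows that $(h_n)$ also stabilizes on the full basin of every such cycle. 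Hence $(h_n)$ converges pointwise on the set $D$ formed by $\bigcup_{n\ge0}f_a^{-n}(P(f_a))$ together with all those basins; as $P(f_a)$ contains a non-exceptional point the first part of $D$ is dense in $J(f_a)$, and as every Fatou component is eventually periodic by Sullivan's no-wandering Theorem~\ref{thm:nowandering} and — in the settings where this result is used — every periodic Fatou component is such a basin, $D$ is dense in $\overline\C$. A family of $K$-quasiconformal homeomorphisms of $\overline\C$ all coinciding on the three-or-more points of $P(f_a)$ is equicontinuous, so pointwise convergence on the dense set $D$ upgrades to locally uniform convergence $h_n\to h$, with $h$ a $K$-quasiconformal homeomorphism — not a constant, since $h=h_0$ on $P(f_a)$. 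Passing to the limit in $f_{a'}\circ h_{n+1}=h_n\circ f_a$ yields $f_{a'}\circ h=h\circ f_a$, which is the first assertion.

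For the second assertion, let $\mu=\mu_h$, so $\|\mu\|_\infty<1$. Because $f_a$ and $f_{a'}$ are holomorphic, the relation $f_{a'}\circ h=h\circ f_a$ forces $\mu$ to be $f_a$-invariant almost everywhere, i.e.\ $(\mu\circ f_a)\,\overline{f_a'}/f_a'=\mu$ wherever $f_a'\neq0$; in particular the measurable line field determined by $\mu$ on $E=\{\mu\neq0\}$ is carried to itself by $f_a$. On the Fatou set, $h$ coincides with $h_0$ near every periodic attractor and is therefore conformal there; propagating this along the pullback (again $h=f_{a'}^{-1}\circ h\circ f_a$ on preimages) shows $h$ is conformal on every attracting basin, hence on all of the Fatou set in the relevant settings, so $\mu=0$ a.e.\ on it. Thus $E$ is a measurable, $f_a$-invariant subset of $J(f_a)$; if $|E|>0$, the line field determined by $\mu$ would be an invariant measurable line field on $J(f_a)$, contrary to hypothesis. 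Hence $|E|=0$, i.e.\ $\mu\equiv0$, so $h$ is conformal — a M\"obius transformation.

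The step I expect to be the real crux, and where the hypotheses are spent, is the convergence argument: one needs the canonical lifts to be well defined and consistently normalized — this is exactly what the homotopy condition in~(2) and the conformality condition in~(3) provide — so that consecutive iterates agree on the increasing exhaustion $\bigcup_n f_a^{-n}(P(f_a))$ and on the attracting basins, and then one must upgrade pointwise convergence on this dense set to uniform convergence of the \emph{whole} sequence via equicontinuity of uniformly quasiconformal families, so that the limit genuinely intertwines $f_a$ and $f_{a'}$ rather than being a mere cluster value. By comparison, the invariance of dilatation under pullback is immediate from the holomorphy of $f_a$ and $f_{a'}$, and the no-invariant-line-fields step is a short measure-theoretic deduction.
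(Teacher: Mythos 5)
Your proposal follows essentially the same route as the paper's own (very terse) proof: lift $h_0$ repeatedly via the homotopy/lifting condition to get $h_n$ with $f_{a'}\circ h_{n+1}=h_n\circ f_a$, note that holomorphy of $f_a,f_{a'}$ preserves the dilatation, use agreement on the increasing dense sets $f_a^{-n}(P(f_a))$ (plus the basins) together with compactness/equicontinuity of $K$-qc maps to pass to a qc limit conjugacy, and conclude conformality from the absence of invariant line fields. Your write-up simply supplies the details the paper leaves implicit, so it is correct and matches the intended argument.
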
 
\begin{proof} By the  Homotopy Lifting Theorem, there exists a sequence of  
 homeomorphism $h_n$ so that 
$f_{a'} \circ h_{n+1} = h_n \circ f_{a} $ which are homotopic to $h_0$ rel. $P(f_{a})$. 
Since $f_a$ and $f_{a'}$ are conformal,   $h_{n+1}$ will have the same qc dilatation as $h_n$. 
Moreover, $h_{n+1}$ agrees with $h_n$ on a set $F_n$ so that $F_{n+1}\supset F_n$
so that $\cup F_n$ is dense in $\bar C$. Since the space of qc maps is compact, $h_n$ converges to a 
qc map $h$. If the Julia set of $f_a$ has zero Lebesgue measure (or, even more generally,  
does not carry invariant line fields) then $h$ is conformal. 
\end{proof} 

One can deduce  Theorem~\ref{thm:rigiditycritfinite} quite easily from the pullback argument, using
the {\em open-closed argument}. Indeed, assume by contradiction that the conclusion of Theorem~\ref{thm:rigiditycritfinite} 
is wrong. Then there exists two topologically conjugate post-critically  finite real quadratic maps $f_a$ and
$f_{a'}$ with $a\ne a'$. Choose $[a,a']$ {\lq}maximal{\rq} i.e., so that there exists no real $a''\notin [a,a']$
for which $f_{a''}$ is a real quadratic map which again is topologically conjugate to $f_a$.
The pullback argument implies that  $f_a$ and $f_{a'}$  are in fact quasiconformally conjugate. 
Let $h$ be a qc-conjugacy so that $f_{a'} \circ h =h\circ f_a$
and let $\mu$ be its Beltrami coefficient. Then the MRMT gives a (normalised) family of qc maps $h_t$ whose 
Beltrami coefficient is $t\mu$ for $|t|\le 1+\epsilon$, provided $\epsilon>0$ is small. 
A simple calculation then shows that each of the maps $g_t:=h_t\circ f_a\circ h_{t}^{-1}$
is again conformal. This is because $h_t$ sends the ellipse field determined by $t\mu$ to a field 
of circles,   and the invariance of the Beltrami coefficient implies that this ellipse field is preserved
by $Df$. Moreover,   $g_t$ depends analytically on $t$. That $g_t$ is  in fact is quadratic
follows from the degree of the map (and a suitable normalisation of $h_t$).  It follows that there exists $\epsilon>0$ 
so that for $s\in (a-\epsilon,a'+\epsilon)$ each  $f_s$ is quasiconformally conjugate to $f_a$. 
But this contradicts the maximality of the choice of $[a,a']$. 

This proof is very interesting because it makes it possible to reduce density of hyperbolicity 
with real quadratic maps to quasisymmetric rigidity in the unicritical setting, see Section~\ref{sec:realfatou}.
In fact, even if in the context of map with several critical points, quasisymmetric rigidity can be used
to derive density of hyperbolicity,  
see \cite{MR2533666, MR2342693, MR3336841}.

Another reason that makes  Dennis' proof of  Theorem~\ref{thm:rigiditycritfinite}  so interesting is that it 
also applies to the following setting,  introduced
by Douady and Hubbard \cite{MR812271}.

\begin{definition} Assume that $U$ and $V$ are simply connected
domains in $\C$. Then a holomorphic map
$F\colon U\to V$ is called
{\it quadratic-like}\index{quadratic-like}
if the closure of $U$ is contained
in $V$ and if there exists a unique critical point $c$ of $F$
such that $F$ restricted to $U\setminus \{c\}$
is a covering map of degree two onto $V\setminus \{F(c)\}$.
The subset
$$K(F)= \{z\in U : F^n(z)\in U  \mbox { for all } n\ge 0\}$$
is called  the {\it filled Julia set} of $F$.
\index{Julia set!filled}
\end{definition} 

Any quadratic map is quadratic-like (just
 take $U$ to be some very large disc). Moreover,
by the so-called Straightening Theorem, see  \cite{MR812271},  any quadratic-like
map  is quasiconformally conjugate to a quadratic map.

\medskip
One step in the renormalisation theory developed by Dennis (and also
in subsequent developments) is to show that certain iterates of a given map
have quadratic-like restrictions  $f^n\colon U_n\to V_n$ 
with the additional property that the modulus 
$\mod(V_n\setminus U_n)$ \, is bounded from below uniformly in $n$. 
Such bounds are called {\em a-priori bounds} or {\em complex bounds}.

\subsection{Renormalisation theory for interval maps} 
 
Consider the family $f_a(x)=ax(1-x)$.  A  simple computer simulation shows 
 that this family of maps undergoes a period doubling bifurcation 
from period $2^n$ to period $2^{n+1}$ at some parameter $a_n$. That these parameters
$a_n$ are in fact unique (and increasing) can be deduced from a result similar to 
Theorem~\ref{thm:rigiditycritfinite}.

%
%
%

One of the reasons that iterations of interval maps  attracted so much attention
from the late 1970's  was the observation by Feigenbaum and independently 
by Coullet and Tresser of  metric {\em universality}  
within a wide class of such families. Namely,  it turns out that
the parameters $a_n$ converge to some limit value $a_\infty$
at a particular rate $\delta>1$:
$$\dfrac{a_{n-1}-a_{n-2}}{a_{n}-a_{n-1}} \to \delta=4.669201...$$
Remarkably, if one takes some other 
family such as $f_a(x)=a\sin(\pi x)$ then the corresponding rate is the same! 
Moreover,  the map $f_{a_{\infty}}$ has an invariant Cantor set
and the scaling structure of this Cantor set also displays metric universality. 

Feigenbaum, Coullet and Tresser already suggested a mechanism 
which would be responsible for this universality. The key idea is 
renormalisation. Indeed, there exists a (real) parameter interval
$[u_1,v_1]\ni a_\infty$ so that for each $a\in [u_1,v_1]$
there exists an interval $J^1_a\ni 1/2$ so that $f_a^2(J^1_a)\subset J^1_a$
and so that $f_a(J^1_a)$ and $J^1_a$ have disjoint interiors. 
Such maps are called {\em $2$-renormalisable}.  Note that $1/2$ is the critical point
of $f_a$. 
It turns out that there exists an interval $[u_2,v_2]\subset [u_1,v_1]$
so that for each $a\in [u_2,v_2]$ the map $f^2_a|J^1_a$ (rescaled)
is again $2$-renormalisable.  In other words,  there exists an interval
$J^2_a$ with $1/2\in J^2_a\subset J^1_a$ so that $f_a^4(J^2_a)\subset J^2_a$
and so that $J^2_a,\dots,f^3_a(J^2_a)$ have disjoint interiors. 
Moreover, $J^2_a,f^2_a(J^2_a)\subset J^1_a$ and $f(J^2_a),f^3_a(J^2_a)\subset f(J^1_a)$.
So these maps are twice $2$-renormalisable. Continuing like this, 
for each $k$ there exists an interval  $[u_k,v_k]$ so that for each $a\in [u_k,v_k]$
the map $f_a$ is $2^k$-renormalisable. For $a_\infty \in \cap [u_k,v_k]$
the set 
$$\Lambda_{a_\infty} :=\bigcap_{k\ge 0}\,\, \left( J_a^j\cup \dots \cup  f^{2^k-1}_a(J^k_a) \right)$$
is a Cantor set.  

To formalise this one can define, near the limit map $f_{a_\infty}$, the renormalisation operator 
$$R(f)=f^2|J \mbox{ rescaled }$$
where $J$ is the maximal interval of renormalisation of period two, i.e. the 
maximal interval  so that $f^2(J)\subset J$
and so that $f(J)$ and $J$ have disjoint interiors.  This operator 
is well-defined for all maps which are at least once $2$-renormalisable.  

More generally one has the following

 \begin{definition} A interval map $f$ is {\em renormalisable}
 if there exist $p>0$ and  an interval $J$ around a critical point of $f$
 so that $J,\dots,f^{p-1}(J)$ have disjoint interiors and so that 
 $f^p(J)\subset J$. The operator $R(f)=f^p|J \mbox{ rescaled }$ is then 
 called the renormalisation operator. See figure \ref{figure1}.
 \end{definition} 
 

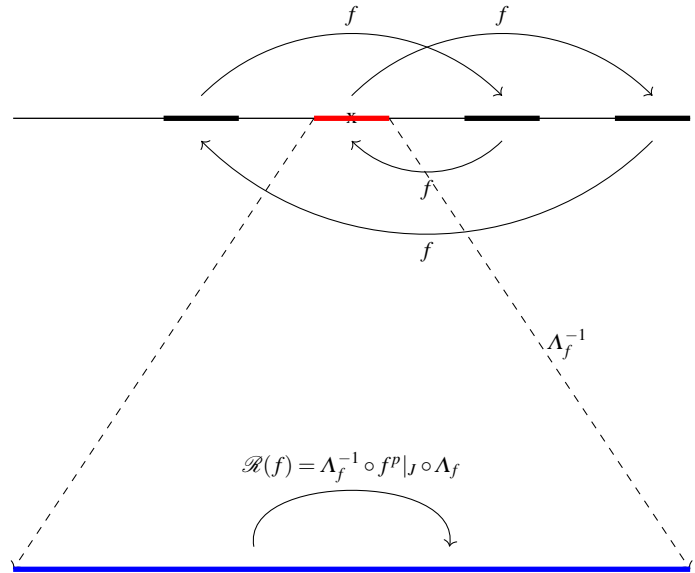
\begin{figure}[ht]
 \begin{center}{
\begin{tikzpicture}[out=45,in=135,relative]
\draw[line width=0.5pt] (0,0) to[line to] node {x} (9,0);
\draw[red,line width=2pt] (4,0) to[line to] (5,0);
\draw[line width=2pt] (2,0) to[line to] (3,0);
\draw[line width=2pt] (6,0) to[line to] (7,0);
\draw[line width=2pt] (8,0) to[line to] (9,0);
\draw[->] (4.5,0.3) to[curve to] node[above] {$f$} (8.5,0.3);
\draw[->] (8.5,-0.3) to[curve to] node[below] {$f$} (2.5,-0.3);
\draw[->] (2.5,0.3) to[curve to] node[above] {$f$} (6.5,0.3);
\draw[->] (6.5,-0.3) to[curve to] node[below] {$f$} (4.5,-0.3);
\draw[->,dashed] (4,0) to[line to] node[left] {} (0,-6);
\draw[->,dashed]  (5,0) to[line to] node[right] {$\Lambda_f^{-1}$} (9,-6);
\draw[blue,line width=2pt] (0,-6) to[line to] (9,-6);
\draw[->,out=100,in=80] (3.2,-5.7) to[curve to]  node[above] {$\mathcal{R}(f)=\Lambda_f^{-1}\circ f^p|_{J}\circ \Lambda_f$} (5.8,-5.7);
\end{tikzpicture}
\caption[figure1]{\label{figure1} Renormalising a unimodal map. Here, $J$ is the red interval, $p=4$, and $\mathcal{R}(f)$ is simply $R(f)$ rescaled by the affine map that takes the blue interval onto the red interval.}
}
\end{center}
\end{figure}


 If $f$ is unimodal, then $f^p|J$ is again a unimodal map
and it makes sense to require that $p$ is minimal and $J$ is maximal
with the above properties.    If $R(f)$ is again renormalisable, 
then we say that $f$ is twice renormalisable. Similarly, $f$ is called {\em  infinitely renormalisable}
if this process can be repeated infinitely often. If the corresponding integers $p_1,p_2,\dots$ are
all bounded by some number $P<\infty$ then $f$ is called infinitely renormalisable
of bounded type.

It turns out that the renormalisation conjectures of  Feigenbaum and 
Coullet \& Tresser of metric universality follow from 
(i) the existence of a fixed point $\psi$ of the operator $R$, (ii) 
that the spectrum of the operator $DR\psi$ lies off the unit circle
and (iii) that $DR\psi$ has a unique expanding eigenvalue. 
The universal parameter scaling constant $\delta$ is equal to this 
expanding eigenvalue.  The universal dynamical scaling structure of $\Lambda_{a_\infty}$ follows
from the largest contracting eigenvalue of $DR$. 

The universality from the Feigenbaum-Coullet-Tresser conjectures
then follows from the fact that for any family which crosses
the stable manifold of $R$ transversally, the parameter scalings $\delta$
and the dynamical scaling can be obtained from the spectrum 
of $DR(\psi)$. 

The existence of the fixed point $\psi$, and an analysis of the spectrum
of the linear map $DR$  were established by Lanford before Dennis started
working on the renormalisation conjectures. 
This was done in part using careful rigorous computer estimates. 
However, there were three limitations to Lanford's results. 

Firstly, Lanford's proof did not establish which maps are contained  in the stable manifold of $R$.
In other words, what remained unclear whether any $2^\infty$-infinitely renormalisable unimodal maps
(with a quadratic critical point) would be in the stable manifold of 
of the period doubling operator. Secondly,  Lanford's proof did not establish 
a conceptual proof of why this result was true. 
Finally, Lanford's proof also did not cover a more general situation of maps
which are infinitely renormalisable of bounded type, but only 
of constant type $p_1=p_2=\dots$.  
%

The huge result which Dennis managed to obtain is the following:

\begin{theorem}[Renormalisation for unimodal interval maps, see \cite{MR1184622} and  also \cite{MR1239171}]
\label{thm:renormalisation} 
There exists a Cantor set $\mathbb{K}_p$ of infinitely renormalisable maps of bounded type $\leq p$, 
of real analytic unimodal maps, which form an invariant subset for the 
corresponding renormalisation operator. The renormalisation operator acts on $\mathbb{K}_p$ as a full shift on finitely many symbols.
Each real analytic unimodal map with a quadratic critical point which is 
infinitely renormalisable map and of bounded type
is in the stable manifold of the corresponding renormalisation operator. 
\end{theorem}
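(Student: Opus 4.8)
The plan is to replace real unimodal maps by their complex extensions as \emph{quadratic-like germs}, so that renormalisation becomes a holomorphic self-map of an infinite-dimensional complex manifold, and then to read off the three assertions from a blend of compactness with a Schwarz-lemma contraction. The first ingredient is the \emph{complex (a-priori) bounds}: if $f$ is real analytic and unimodal with a quadratic critical point and is infinitely renormalisable of bounded type $\le p$, then for all large $n$ the renormalisation $R^{n}f$ admits a quadratic-like extension $f^{p_n}\colon U_n\to V_n$ with $\mod(V_n\setminus U_n)\ge\mu_0>0$, where $\mu_0$ depends only on $p$. This is where the real bounds of one-dimensional dynamics --- the Koebe principle and cross-ratio control --- together with the Schwarz inclusion property of Poincar\'e discs discussed above enter, and where the quadraticity of the critical point is used to keep the extensions of degree two. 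Granting the bounds, after normalising representatives the space $\mathcal{Q}_{\mu_0}$ of real quadratic-like germs of modulus $\ge\mu_0$ (up to affine conjugacy) is \emph{compact} by Montel's theorem, so the forward orbit of any such $f$ eventually lies in $\mathcal{Q}_{\mu_0}$ and is precompact; the Douady--Hubbard straightening theorem will be used to translate the combinatorial data below into that of quadratic polynomials.

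Next I would fix the combinatorial bookkeeping. To a renormalisable unimodal germ one attaches its \emph{renormalisation type}: the period $p_i\le p$ together with the unimodal permutation describing the cyclic order of $J,f(J),\dots,f^{p_i-1}(J)$. Since $p_i\le p$ there are only finitely many types, forming a finite alphabet $\mathcal{A}$. Define $\mathbb{K}_p\subset\mathcal{Q}_{\mu_0}$ to be the set of germs that are \emph{bi-infinitely} renormalisable of type $\le p$ --- equivalently, the union of the $\omega$-limit sets of the renormalisation orbits of all such $f$. It is closed (the bounds survive limits) and $R$-invariant, and the \emph{itinerary map} $\psi\mapsto\big(\text{type of }R^{i}\psi\big)_{i\in\mathbb{Z}}$ intertwines $R|_{\mathbb{K}_p}$ with the two-sided full shift on $\mathcal{A}$; surjectivity of this map follows by realising any prescribed string of types with a real unimodal map from the quadratic family (admissibility of kneading sequences) and passing to a subsequential limit to prescribe the backward history as well. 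Injectivity and continuity --- hence that the itinerary map is a conjugacy --- I postpone to the contraction estimate.

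The heart of the matter, and the genuinely new contribution over Lanford's computer-assisted fixed-point analysis, is the \emph{rigidity} assertion that the renormalisations of two germs with the same forward itinerary converge together. The mechanism is that $R$ contracts the Teichm\"uller (equivalently Kobayashi) metric on germ space: renormalising throws away the definite collar $V_n\setminus U_n$, so $R$ factors through an inclusion $\mathcal{Q}_{\mu_0}\hookrightarrow\mathcal{Q}_{\mu_1}$ of a smaller germ space into a strictly larger one ($\mu_1<\mu_0$) with relatively compact image, and by the Schwarz lemma such an inclusion strictly decreases the Kobayashi metric. Because the relevant orbits remain in the compact set $\mathcal{Q}_{\mu_0}$, this pointwise contraction upgrades to a uniform one: there are $C>0$ and $\theta\in(0,1)$, depending only on $p$, with
\[
\mathrm{dist}_{T}\big(R^{n}f,\,R^{n}g\big)\;\le\;C\,\theta^{\,n}
\]
for all $f,g$ infinitely renormalisable of type $\le p$ sharing the same itinerary. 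I expect this step --- the precise construction of the complex Banach manifold of germs on which $R$ is holomorphic, together with the Schwarz-plus-compactness contraction --- to be the principal technical obstacle; it is exactly the place where one is forced to pass to complex extensions in the first place.

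With the contraction in hand the theorem falls out. Injectivity of the itinerary map is immediate ($\mathrm{dist}_{T}=0$ forces equal germs), combinatorial rigidity of nearby infinitely renormalisable germs gives its continuity, and compactness then makes it a homeomorphism; hence $R|_{\mathbb{K}_p}$ is topologically conjugate to the two-sided full shift on $\mathcal{A}$, and $\mathbb{K}_p$ is a Cantor set whenever $\mathcal{A}$ has more than one letter. For the stable-manifold statement, an infinitely renormalisable real-analytic $f$ of bounded type with quadratic critical point has the same forward itinerary as a unique $\psi\in\mathbb{K}_p$, and the estimate gives $\mathrm{dist}_{T}(R^{n}f,R^{n}\psi)\to0$, so $f$ lies in the stable set of $\mathbb{K}_p$. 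To upgrade this to the sharpest form --- that $f$ and $\psi$ are in fact quasiconformally conjugate, with controlled geometry on their Feigenbaum Cantor attractors --- I would feed the decay of $\mathrm{dist}_{T}(R^{n}f,R^{n}\psi)$ into the pullback argument formalised above: the complex bounds supply quadratic-like extensions with definite moduli at every level, the Teichm\"uller distance bounds the dilatation of the level-$n$ conjugacies, and passing to the limit $n\to\infty$ assembles the desired quasiconformal conjugacy.
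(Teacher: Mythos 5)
Your overall architecture (complex bounds $\Rightarrow$ compactness of normalized quadratic-like germs $\Rightarrow$ symbolic coding by types $\Rightarrow$ metric contraction along conjugacy classes) departs from the proof the paper sketches: Sullivan works with Beltrami paths between quadratic-like germs, passes to the quotient Riemann surface laminations $X_F=\mathcal{L}(F)/F_\infty$, builds a Teichm\"uller space of such laminations, and extracts contraction from his non-coiling and almost-geodesic principles. Your contraction mechanism --- a Schwarz-lemma/Kobayashi argument on germ spaces --- is instead the one underlying the later proofs of McMullen \cite{MR1312365,MR1401347} and Avila--Lyubich \cite{MR2854860}. That route is legitimate in principle, but it is precisely where your proposal has a genuine gap: the sentence asserting that the relatively compact inclusion strictly decreases the Kobayashi metric and that ``compactness upgrades the pointwise contraction to a uniform one'' with $\mathrm{dist}_T(R^nf,R^ng)\le C\theta^n$ is the entire theorem, and as stated it does not follow.

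Concretely, three things are missing. First, the Teichm\"uller distance is only finite between germs that are quasiconformally (hybrid) conjugate, so before any metric argument can begin you must know that two real germs with the same itinerary are qc conjugate; for real maps this is the quasisymmetric rigidity theorem, obtained from the real bounds via the pullback argument. You invoke the pullback argument only at the very end as an ``upgrade'', whereas it is an input to the contraction step. Second, a conjugacy (hybrid) class of quadratic-like germs is not visibly a complex manifold on which $R$ acts holomorphically and on which the Kobayashi and Teichm\"uller metrics can be compared; supplying such a structure is the substantial construction in the later proofs, and in Sullivan's proof its role is played by the Teichm\"uller theory of the laminations $X_F$ (made possible by the absence of invariant line fields on these Julia sets). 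Third, pointwise strict contraction together with precompactness of the orbit does not by itself give a rate $\theta<1$ uniform in the pair of maps: the local contraction factor can degenerate, and controlling it is exactly the content of McMullen's towers and of Avila--Lyubich's quantitative Schwarz-lemma (``beau bounds'') arguments. Indeed, Sullivan's own almost-geodesic principle yields only weak contraction --- sufficient for the stable-manifold statement --- and the exponential estimate you write down was obtained only in those subsequent works. So either carry out the lamination/non-coiling argument, or be prepared to construct the holomorphic structure on hybrid classes and prove the uniform Schwarz estimate; as it stands the heart of the proof is asserted rather than proved.
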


After this,  other proofs of the renormalisation conjectures appeared. 
McMullen \cite{MR1312365, MR1401347}
and Avila and Lyubich \cite{MR2854860}  
gave easier proofs that the stable manifold of the renormalisation operator
contains all the relevant infinitely renormalisable maps using {\lq}towers{\rq}
respectively based on a Schwarz Lemma. 
These proofs additionally  give that  under renormalisation infinitely renormalisable 
maps of bounded type converge with an exponential rate to the above Cantor
set of  infinitely renormalisable maps of bounded type. 
Moreover, Lyubich \cite{MR1689333}  gave a conceptual proof showing that
the renormalisation operator has a unique expanding eigenvalue. 

The above proofs require that the maps are real analytic. 
In the $C^r$ context, these theorems also go through, see \cite{MR2259245}. 
An alternative approach to extend the renormalisation theory for real analytic maps
to smooth maps is via asymptotically holomorphic maps, see 
\cite{clark_de_faria_van_strien_2022}.

In the multimodal setting, the renormalisation picture is not complete yet. 
In this setting, the conjecture could be 
\begin{enumerate}
\item Topologically conjugate mappings converge exponentially quickly under renormalisation.
\item The stable manifolds of renormalisation are smooth.
\item The transverse directions to the stable manifolds are exponentially expanded by renormalisation.
\end{enumerate} 
Part 2 of this conjecture has been proved in \cite{Clark-Strien}. For the case of bounded combinatorics, 
see \cite{MR4045962}.

\subsection{Real and complex bounds} 
The first step towards proving Theorem~\ref{thm:renormalisation} is to show that 
the space of renormalisable maps is compact. To do this, Dennis 
established apriori bounds, first in the real and then in the complex setting: 


\begin{theorem}[Real Bounds] 
Let $f$ be a real analytic map which is infinitely renormalisable and of bounded type.  
Then the $C^2$ norm of the maps $R^nf$ is
uniformly bounded.  
\end{theorem}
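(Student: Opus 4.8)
The plan is to deduce the $C^2$ bound on $R^nf$ from two essentially separate ingredients: a \emph{geometric} control of the "renormalisation cycle" of $f$ that is uniform in the level $n$ (this is what the real bounds really are about), and a \emph{distortion} principle converting such geometry into analytic control of the rescaled map. Write $q_n=p_1\cdots p_n$ for the $n$-th renormalisation period, $J_n\ni c$ for the $n$-th renormalisation interval ($c$ the critical point of $f$), and recall that the intervals $J_n,f(J_n),\dots,f^{q_n-1}(J_n)$ are pairwise disjoint and that $|J_n|\to 0$ (by absence of wandering intervals, applicable since $f$ is real analytic, hence $C^{1+Zygmund}$ with non-flat critical point). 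Conjugating $f^{q_n}|_{J_n}$ by the affine map $\Lambda_n$ sending $J_n$ onto $[-1,1]$ and factoring
\[
f^{q_n}|_{J_n}\;=\;\Psi_n\circ f|_{J_n},\qquad \Psi_n\;=\;f|_{f^{q_n-1}(J_n)}\circ\cdots\circ f|_{f(J_n)}\,,
\]
exhibits $\Psi_n$ as a composition of $q_n-1$ \emph{diffeomorphic} branches of $f$ along a cycle of pairwise disjoint intervals, while $f|_{J_n}$ carries the single critical point. Hence it suffices to bound, uniformly in $n$: the distortion of $\Psi_n$; the ratios $|f^j(J_n)|/|J_n|$, so that the intermediate affine rescalings are mutually comparable; and the affinely rescaled unimodal branch coming from $f|_{J_n}$.

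First I would establish the geometric part. The engine is the \emph{cross-ratio inequality} for the fixed map $f$: since $f$ is $C^3$ (being analytic), the distortion of cross-ratios under any chain of branches of iterates of $f$ with pairwise disjoint images is bounded by a constant $C_0=C_0(f)$, the key being that the total error is controlled by $\sum_j|f^j(J_n)|^2$, uniformly bounded because the cycle is disjoint in the dynamical interval. This is the substitute, valid with \emph{no} sign hypothesis on the Schwarzian, for Singer's negative-Schwarzian estimates. Feeding it into the \emph{smallest interval argument}: among the cycle intervals pick a shortest one $I_k$; its two neighbours on the line are longer, so $I_{k+1}$ together with a definite proportion of its neighbours pulls back through the diffeomorphic branches of the cycle — whose accumulated cross-ratio distortion is $\le C_0$ precisely because those images are disjoint — showing that $I_0=J_n$ sits well inside a larger interval with \emph{Koebe space}. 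Propagating this from one renormalisation level to the next, where the bounded-type hypothesis $p_i\le P$ caps the combinatorial length of each transition and so keeps the constants from deteriorating, yields a universal $\delta_0=\delta_0(f,P)>0$ such that the cycle has \emph{$\delta_0$-bounded geometry}: each $f^j(J_n)$ contains the corresponding pieces of the level-$(n+1)$ cycle together with a $\delta_0$-definite collar, and in particular $\delta_0\le|J_{n+1}|/|J_n|<1$.

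From $\delta_0$-bounded geometry the analytic bound follows quickly. Each factor $f|_{f^j(J_n)}$ of $\Psi_n$ extends to a diffeomorphic branch of $f$ on an interval containing $f^j(J_n)$ with a collar of definite size still disjoint from the remaining cycle; composing and invoking the \emph{Koebe Principle} bounds the distortion of $\Psi_n$ by some $K=K(\delta_0)$, independent of $n$, so after the affine rescalings $\Psi_n$ has bounded $C^1$ norm and bounded variation of $\log|D\Psi_n|$; the $C^2$ bound follows from the same bounded-geometry input together with the cross-ratio/Schwarzian machinery for $C^3$ maps — in the analytic category one may instead pass to a genuine complex collar and apply Cauchy's estimate, which is the route that matures into the complex bounds of Theorem~\ref{thm:complexbounds}. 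For the unimodal branch, since $|J_n|\to 0$ and $f$ is non-flat at $c$, the affinely rescaled $f|_{J_n}$ stays within a $C^2$-bounded family (uniformly close to an affine image of $x\mapsto(x-c)^d$, $d$ the order of $c$). Composing the two pieces gives the claimed uniform $C^2$ bound on $R^nf$; the details are carried out in \cite[Ch.~VI]{MR1239171}.

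The crux — and the genuinely hard step — is obtaining the definite geometry uniformly in $n$ for a real-analytic map with no a priori sign condition on its Schwarzian: one must run the smallest-interval/Koebe bootstrap so that its constants survive infinitely many renormalisation levels, which is exactly what forces the use of the scale-invariant cross-ratio inequality (with its summable error along the disjoint cycle) and the essential appeal to bounded type to bound the length of each inter-level transition. By contrast, the passage from bounded distortion to the full $C^2$ estimate is a comparatively soft analytic point, though it is the reason the sharpest form of the statement is naturally phrased through a complex-analytic extension.
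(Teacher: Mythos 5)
Your proposal follows essentially the same route as the paper's sketch: decompose $R^nf$ into the critical branch composed with a long diffeomorphic part, use the smallest interval argument to produce Koebe space, and apply the real Koebe Principle (via cross-ratio distortion controlled by disjointness of the cycle, as needed absent a negative-Schwarzian hypothesis) to bound the nonlinearity of the diffeomorphic part uniformly in $n$. The extra detail you supply on the cross-ratio inequality and bounded type is consistent with the paper's surrounding discussion, so there is nothing genuinely different to compare.
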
 

The first part of this theorem shows that $R^nf$ is a composition of a quadratic map
and a map $g$ whose non-linearity is bounded from above. The main 
ingredient he used for this is the (real) Koebe Principle  and the {\em smallest interval argument} discussed above. 
Indeed, let $J$ be the first renormalisation interval and that $f^p(J)\subset J$ with $p\ge 1$ minimal.  
Then the intervals $J,\dots,f^{p-1}(J)$ are disjoint and one among them, let us say $f^i(J)$, is the smallest. This means that, unless
$f^i(J)$ is one of the two extreme intervals in this collection,  
the interval $f^i(J)$ is contained in an interval $T=[f^l(J),f^r(J)]$ which has the property 
that both components of $T\setminus f^i(J)$ are not small compared to $f^i(J)$. 
This simple idea can be used to obtain {\em Koebe space}, namely that the map 
$f^{p-1}\colon f(J)\to f^p(J)\subset J$ extends to a diffeomorphism with range 
$T'\supset J$ so that $J$ is well-inside $J$. Using the real Koebe Principle 
the map $f^{p-1}\colon f(J)\to f^p(J)$ then has bounded non-linearity.

Extending this argument, and using the pullback argument from above, 
one then obtains one of the key steps: 

\begin{theorem}[Quasisymmetric Rigidity in the Renormalisable Case] 
Let $f,g$ be two infinitely renormalisable real-analytic maps of bounded type
and with quadratic critical points.  Then if $f,g$ 
are topologically conjugate they are in fact quasisymmetrically conjugate. 
\end{theorem}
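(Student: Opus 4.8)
The plan is to upgrade the topological conjugacy between $f|_\R$ and $g|_\R$ to a quasiconformal conjugacy of the whole sphere $\overline{\C}$ which preserves $\R$, and then to observe that any quasiconformal self-map of $\overline{\C}$ preserving $\R$ restricts on $\R$ to a quasisymmetric homeomorphism. The two engines are the a-priori bounds (first real, then complex) and the pullback argument recorded above. To start, invoke the Real Bounds theorem: since $f$ and $g$ are infinitely renormalisable of bounded type $\le p$, the renormalisations $\mathcal{R}^n f$ and $\mathcal{R}^n g$ all have uniformly bounded $C^2$ norm, and, as in the smallest-interval/Koebe argument sketched above, the long diffeomorphic branch $f^{p_n-1}\colon f(J_n)\to f^{p_n}(J_n)$ occurring in the $n$-th renormalisation has nonlinearity bounded independently of $n$. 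Thus, up to affine rescaling, $\mathcal{R}^n f$ is a composition of $x\mapsto x^2$ with a diffeomorphism of bounded distortion, and the combinatorics $p_1,\dots,p_n$ are drawn from a fixed finite alphabet; likewise for $g$.

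Next, and this is the analytic heart, promote the real bounds to complex bounds. Using the Poincaré-metric/Schwarz-inclusion mechanism (the Poincaré discs $D_r(J)$ on $(\C\setminus\R)\cup J$ contract under pullback by real branches whose critical values lie in $\R\setminus J$), one shows that $\mathcal{R}^n f$ extends to a quadratic-like map $f^{p_n}\colon U_n\to V_n$ with $\operatorname{mod}(V_n\setminus U_n)\ge\mu_0>0$ and with $U_n,V_n$ of comparable diameter, all uniformly in $n$; the same for $g$. I expect this to be the main obstacle: one must control the \emph{complex} pullbacks of the renormalisation neighbourhoods through the long branch, and the real bounds from the previous paragraph are what make this possible. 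Everything after this is comparatively soft.

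With complex bounds in hand, build the conjugacy tower by tower. Since $f$ and $g$ are topologically conjugate and infinitely renormalisable of bounded type, the combinatorics of their two renormalisation towers coincide. Between levels $n$ and $n+1$ the complex dynamics of $f$ (resp.\ $g$) is, thanks to the definite moduli and bounded geometry, a uniformly ``fat'' configuration built from the finitely many branch types; on each such fundamental region one constructs a $K_0$-quasiconformal map conjugating $f$ to $g$ with $K_0$ independent of $n$ --- for instance by straightening both $\mathcal{R}^n f$ and $\mathcal{R}^n g$ to quadratic polynomials via the Straightening Theorem with uniformly bounded dilatation, and interpolating across the annulus $V_n\setminus U_n$ of definite modulus. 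Gluing these maps along the dynamics yields a $K_0$-quasiconformal homeomorphism $\Phi$ of a punctured neighbourhood of the postcritical Cantor set $\Lambda_f$ that conjugates $f$ to $g$; because the moduli are definite, $\Lambda_f$ has bounded geometry, so $\Phi$ extends continuously, hence quasiconformally, across $\Lambda_f$, where it realises the combinatorial matching between $\Lambda_f$ and $\Lambda_g$.

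Finally, spread $\Phi$ over the whole sphere. Take $h_0$ to be a quasiconformal extension of $\Phi$ to $\overline{\C}$ that is the identity near $\infty$; since $h_0(P(f))=P(g)$ and $h_0$ respects the branching of $f$ and $g$, the one-step lift $h_1$ with $g\circ h_1=h_0\circ f$ exists, agrees with $h_0$ near $P(f)$, and is homotopic to $h_0$ rel $P(f)$. The pullback argument stated above then produces a global quasiconformal homeomorphism $H$ of $\overline{\C}$ with $g\circ H=H\circ f$ --- note that this conclusion only requires the conjugacy to be quasiconformal, so no hypothesis on invariant line fields is needed here. Because $f$, $g$ and the entire construction can be made symmetric under complex conjugation, $H$ preserves $\R$, and therefore $H|_\R$ is a quasisymmetric homeomorphism of $\R$ conjugating $f$ to $g$. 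This is exactly the asserted quasisymmetric rigidity (and, by uniqueness of the topological conjugacy for such maps, $H|_\R$ coincides with the given conjugacy).
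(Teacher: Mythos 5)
Your overall strategy (real bounds, then complex bounds, then a quasiconformal conjugacy built level by level, then restriction to $\R$) is close in spirit to later treatments, but the globalization step at the end fails as written, and it is exactly the step where the paper's own argument does the work. The theorem is about real-analytic maps $f,g$ of an interval: they are not rational maps, so there is no iteration of $f$ or $g$ on $\overline{\C}$, no behaviour ``near $\infty$'', and no global homotopy-lifting of $h_0$ through $f$ and $g$. The pullback argument quoted in the paper is stated for the quadratic family (and extends to polynomial-like maps); to invoke it you would need a quadratic-like extension of $f$ itself, but the complex-bounds theorem only provides quadratic-like extensions of $R^n f$ for $n$ \emph{sufficiently large}. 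For the same reason your ``tower by tower'' gluing cannot start at level $0$: on the first finitely many levels there is no uniform complex extension to work with. So what your construction can legitimately deliver is a quasiconformal conjugacy between deep renormalisations (equivalently, near the postcritical Cantor set), and the passage from that to a quasisymmetric conjugacy between $f$ and $g$ on their whole dynamical interval is missing; taking $h_0$ to be ``a quasiconformal extension of $\Phi$ to $\overline{\C}$, identity near $\infty$'' and citing the sphere-level pullback argument does not supply it.

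By contrast, the route indicated in the paper does not use complex bounds for this theorem at all: the real a-priori bounds (smallest-interval argument plus the real Koebe Principle) give bounded geometry of the renormalisation cycles uniformly in the level, hence quasisymmetric control of the conjugacy on the postcritical set, and then a pullback argument adapted to the real setting (pulling back quasisymmetric/quasiconformal extensions through the real dynamics, with dilatation controlled by the bounded geometry) spreads this to a quasisymmetric conjugacy of the interval maps themselves; the complex bounds enter Sullivan's programme later, for compactness and convergence of renormalisation. If you want to keep your complex-analytic construction, you must either prove that $f$ and $g$ themselves admit polynomial-like restrictions (which the hypotheses do not give), or add the real pullback/spreading step that transfers your conjugacy from a deep renormalisation level back up to level $0$; as it stands that transfer is the genuine gap.
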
 

To obtain Theorem~\ref{thm:renormalisation} Dennis needed to extend 
the real maps $f^p\colon J\to J$ to quadratic-like maps and to obtain 
compactness with the space of such maps: 

\begin{theorem}[Complex Bounds] 
\label{thm:complexbounds} 
Let $f$ be an infinitely renormalisable real-analytic unimodal map of bounded type $p_i\le p$ for all $i\ge 0$ and with quadratic critical point.  Then, for every $n$ sufficiently large, $R^nf$ extends to a quadratic-like map
$F\colon U\to V$ so that  the modulus 
of $V\setminus U$ is bounded by some number $\rho>0$ which does not depend on $f$ but only the upper bound $p$.  
\end{theorem}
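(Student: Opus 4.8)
The plan is to promote the real a priori bounds to complex ones by means of the Poincar\'e-disc (complex cross-ratio) technique recalled just above the statement. Write $q_n = p_0p_1\cdots p_{n-1}$ for the period of the $n$-th renormalisation, so that, before rescaling, $R^nf$ is the restriction of $f^{q_n}$ to the renormalisation interval $J_n$ around the critical point $c$, the intervals $J_n,f(J_n),\dots,f^{q_n-1}(J_n)$ have pairwise disjoint interiors, and $c$ lies only in $J_n$. Factor $f^{q_n}=D\circ f$, where $f$ is the unimodal folding piece near $c$ and $D:=f^{q_n-1}\colon f(J_n)\to f^{q_n}(J_n)\subseteq J_n$ is a diffeomorphism, itself a composition of the single diffeomorphic branches $f\colon f^i(J_n)\to f^{i+1}(J_n)$, $1\le i\le q_n-1$. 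This is precisely the decomposition of $R^nf$ into a quadratic map composed with a map of bounded nonlinearity furnished by the Real Bounds theorem.

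First I would record the inputs from the real bounds that are needed, all with constants depending only on the combinatorial bound $p$ and valid for $n$ large: (i) there are $\delta>0$ and an interval $\widehat J_n\supset f^{q_n}(J_n)$ such that $D$ extends to a diffeomorphism onto $\widehat J_n$ with $\delta$-Koebe space, so $f^{q_n}(J_n)$ is $\delta$-well-inside $\widehat J_n$; (ii) this extended $D$ has uniformly bounded distortion, by the real Koebe principle; and (iii) by the bounded-type hypothesis together with (i)--(ii), along the orbit $f(J_n)\to f^2(J_n)\to\cdots\to\widehat J_n$ the critical value $v=f(c)$ and, more to the point, the successive pullbacks of the relevant Poincar\'e discs stay away from $v$ (the ``non-coiling'' phenomenon), so that each inverse branch of $f$ used below is univalent on the Poincar\'e disc to which it is applied.

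Then I would build the quadratic-like extension. Fix an opening angle $\theta\in(0,\pi)$ and set $V_n=D_\theta(\widehat J_n)$. Pulling $V_n$ back through $D$ branch by branch and applying the Schwarz inclusion lemma at each of the $q_n-1$ steps (legitimate by (iii)), the component $W_n$ of $D^{-1}(V_n)$ containing $f(J_n)$ satisfies $W_n\subseteq D_\theta(T_n)$, where $T_n:=D^{-1}(\widehat J_n)\supseteq f(J_n)$; by (ii), $f(J_n)$ is $\delta'$-well-inside $T_n$ for some $\delta'$ comparable to $\delta$. Next pull back once more by the folding map: let $U_n$ be the component of $f^{-1}(W_n)$ containing $c$, so $f\colon U_n\to W_n$ is a proper degree-two branched cover and hence $f^{q_n}=D\circ f\colon U_n\to V_n$ is quadratic-like. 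For the modulus estimate, note that by (i) the lens $D_\theta(f^{q_n}(J_n))$ is compactly contained in $V_n$ with $\mod\bigl(V_n\setminus\overline{D_\theta(f^{q_n}(J_n))}\bigr)\ge\mu(\delta,\theta)>0$; using the real bounds to locate $c$ well-inside $T_n$ one checks similarly that $\overline{U_n}\subseteq V_n$ with $\mod(V_n\setminus\overline{U_n})$ bounded below by a constant $\rho=\rho(\delta,\theta)>0$ (the extra folding pullback changes such moduli by a factor of at most two). Finally, conjugating by the affine rescaling, which preserves all moduli, transfers the bound to $R^nf$ itself.

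The main obstacle is keeping the opening angle $\theta$ -- and hence the final constant $\rho$ -- under control uniformly in $f$ and in $n$: a single application of the Schwarz inclusion lemma only says the pullback is \emph{contained} in a lens of the same angle, so one must combine it with the real bounds to prevent the pulled-back regions from degenerating (shrinking onto the real line, i.e.\ $\theta\to\pi$, or swelling past the critical value) as one iterates the folding map and composes $q_n-1$ diffeomorphic branches. This is exactly where both the real a priori bounds and the assumption of bounded combinatorics are indispensable, and making this control explicit and $p$-uniform is the technical heart of the argument; it is also the reason the conclusion is only asserted for $n$ sufficiently large, since the real bounds themselves take hold only after finitely many renormalisations.
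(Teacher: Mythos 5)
Your strategy is the one the paper itself attributes to Sullivan: real a priori bounds coming from the smallest-interval/Koebe-space argument, the decomposition $f^{q_n}=D\circ f$ into a long diffeomorphic piece and one quadratic fold, and pullbacks of Poincar\'e discs $D_\theta(J)$ of the slit plane $(\C\setminus\R)\cup J$ controlled by the Schwarz inclusion lemma. But as written there is a genuine gap exactly at the step you defer as the ``technical heart''. The Schwarz inclusion statement you invoke -- that an inverse branch of $f$ maps $D_\theta(J)$ into $D_\theta(J')$ with the \emph{same} angle -- is valid only when that inverse branch is univalent on the whole slit plane $(\C\setminus\R)\cup J$, i.e.\ for real polynomials with real critical values or for maps of the Epstein class. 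A general real-analytic unimodal map is defined and univalent only on a bounded complex neighbourhood of the interval, so each of the $q_n-1$ pullback steps loses a little angle no matter how far the orbit stays from the critical value; your condition (iii) (univalence on the disc in question) does not address this. The loss at each step is of the order of the length of the corresponding interval $f^i(J_n)$, and the argument survives only because these intervals are pairwise disjoint and, by the real bounds, their total length tends to zero as $n\to\infty$. This accumulated-angle-loss estimate -- not the fact that the real bounds ``take hold after finitely many renormalisations'' -- is the true reason the conclusion is asserted only for $n$ large, and it is precisely what makes the theorem hold beyond the Epstein class.

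The second weak point is the final modulus estimate. Knowing that $D_\theta(f^{q_n}(J_n))\Subset V_n$ and that a degree-two pullback halves moduli does not yet give $\overline{U_n}\subset V_n$ with definite modulus: you need the base interval of $V_n$ to contain $J_n$ itself well inside (the real-bounds statement provides a range $T'\supset J_n$ with $J_n$ well inside $T'$, and $V_n$ should be the Poincar\'e disc over $T'$), and you need the set $U_n$ -- whose diameter is comparable to $|J_n|$ only up to a constant $C(\theta)$ coming from the square-root geometry at the quadratic critical point -- to be beaten by the Koebe space $\delta$; note also that your phrase ``locate $c$ well-inside $T_n$'' conflates the critical point with the critical value side, since $T_n$ is an interval around $v=f(c)$, not around $c$. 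Making $\theta$, $\delta$ and $C(\theta)$ compatible, uniformly in $f$ and $n$ and depending only on the combinatorial bound $p$, is not bookkeeping: it is the content of the proof (carried out in full in \cite[Chapter VI]{MR1239171}), and without it your text is a correct plan rather than a proof.
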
 

Moreover, any limit of the sequence $\{R^nf\}$ has a complex analytic extension 
which is in the so-called Epstein class.

Real bounds were proved in a much more general context, see (in increasing generality)
 \cite{MR1279474,MR1963798, MR2083467}. Complex bounds for real unicritical 
 maps were proved in \cite{MR1637647,MR1488251} and for multicritical maps in 
 (in increasing generality) in  \cite{MR1862823, MR3687213}.
Complex bounds do not hold for general (non-real) quadratic maps: there are examples
of infinitely renormalisable quadratic maps for which no modulus bounds as in the above theorem 
hold.  On the other hand, for non-renormalisable polynomials maps (with only hyperbolic periodic points) 
one does have complex bounds, see 
\cite{MR2533666} and 
\cite{MR4446272}.  An important ingredient in the latter developments
is the quasi-additive lemma by Kahn and Lyubich \cite{MR2480612}. 
This lemma was also used to treat some maps which are infinitely renormalisable, 
see \cite{MR2423310,MR2508259}.  
It is not known how to extend complex bounds to the case of general rational maps, 
as in general it is not clear how to construct an initial puzzle partition.

\subsection{Riemann surface laminations and the non-coiling lemma} 

To complete the proof of Theorem~\ref{thm:renormalisation}, Dennis introduced a new tool, namely 
his  {\em non-coiling principle} and his {\em almost geodesic principle}. To explain this, 
 consider a  qc conjugacy $H$ between 
$F_0$ and $F_1$. Its Beltrami coefficient $\mu_H=\bar \partial H/ \partial H$ its invariant 
under $F_0$. It follows that the family of qc maps $H_t$ associated to 
$\mu_t=t\mu_H$ (coming from the MRMT) defines a family of quadratic like maps $F_t=H_t\circ F_0\circ H_t^{-1}$
connecting $F_0$ to $F_1$. This is called a {\em Beltrami path} between $F_0$ and $F_1$.  
Dennis' {\em almost geodesic principle}  shows that  the Beltrami path corresponding to an {\em almost extremal vector} does not coil: if the tangent Beltrami vector is almost extremal then the Beltrami path remains almost a geodesic for a long (but a priori fixed) time.

To show  that the renormalisation operator is (weakly) contracting
he then argued as follows. From the complex bounds discussed in the previous subsection, he obtained that  there 
exists a compact space $\mathcal K$, so that if we take an arc connecting two conjugate maps which are infinitely renormalisable  and of bounded type, then after $n$ renormalisations this arc is mapped in $\mathcal K$.
Here $n$ depends on the choice of the chosen maps, but $\mathcal K$ does not.  
Now take an almost geodesic path between two maps $F_0,F_1$. Extend this path to a geodesic path 
between two maps $\tilde F_0,\tilde F_1$ which are extremely far apart. Now apply renormalisation. 
For $n$ sufficiently large, the renormalisations $R^n(\tilde F_0),R^n(\tilde F_1)$ are in $\mathcal K$ and 
so not far apart. 
But then, by the {\em almost geodesic principle},  the renormalisations $R^n(F_0),R^n(F_1)$ are extremely close. 

To make all this work, Dennis had to consider germs of quadratic-like maps.  
For this reason he considered inverse limits of the quadratic-like maps $F_i\colon U_i\to V_i$ which led to the study of \emph{Riemann surface laminations}. This is not the place to go into a full description of the beautiful theory of such objects, but let us at least say a word or two. Roughly speaking, a Riemann surface lamination (RSL) is a space akin to a foliated space in which the chart domains are homeomorphic to $D\times T$, where $D\subset \mathbb{C}$, is a disk and the transversal $T\subset \mathbb{R}$ is typically a Cantor set (or an interval), and the chart transitions are holomorphic along the horizontal leaves. In the present context, the main example is the following. Let $F\colon U\to V$ be a a quadratic-like map, let $K(F)\subseteq \mathbb{C}$ be its filled-in Julia set, which we assume to contain the critical point of $F$ (so that it is connected), set $W=V\setminus K(F)$, and consider the inverse system of holomorphic covering maps:
\[
\cdots \to F^{-n-1}W \to F^{-n}W\to \cdots \to F^{-1}W \to W\ .
\]
The inverse limit space $\mathcal{L}(F)$ of this system is a fibration over $W$, the fiber above each $x\in W$ being a Cantor set (the binary Cantor set at the end of the tree giving the full backward orbit of $x$). From this it follows that $\mathcal{L}(F)$ is an RSL in a natural way. 

The inverse limit map $F_\infty: \mathcal{L}(F) \to \mathcal{L}(F)$ is invertible and acts properly discontinuously on $\mathcal{L}(F)$, and the quotient $X_F=\mathcal{L}(F)/F_\infty$ is a \emph{compact} RSL. In \cite{MR1184622}, Dennis defined a deformation space or \emph{Teichm\"uller space} of $X_F$ (and more general RSLs) in such a way that every Beltrami path between two quadratic-like maps $F_0$ and $F_1$ as above can be lifted to a Beltrami path between the corresponding laminations $X_{F_0}$ and $X_{F_1}$, and all deformations are encoded in this fashion. What makes this possible is the fact that the Julia set of an infinitely renormalisable quadratic-like map with complex bounds does not carry any non-trivial quasi-conformal deformations. 
Dennis then proved the non-coiling principle and the almost-geodesic principle at the level of laminations, transporting the resulting contraction of the Teichm\"uller distance downstairs, at the level of maps.


We will not go further into Dennis proof of this tour de force (a full description can be found in \cite[Chapter VI]{MR1239171}, in addition to the papers \cite{MR1184622} and \cite{MR1215976}). 
After all, as mentioned,  this last step was improved in subsequent proofs which 
show that the invariant Cantor set of $R$ attracts other maps with an {\em exponential rate}, see McMullen \cite{MR1312365, MR1401347} and Avila and Lyubich \cite{MR2854860}.  
Still, the reader who wants to learn more about RSLs should consult the elegant survey written by Ghys in \cite{MR2017612}. 

\subsection{Renormalisation theory for circle maps} 


While working on the renormalisation problem for unimodal maps, Dennis was well aware that similar experimental discoveries to those made by Feigenbaum and Coullet-Tresser had been made by physicists concerning circle homeomorphisms having a single non-flat critical point (of power-law type). The topological classification of such maps had been accomplished by Yoccoz in \cite{MR741080}. 

Dennis also knew that, in the circle context, Lanford had formulated a renormalisation conjecture akin to the one for unimodal maps, using the language of commuting pairs. Dennis then suggested to EdF, as a thesis problem, to adapt his holomorphic ideas to the case of such critical circle maps. The key step was to find an analogue of quadratic-like maps in the context of critical circle maps.  This was accomplished in \cite{MR2688469} (see also \cite{MR1709428}) with the notion of \emph{holomorphic commuting pair} (inspired in part by a computer picture drawn by H. Epstein), alongside a proof of complex bounds (as well as a pull-back argument) for such objects,  assuming the rotation number of the underlying critical circle map to be of bounded type, and also that the circle maps belonged to a special class of maps known as \emph{Epstein class}. The necessary real bounds had already been established by Herman (unpublished manuscript, but see \cite{MR3839606}) and Swiatek \cite{MR968483}. The bounded type assumption was removed by Yampolsky \cite{MR1677153}, still assuming the Epstein property. The latter was finally removed by EdF and Welington de Melo in \cite{MR1711394}. 

For unicritical circle maps, the fact that, under suitable full-measure conditions on the rotation number, exponential convergence of renormalisations leads to $C^{1+\alpha}$ rigidity was established in \cite{MR1728375} (counterexamples to this ansatz for rotation numbers in a special zero-measure class were constructed in the same paper). The analogous conditional statement obtained replacing $C^{1+\alpha}$ by $C^1$ holds under no restriction on the rotation number (other than being irrational), as shown by \cite{MR2308853}. The exponential convergence of renormalisations for \emph{real-analytic} unicritical circle maps with bounded type rotation number was proved in \cite{MR1711394}. Using the concept of parabolic renormalisation, Yampolsky \cite{MR1985030} was able to remove the bounded type hypothesis, and in fact proved that the renormalisation operator attractor is globally hyperbolic in the analytic context. 
In the larger space of $C^3$  unicritical circle maps, exponential convergence towards the attractor was first proved by Guarino in his thesis under de Melo -- see  \cite{MR3646874} -- assuming rotation numbers of bounded type only. The bounded type hypothesis was later removed in \cite{MR3843373}, at the cost of assuming the maps to be $C^4$. 

In recent years, considerable work has been done to extend these rigidity, universality and renormalisation convergence results to multicritical circle maps -- see for instance \cite{MR4458800} and references therein.
An important step towards this goal is to first establish the \emph{quasisymmetric rigidity} of such maps -- this was accomplished in \cite{MR3812112} for multicritical circle maps with critical points having arbitrary (real) power-law criticalities. 
For this and much more about multicritical circle maps, see \cite{MR4472818}.

\subsection{The Fatou conjecture in the real setting} \label{sec:realfatou}

As mentioned, Fatou conjectured that each rational map can be  approximated by an Axiom A rational map of the same degree. This question is still wide open, even in the quadratic case. 
However,  in the setting of real maps the corresponding result has been answered completely:

\begin{theorem}[The Real Fatou Conjecture] \label{thm:realfatou}   Each real polynomial can  can be approximated by a real Axiom A polynomial of the same degree.
 \end{theorem}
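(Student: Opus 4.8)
The plan is to reduce the statement to a \emph{combinatorial (quasisymmetric) rigidity} theorem for real polynomials, and then to run the pullback argument together with the open--closed argument exactly as was done above for Theorem~\ref{thm:rigiditycritfinite}. First I would normalise: the real polynomials of a fixed degree $d$ whose critical points are all real and non-degenerate form an open, dense subfamily $\mathcal{P}_d$, any neutral cycle of such a map can be perturbed into an attracting one (a step towards hyperbolicity), and any degenerate real critical point splits into non-degenerate ones. It therefore suffices to show that inside the finite-dimensional real-analytic family $\mathcal{P}_d$ every non-Axiom-A map \emph{without neutral cycles} is accumulated by Axiom A maps, and the rigidity statement to aim for is: two such maps that are topologically conjugate on $\R$ are quasiconformally conjugate on $\overline{\C}$ (and affinely conjugate in the unicritical normalisation).

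To prove rigidity I would fix $f\in\mathcal{P}_d$ not Axiom A and with no neutral cycle, so that some critical point $c$ is not attracted to a sink, and split according to whether $f$ is infinitely renormalisable at $c$ or renormalisable there only finitely often -- after renormalising down, the second case becomes the non-renormalisable one. The common engine is a pair of a priori bounds. The \emph{real bounds} follow from the smallest interval argument together with the real Koebe Principle recalled above, since a real polynomial with only real critical points has enough negative-Schwarzian-type control on the diffeomorphic inverse branches that occur. These real bounds must then be promoted to \emph{complex bounds}: in the infinitely renormalisable case one obtains polynomial-like (more precisely, box-type) renormalisations whose moduli are bounded below, as in Theorem~\ref{thm:complexbounds} and its multicritical extensions; in the non-renormalisable case one builds a combinatorial \emph{enhanced nest} of puzzle pieces and extracts definite moduli from it, the crucial analytic input being the quasi-additivity and covering lemmas of Kahn and Lyubich.

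Given complex bounds, rigidity follows by two mechanisms. In the infinitely renormalisable case the filled Julia set of an infinitely renormalisable polynomial-like map with complex bounds carries no non-trivial quasiconformal deformation -- this is the statement proved above via Riemann surface laminations and the non-coiling and almost-geodesic principles, in the multicritical generality due to Smania -- so a topological conjugacy is automatically quasiconformal on neighbourhoods of the small Julia sets. In the non-renormalisable case the uniform moduli along the enhanced nest feed a quasiconformal criterion: a homeomorphism respecting the puzzle partitions of $f$ and $\tilde f$ has dilatation controlled by those moduli, hence is quasiconformal, and the dynamics is then used to spread this bounded-dilatation structure over the whole plane. Assembling these local quasiconformal conjugacies along the orbit of the post-critical set and invoking the pullback argument above -- with the post-critically supported quasisymmetric conjugacy as the seed $h_0$ -- yields a global quasiconformal conjugacy $h\colon\overline{\C}\to\overline{\C}$ with $f_{a'}\circ h=h\circ f_a$, which is the desired rigidity.

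Finally comes the open--closed parameter step. The Axiom A locus is open in $\mathcal{P}_d$; suppose it is not dense and choose $f$ in the interior of its complement, with no neutral cycle (a non-generic condition). Combining the rigidity just established with the Measurable Riemann Mapping Theorem, exactly as in the argument for Theorem~\ref{thm:rigiditycritfinite}, the topological conjugacy class of $f$ in $\mathcal{P}_d$ is contained in a proper real-analytic submanifold -- indeed a single point in the unicritical normalisation -- and so has empty interior. Hence along a real-analytic arc in $\mathcal{P}_d$ through $f$ the combinatorial type must change; taking a maximal interval on which the topological type is constant and analysing its endpoint by the open--closed argument (compare the ``choose $[a,a']$ maximal'' discussion above) produces, arbitrarily close to $f$, parameters carrying an attracting cycle, an open condition from which one can successively capture the remaining critical points and reach the Axiom A locus -- contradicting that $f$ lies in the interior of the complement of the closure of the Axiom A set. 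The hard part will be the rigidity theorem, and within it the construction of complex bounds in the non-renormalisable \emph{multicritical} case, i.e.\ the enhanced nest and its uniform moduli estimates, together with the infinitely renormalisable multicritical rigidity and the matching of the two regimes; granted rigidity, the parameter step is comparatively soft.
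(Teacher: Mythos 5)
Your proposal follows essentially the same route that the paper lays out for Theorem~\ref{thm:realfatou}: reduce density of hyperbolicity to quasisymmetric rigidity of real polynomials with real (quadratic) critical points, promote real bounds to complex bounds (Sullivan-type bounds in the infinitely renormalisable case, the enhanced nest and the Kahn--Lyubich quasi-additivity lemma in the non-renormalisable case), and then conclude via the pullback argument, the MRMT and the open--closed parameter argument as in the discussion of Theorem~\ref{thm:rigiditycritfinite}. Note that the paper itself offers no proof of this theorem, only this strategic outline with attributions to Lyubich, Graczyk--Swi\c{a}tek and Kozlovski--Shen--van Strien, so your sketch is at the same level of detail and in the same spirit as the paper's account.
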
 

In the setting of real quadratic polynomials, the real Fatou conjecture was proved independently by Lyubich \cite{MR1459261}
and Graczyk \& Swi\'atek \cite{MR1469316,MR1657075}. 
The setting of real polynomials of higher degree $d>2$ was solved (using entirely different tools) by Kozloski, Shen and van Strien, 
see \cite{MR2335796} and \cite{MR2342693}. In the non-real non-renormalisable 
case see also \cite{MR2533666} and 
\cite{MR4446272}.  An important ingredient in the latter developments
is the quasi-additive lemma by Kahn and Lyubich \cite{MR2480612}. 
This lemma was also used to treat some maps which are infinitely renormalisable, 
see \cite{MR2423310,MR2508259}.

Although the proofs of these results are not due to Dennis, he played an important role 
in them. Indeed, his work suggested that to prove density of hyperbolicity 
that it would be enough  to prove the following

\begin{theorem}[Quasisymmetric Rigidity in one-dimensions]
If $f,g$ are topologically conjugate real polynomials with only real critical points, 
and all their critical points are quadratic, then these maps are quasisymmetrically conjugate. 
\end{theorem}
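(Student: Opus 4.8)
The plan is to manufacture a quasiconformal homeomorphism of a neighbourhood of $\R$ in $\bar\C$ that conjugates $f$ to $g$; its restriction to $\R$ is then automatically quasisymmetric. The engine is the pullback argument quoted above, so the substantive work is to construct an initial homeomorphism $h_0$ that realises the topological conjugacy on the postcritical set $P(f)$ in a quasiconformally controlled way, is conformal near $\infty$ and near any periodic attractors, and lifts, relative to $P(f)$, to a map $h_1$ conjugating the critical-value dynamics.

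I would begin with preliminaries. Since $f$ and $g$ are real polynomials with only real critical points they have negative Schwarzian derivative, and by the no-wandering-interval theorem above the topological conjugacy collapses no interval; by Ma\~n\'e's theorem the dynamics away from a neighbourhood of the critical set is expanding on compact invariant hyperbolic sets. Critical points lying in the immediate basin of a periodic attractor, and the behaviour near $\infty$, are dealt with by B\"ottcher/linearisation coordinates, which already give conformal conjugacies there; so the difficulty is localised at the ``active'' critical points whose orbits stay in a bounded recurrent region. Around such a point I would use the real analogue of the Yoccoz puzzle: \emph{nice intervals} $I$ --- those with $f^n(\partial I)\cap\mathrm{int}(I)=\emptyset$ for every $n\ge 1$ --- together with their first-return maps.

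The heart of the argument is combinatorial rigidity for the non-renormalisable part. Around each active critical point I would construct an \emph{enhanced nest} $\cdots\subset I_{k+1}\subset I_k\subset\cdots$ of nice intervals, designed so that the return maps to the levels have \emph{uniformly bounded combinatorial degree}. The Koebe principle then forces bounded geometry between consecutive levels (real bounds), and the Poincar\'e metric on $(\C\setminus\R)\cup I_k$ together with the Schwarz inclusion lemma --- exactly the mechanism described above for renormalisable maps --- upgrades this to quadratic-like extensions of the return maps with moduli bounded below uniformly in $k$ (complex bounds). Since $f$ and $g$ are topologically conjugate their nests are combinatorially identified; feeding the definite moduli into a quasiconformal gluing argument (for which the Kahn--Lyubich quasi-additivity lemma is the natural tool, though the symmetry of the real setting permits more hands-on estimates) yields a quasiconformal $h_0$, defined near $\R$, that conjugates $f$ to $g$ at a deep level of the nest, matches the combinatorial identification on $P(f)$, and is conformal near $\infty$ and near the attractors. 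If $f$ is (finitely or infinitely) renormalisable the same construction applies to the first-return maps of the renormalisation intervals: the quasisymmetric rigidity in the renormalisable case quoted above disposes of bounded-type renormalisations, while the complex bounds prevent the geometry of the critical Cantor set from degenerating in the unbounded-type case, so one reduces to, and reassembles from, the non-renormalisable pieces.

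Finally I would invoke the pullback argument: $h_0$ and $h_1$ satisfy its hypotheses --- conjugacy on $P(f)$; $g\circ h_1 = h_0\circ f$ with $h_1=h_0$ on $P(f)$ and homotopic to $h_0$ relative to $P(f)$; conformal near $\infty$ and near the attractors --- so the homotopy-lifting sequence $h_n$ has constant dilatation and, by compactness, converges to a quasiconformal $h$ with $g\circ h = h\circ f$; then $h|_{\R}$ is the desired quasisymmetric conjugacy. The step I expect to be by far the hardest is the combinatorial one: building an enhanced nest whose return maps have bounded degree when several critical points are present and their orbits are combinatorially linked, and then pushing the real bounds to complex bounds uniformly down every level of every nest. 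Once those estimates are in place, the renormalisation reduction and the pullback step are essentially formal.
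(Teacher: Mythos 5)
The paper itself does not prove this theorem: it is quoted as the key external input to the Real Fatou Conjecture, due to Lyubich \cite{MR1459261} and Graczyk--Swi\'atek \cite{MR1469316,MR1657075} in the quadratic case and to Kozlovski--Shen--van Strien \cite{MR2335796,MR2342693} in general, and the paper only indicates the shape of the general proof (the enhanced nest of puzzle pieces with Koebe space, plus the pullback argument). Measured against that, your outline follows exactly the Kozlovski--Shen--van Strien route, and you correctly avoid the Lyubich/Graczyk--Swi\'atek mechanism of growing moduli, which -- as the paper stresses -- is genuinely quadratic and fails once there are several (or degenerate) critical points. Your reduction of the problem to constructing a controlled $h_0$ on $P(f)$ and then running the pullback argument, and your localisation of the difficulty at the recurrent critical points via nice intervals, real bounds, and complex bounds for return maps of the enhanced nest, is the right architecture.

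That said, as a proof it is a programme rather than an argument: every step that carries the weight is named but not established. Concretely, (a) the assertion that the enhanced nest can be chosen so that the transition maps between consecutive levels have uniformly bounded degree is a theorem with a delicate combinatorial proof, not a design choice; (b) passing from real bounds to complex bounds for the associated box mappings, uniformly in the level and with several combinatorially linked critical points, is the analytic core of \cite{MR2335796}; and (c) the step you describe as ``feeding the definite moduli into a quasiconformal gluing argument'' is the QC-criterion/spreading principle, i.e.\ the mechanism by which puzzle-piece-level dilatation bounds are promoted to a single quasiconformal $h_0$ respecting the dynamics on $P(f)$ -- this is precisely what makes the conjugacy quasisymmetric and cannot be treated as routine. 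Finally, in the infinitely renormalisable case of unbounded type one does not simply ``reduce to non-renormalisable pieces'': one needs complex bounds valid for \emph{all} real combinatorics (Levin--van Strien, Lyubich--Yampolsky type results), since Sullivan's renormalisable-case rigidity quoted in the paper covers bounded type only. So the approach is the correct one and matches the proof the paper points to, but the decisive estimates (a)--(c) are exactly where the theorem lives, and your proposal defers all of them.
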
 

Dennis'  renormalisation theory for infinitely renormalisable unimodal (unicritical) maps of bounded type, 
relied on this result (in this setting).  In the quadratic case, 
the above theorem is due to  \cite{MR1459261} and Graczyk \& Swi\'atek \cite{MR1469316,MR1657075}. 
Their proof relied on the property that, in this setting, the moduli of certain annuli tends to infinity. 
This growth of moduli is a deep and subtle result, but this does not hold for unimodal maps 
with a degenerate critical point nor for mulimodal maps with non-degenerate critical points.  
So for the general case a different approach is needed. The approach by Kozlosvki, Shen and van Strien in  \cite{MR2335796} uses the enhanced nest, which is a particular choice of a sequence of puzzle pieces
that turn out to have Koebe space.  An introductory survey on this technique can be found 
in \cite{MR4446272}. The most general quasisymmetric result
is contained in joint work of Clark and van Strien, see \cite{https://doi.org/10.48550/arxiv.1805.09284}. 

\begin{table}[h!]
\begin{center}
\begin{tabular}{||c|c||} 
\hline 
{}&{}\\
{\bf{Kleinian Groups}} & {\bf{Iterated Analytic Maps}}\\
{}&{}\\
\hline 
{}&{}\\
{Kleinian group $\Gamma$} & {Holomorphic map $f$}\\
{}&{}\\
\hline 
{}&{}\\
{$\Gamma$ finitely generated} & {$f$ rational map}\\
{}&{}\\
\hline 
{}&{}\\
{$\Gamma$ is Fuchsian} & {$f$ is a Blaschke product}\\
{}&{}\\
\hline 
{}&{}\\
{Domain of discontinuity $\Omega(\Gamma)$} & {Fatou set $\mathcal{F}(f)$}\\
{}&{}\\
\hline 
{}&{}\\
{Limit set $\Lambda(\Gamma)$} & {Julia set $J(f)$}\\
{$\Lambda(\Gamma)\neq \void$}& {$J(f)\neq \void$}\\
{}&{}\\
\hline 
{}&{}\\
{$\Omega(\Gamma)$ has either $0,1,2$ or} & {$\mathcal{F}(f)$ has either $0,1,2$ or}\\
{infinitely many components}&{infinitely many components}\\
{}&{}\\
\hline 
{}&{}\\
{Either $\Lambda(\Gamma)=\widehat{\mathbb{C}}$ } & {Either $J(f)=\widehat{\mathbb{C}}$ }\\
{or $\Lambda(\Gamma)$ has empty interior}& {or $J(f)$ has empty interior} \\
{}&{}\\
\hline
{}&{}\\
{Ahlfors finiteness theorem} & {Sullivan's no-wandering-domains theorem}\\
{}&{}\\
\hline
{}&{}\\
{Bers area theorem} & {Shishikura's bound on the number}\\
{}&{of non-repelling periodic cycles}\\
{}&{}\\
\hline
{}&{}\\
{Mostow’s rigidity theorem} & {Thurston's uniqueness theorem on}\\
{}&{post-critically finite rational maps}\\
{}&{}\\
\hline 
{}&{}\\
{\;Patterson-Sullivan measures on $\Lambda(\Gamma)$\;} & {Sullivan's conformal measures on $J(f)$}\\
{}&{}\\
\hline
{}&{}\\
{The quotient manifold $\mathbb{H}/\Gamma$} & {Lyubich-Minsky lamination}\\
{}&{}\\
\hline
{}&{}\\
{Geometrically finite groups} & {Are hyperbolic rational}\\
{with no cusps are dense} & {maps dense?}\\
{}&{}\\
\hline
\end{tabular}
\end{center}
\caption{\label{tableone} Some entries in Sullivan's dictionary}
\end{table}

\subsection{Sullivan's quasisymmetry rigidity programme} 
Even though there is no analogue of the MRMT in the real setting, one
of the insights of Dennis was that quasisymmetric rigidity should still be a very powerful 
tool in addressing questions about the topological structure of conjugacy classes 
of interval maps. For example,  whether such a conjugacy class is a connected manifold. 
This insight turned out to be justified.  Indeed, let $\mathcal A^{\underline \nu}$ be the 
space of real analytic maps with precisely $\nu$ real critical points $c_1<\dots<c_\nu$ 
of order $\ell_1,\dots,\ell_\nu$.  The following theorem was shown by Clark and van Strien  
\cite{Clark-Strien}.
\begin{theorem} Let $f\in \mathcal A^{\underline \nu}$. Then the space $\mathcal T_f$ of real analytic maps 
in $\mathcal A^{\underline \nu}$ which are topologically conjugate to $f$ forms an analytic manifold. This manifold is 
connected and simply connected. 
\end{theorem}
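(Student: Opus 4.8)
The plan is to realise $\mathcal{T}_f$ as the surjective analytic image of a ball of admissible invariant Beltrami forms — a Bers-type deformation space of $f$ — and to read off both the manifold structure and the topology from that ball. The essential input is the general \emph{quasisymmetric rigidity} of Clark and van Strien for real-analytic maps with real critical points of arbitrary power-law criticalities (see \cite{https://doi.org/10.48550/arxiv.1805.09284}): every $g\in\mathcal{T}_f$ is quasisymmetrically conjugate to $f$ along the interval. Because $f$ and $g$ are real-analytic they extend holomorphically; using the complex box-mapping / Epstein-class extensions together with the real and complex bounds recalled above, and the absence of wandering intervals, one upgrades the quasisymmetric conjugacy on $\R$ to a quasiconformal conjugacy between holomorphic extensions $\hat f$ and $\hat g$ on a complex neighbourhood $\Omega$ of the interval. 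Its Beltrami coefficient is then a real-symmetric (so as to commute with complex conjugation and preserve $\R$), $\hat f$-invariant Beltrami form $\mu_g$ on $\Omega$ with $\|\mu_g\|_\infty<1$.

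Conversely, let $\mathrm{Bel}(f)$ be the open unit ball of the real Banach space of real-symmetric, $\hat f$-invariant Beltrami forms on $\Omega$ (supported off the locus where $\hat f$ is already conformally rigid). For $\mu\in\mathrm{Bel}(f)$ the Measurable Riemann Mapping Theorem yields a quasiconformal map $\phi_\mu$, normalised once and for all, such that $f_\mu:=\phi_\mu\circ\hat f\circ\phi_\mu^{-1}$ is again holomorphic; the pullback argument recalled above — adapted, as it is there for density of hyperbolicity with several critical points — shows that the trace of $f_\mu$ on $\R$ lies in $\mathcal{A}^{\underline{\nu}}$ and is quasiconformally, hence topologically, conjugate to $f$. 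Thus $\mu\mapsto f_\mu$ defines $\Phi\colon\mathrm{Bel}(f)\to\mathcal{T}_f$, analytic because $\phi_\mu$ depends analytically on $\mu$ (part of the MRMT recalled above) and surjective by the previous paragraph, so $\mathcal{T}_f=\Phi(\mathrm{Bel}(f))$.

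I would then analyse $\Phi$. One has $f_\mu=f_\nu$ precisely when $\phi_\nu\circ\phi_\mu^{-1}$ commutes with $\hat f$, i.e.\ when $\mu$ and $\nu$ differ by a trivial deformation; the Ahlfors--Bers splitting (the ``fundamental partition'') shows that the space of trivial directions is closed and complemented and that $\Phi$ is a split analytic submersion with local analytic sections. Composing these sections with $\Phi$ gives analytic charts with analytic transitions, so $\mathcal{T}_f$ is an analytic (in general infinite-dimensional) Banach manifold. For the topology: $\mathrm{Bel}(f)$ is a ball in a Banach space, hence contractible, and $\Phi$ is a continuous surjection, so $\mathcal{T}_f$ is connected — concretely, for $g\in\mathcal{T}_f$ the Beltrami path $t\mapsto\Phi(t\mu_g)$, $t\in[0,1]$, joins $f$ to $g$ inside $\mathcal{T}_f$. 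Simple connectedness follows from the stronger fact that the deformation space is \emph{contractible}: by the Teichm\"uller-theoretic machinery — precisely the kind of argument Dennis developed at the level of laminations in \cite{MR1184622} — the map $\Phi$ has the homotopy lifting property with contractible fibres, so $\mathcal{T}_f$, being the image of a contractible space under such a map, is itself contractible, and in particular simply connected.

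The main obstacle is the complexification in the first paragraph: there is no Measurable Riemann Mapping Theorem on the real line, so a quasisymmetric conjugacy has to be promoted to a genuine $\hat f$-invariant Beltrami form on a complex neighbourhood by controlling the holomorphic extensions of $f$ and $g$ at once — matching them on the Fatou part (basins of attracting cycles, where the extension is essentially forced but not canonical) and on the Julia part (where one must invoke the complex bounds) — and doing so uniformly enough that all the relevant deformations live on one fixed $\Omega$. The remaining delicate point is establishing, in this dynamical context, the Ahlfors--Bers split-submersion property and the contractibility of the fibres of $\Phi$, since this is what actually produces the analytic manifold structure and the simple connectedness rather than mere path-connectedness.
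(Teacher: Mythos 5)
Your overall strategy --- parametrise $\mathcal T_f$ by a ball of $\hat f$-invariant, real-symmetric Beltrami forms and read off the manifold structure, connectedness and contractibility from that ball --- is essentially the Avila--Lyubich--de Melo/Clark strategy for unimodal maps, and the paper is explicit that precisely this approach breaks down when there are several critical points. The step you flag as ``the main obstacle'' is not a technical loose end but the actual content of the theorem: for a general map in $\mathcal A^{\underline \nu}$ there is no polynomial-like (or otherwise canonical) complex extension on a fixed neighbourhood $\Omega$ of the interval on which a quasisymmetric conjugacy along $\R$ can be promoted to an $\hat f$-invariant Beltrami form. A real-analytic multimodal map only extends holomorphically to a small and non-dynamically-defined neighbourhood, orbits leave it, the complex bounds available in this generality produce polynomial-like restrictions only for first-return maps to puzzle pieces (and only under recurrence assumptions on each critical point), and the domains of the extensions of the various $g\in\mathcal T_f$ degenerate as $g$ moves far from $f$, so there is no single $\Omega$ carrying all the deformations. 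Without this step both the surjectivity of your map $\Phi$ and the very definition of $\mathrm{Bel}(f)$ are unsupported, and the subsequent Ahlfors--Bers splitting and fibre-contractibility claims (which in this infinite-dimensional, multicritical setting are not standard facts) have nothing to act on.

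This is exactly why Clark and van Strien introduce the \emph{pruned Julia set}: from $f$ and a choice of pruning one builds a real-analytic \emph{external map} of the circle \emph{with discontinuities}, and from that a \emph{pruned polynomial-like} complex extension of $f$. It is this dynamically defined complex object --- not an ad hoc neighbourhood $\Omega$ --- that serves as the arena where quasisymmetric rigidity, the pull-back argument and the Beltrami-path deformations can be combined, and it is at this level that the analytic manifold structure, connectedness and contractibility of $\mathcal T_f$ are established. So your proposal identifies the right ingredients (quasisymmetric rigidity, MRMT, pull-back, Beltrami paths) but is missing the central new idea that makes them applicable beyond the unimodal case.
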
 
 
 This theorem extends results of Avila-Lyubich-de Melo \cite{MR2018784} 
for the quasi-quadratic unimodal case and of Clark \cite{MR3255432} for the more general unimodal case. Their methods fail in the case where there are several critical points. For this reason, the notion of  {\em pruned Julia set} 
is introduced in \cite{Clark-Strien}. This set is a version of the Julia set (but pruned) but depends on where one
{\lq}prunes{\rq}. A pruned Julia set can be defined for 
each real analytic map $f$. The real analytic map $f$, together with its pruned 
Julia set, define a real analytic {\em external map} of the circle {\em with discontinuities}. 
Using this external map, one can construct a  {\em a pruned polynomial-like} complex extension
of the real analytic map. Finally, from all this one is able to show that topological conjugacy classes are connected (something  which was not even known in the general unimodal setting). Even more,  this space is contractible
and forms an analytic manifold.  

\section{Sullivan's dictionary}\label{sullivandict}

Dennis' wide-range view of Mathematics allows him to draw fruitful analogies between different theories, leading to several conjectures on either side. A case in point is what is now known as the \emph{Sullivan dictionary} between the theory of Kleinian groups (in dimension $n=3$) on one side and the theory of iterated holomorphic maps on the other side. A sample of entries in this dictionary is shown in Table \ref{tableone}. Note that the last entry in the table has a question mark: that is none other than the famous Fatou Conjecture, widely regarded as the main classical open problem about the dynamics of rational maps. 

Not all meaningful analogies, however, deserve to be in the dictionary.
For instance, a famous conjecture by Ahlfors in the 1960's stated that the limit set of a finitely generated Kleinian group is either the entire sphere or else has zero Lebesgue measure. As we mentioned in the beginning of section \ref{sec:kleiniangroups}, this is now a theorem, thanks to the combined efforts of several mathematicians. The final piece of the puzzle was laid down by Canary \cite{MR1166330}, building primarily on previous works by Thurston and Bonahon -- see for instance \cite{MR2355387} for a description of the whole story, and references therein. The corresponding statement for iterated holomorphic maps -- to wit, that the Julia set of a rational map is either the entire sphere or else has zero Lebesgue measure -- was thought for a long time to be true, until X.~Buff and A.~Chéritat \cite{MR2950763} found an example of a quadratic polynomial whose Julia set has positive measure. 

As Dennis himself has explained to us, he never put the Ahlfors conjecture in the dictionary because -- after working on the problem for about 13 months in the late seventies and exhausting all available ergodic arguments that would have solved it --  he came to the conclusion that no one would be able to prove it using what was currently known about finitely generated Kleinian groups. 
After proving his finiteness theorem, what Ahlfors really wanted to know was under which conditions the limit set of a Kleinian group could support non-trivial quasiconformal deformations. He asked the question about the Lebesgue measure of the limit set in the finitely generated case because, if the measure indeed turned out to be zero in that case, there would be no such deformations. Thus, Dennis realized that the real question was not the measure zero question, but rather to describe, if any, the quasiconformal deformations on the limit set. He was able to prove a very general result that states that, given a Kleinian group $\Gamma$ and any $\Gamma$-invariant subset $E\subset \Lambda_\Gamma$ of its limit set, there are quasiconformal deformations of $\Gamma$ supported in $E$ if and only if  there are \emph{positive measure wandering sets} inside $E$, and when this happens, the space of such nontrivial deformations is infinite dimensional. In particular, since for finitely generated groups the space of deformations is a-priori known to be finite-dimensional, that are no quasiconformal deformations supported on $\Lambda_\Gamma$ when $\Gamma$ is finitely generated. This holds even if the limit set happens to be the whole sphere. 
This absence of invariant line fields supported in the limit set is stated in Dennis'  Theorem \ref{noinvariantlinefields}. After more than 40 years, the corresponding  statement for rational maps{\footnote{To wit, that a rational map is either a Lattès example or else carries no invariant line fields in its Julia set.}} remains an open problem. And it is a fundamental problem: indeed it is possible to prove that if the statement for rational maps is true, then so is the Fatou conjecture. Thus, the question of absence of invariant line fields certainly deserves its place as an entry in the Sullivan dictionary (albeit being conspicuously absent from Table \ref{tableone}). 

Over the years, the Sullivan dictionary has continued to inspire new results. A recent example is provided by the work of Hee Oh. Working on the Kleinian side of the dictionary, in collaboration with Margulis and Mohammadi \cite{MR3261636}, she examined closed geodesics and holonomies for hyperbolic $3$-manifolds. She was then asked by Dennis himself about an analogue of her results for rational maps. This resulted in her paper \cite{MR3639596} in collaboration with Winter, in which they establish estimates on the number of primitive periodic orbits of a hyperbolic rational map. 

\section{Final words}

Dennis Sullivan's major contributions to the field of Dynamical Systems, some of which we attempted to describe here, constitute but one facet of his extraordinary work as a mathematician. There are several other facets. Thus, for his fundamental work in Topology, especially regarding the study of geometric and/or algebraic structures on manifolds, see the article by Shmuel Weinberger in the present volume. In more recent years, Dennis has essentially founded, in collaboration with M. Chas, the sub-field of Topology now known as String Topology. We have heard it said elsewhere that \emph{Dennis Sullivan is a mathematician who has re-invented himself several times\/}, and that seems to us a very accurate  statement. 

We have not included anything about Dennis' recent work on fluid dynamics. Nor have we mentioned any work  on dynamics that Dennis co-wrote with some of his students and/or post-docs, such as the work with Jiang and Morita
 \cite{MR1158758}, his work with Hu \cite{MR1440774}  or his work with Pinto \cite{MR2226493}, nor with many other collaborators from the dynamical systems community. 

In closing, it is important to add that Dennis has always been extremely generous when sharing his ideas with other researchers, as well as in guiding young mathematicians. According to the Math Genealogy Project, he has had so far 40 students, and a total of 155 descendants. Among his students, those who have written a thesis in dynamics under his supervision include Andre de Carvalho, Adam Epstein, Jun Hu, Yunping Jiang, Curt McMullen, Waldemar Paluba, Guiai Peng, Meiyu Su, as well as one of us (EdF). But many more mathematicians, young and old, although not formally his students, have been directly influenced by him. Through his insightful lectures, and his inquisitive quest not merely for results, but for \emph{understanding} Mathematics, Dennis has inspired and will continue to inspire us all.

\acknowledgement{We would like to thank Curt McMullen, Leon Staresinic, Edson Vargas for their useful comments, and especially Dennis Sullivan for explaining to us the origins of his dictionary.}

\bibliography{sullivan-bibl} 

\begin{thebibliography}{}

\bibitem[Ahlfors, 1981]{MR725161}
Ahlfors, L.~V. (1981).
\newblock {\em M\"{o}bius transformations in several dimensions}.
\newblock Ordway Professorship Lectures in Mathematics. University of
  Minnesota, School of Mathematics, Minneapolis, Minn.

\bibitem[Alves et~al., 2014]{MR3174743}
Alves, J.~F., Pinheiro, V., and Pinto, A.~A. (2014).
\newblock Explosion of smoothness for conjugacies between multimodal maps.
\newblock {\em J. Lond. Math. Soc. (2)}, 89(1):255--274.

\bibitem[Astorg et~al., 2016]{MR3505180}
Astorg, M., Buff, X., Dujardin, R., Peters, H., and Raissy, J. (2016).
\newblock A two-dimensional polynomial mapping with a wandering {F}atou
  component.
\newblock {\em Ann. of Math. (2)}, 184(1):263--313.

\bibitem[Avila et~al., 2020]{MR4094968}
Avila, A., Fayad, B., Le~Calvez, P., Xu, D., and Zhang, Z. (2020).
\newblock On mixing diffeomorphisms of the disc.
\newblock {\em Invent. Math.}, 220(3):673--714.

\bibitem[Avila and Lyubich, 2011]{MR2854860}
Avila, A. and Lyubich, M. (2011).
\newblock The full renormalization horseshoe for unimodal maps of higher
  degree: exponential contraction along hybrid classes.
\newblock {\em Publ. Math. Inst. Hautes \'{E}tudes Sci.}, (114):171--223.

\bibitem[Avila et~al., 2003]{MR2018784}
Avila, A., Lyubich, M., and de~Melo, W. (2003).
\newblock Regular or stochastic dynamics in real analytic families of unimodal
  maps.
\newblock {\em Invent. Math.}, 154(3):451--550.

\bibitem[Beardon, 1968]{MR227402}
Beardon, A.~F. (1968).
\newblock The exponent of convergence of {P}oincar\'{e} series.
\newblock {\em Proc. London Math. Soc. (3)}, 18:461--483.

\bibitem[Beardon, 1983]{MR698777}
Beardon, A.~F. (1983).
\newblock {\em The geometry of discrete groups}, volume~91 of {\em Graduate
  Texts in Mathematics}.
\newblock Springer-Verlag, New York.

\bibitem[Bowen, 1979]{MR556580}
Bowen, R. (1979).
\newblock Hausdorff dimension of quasicircles.
\newblock {\em Inst. Hautes \'{E}tudes Sci. Publ. Math.}, (50):11--25.

\bibitem[Bowen and Franks, 1976]{MR431282}
Bowen, R. and Franks, J. (1976).
\newblock The periodic points of maps of the disk and the interval.
\newblock {\em Topology}, 15(4):337--342.

\bibitem[Bruin and van Strien, 2015]{MR3264762}
Bruin, H. and van Strien, S. (2015).
\newblock Monotonicity of entropy for real multimodal maps.
\newblock {\em J. Amer. Math. Soc.}, 28(1):1--61.

\bibitem[Buff, 2018]{MR3831030}
Buff, X. (2018).
\newblock Wandering {F}atou component for polynomials.
\newblock {\em Ann. Fac. Sci. Toulouse Math. (6)}, 27(2):445--475.

\bibitem[Buff and Ch\'{e}ritat, 2012]{MR2950763}
Buff, X. and Ch\'{e}ritat, A. (2012).
\newblock Quadratic {J}ulia sets with positive area.
\newblock {\em Ann. of Math. (2)}, 176(2):673--746.

\bibitem[Canary, 1993]{MR1166330}
Canary, R.~D. (1993).
\newblock Ends of hyperbolic {$3$}-manifolds.
\newblock {\em J. Amer. Math. Soc.}, 6(1):1--35.

\bibitem[Cerveau et~al., 2003]{MR2017612}
Cerveau, D., Ghys, E., Sibony, N., and Yoccoz, J.-C. (2003).
\newblock {\em Complex dynamics and geometry}, volume~10 of {\em SMF/AMS Texts
  and Monographs}.
\newblock American Mathematical Society, Providence, RI; Soci\'{e}t\'{e}
  Math\'{e}matique de France, Paris.
\newblock With the collaboration of Marguerite Flexor, Papers from the Meeting
  ``State of the Art of the Research of the Soci\'{e}t\'{e} Math\'{e}matique de
  France'' held at the \'{E}cole Normale Sup\'{e}rieure de Lyon, Lyon, January
  1997, Translated from the French by Leslie Kay.

\bibitem[Clark, 2014]{MR3255432}
Clark, T. (2014).
\newblock Regular or stochastic dynamics in families of higher-degree unimodal
  maps.
\newblock {\em Ergodic Theory Dynam. Systems}, 34(5):1538--1566.

\bibitem[Clark et~al., 2022a]{clark_de_faria_van_strien_2022}
Clark, T., de~Faria, E., and Van~Strien, S. (2022a).
\newblock Asymptotically holomorphic methods for infinitely renormalizable
  $c^r$ unimodal maps.
\newblock {\em Ergodic Theory and Dynamical Systems}, page 1–49.

\bibitem[Clark et~al., 2022b]{MR4446272}
Clark, T., Drach, K., Kozlovski, O., and van Strien, S. (2022b).
\newblock The dynamics of complex box mappings.
\newblock {\em Arnold Math. J.}, 8(2):319--410.

\bibitem[Clark and van Strien, 2018]{https://doi.org/10.48550/arxiv.1805.09284}
Clark, T. and van Strien, S. (2018).
\newblock Quasisymmetric rigidity in one-dimensional dynamics.

\bibitem[Clark and van Strien, 2023]{Clark-Strien}
Clark, T. and van Strien, S. (2023).
\newblock Conjugacy classes of real analytic one-dimensional maps are analytic
  connected manifolds.

\bibitem[Clark et~al., 2017]{MR3687213}
Clark, T., van Strien, S., and Trejo, S. (2017).
\newblock Complex bounds for real maps.
\newblock {\em Comm. Math. Phys.}, 355(3):1001--1119.

\bibitem[De~Carvalho et~al., 2005]{MR2192529}
De~Carvalho, A., Lyubich, M., and Martens, M. (2005).
\newblock Renormalization in the {H}\'{e}non family. {I}. {U}niversality but
  non-rigidity.
\newblock {\em J. Stat. Phys.}, 121(5-6):611--669.

\bibitem[de~Faria, 1992]{MR2688469}
de~Faria, E. (1992).
\newblock {\em Proof of universality for critical circle mappings}.
\newblock ProQuest LLC, Ann Arbor, MI.
\newblock Thesis (Ph.D.)--City University of New York.

\bibitem[de~Faria, 1999]{MR1709428}
de~Faria, E. (1999).
\newblock Asymptotic rigidity of scaling ratios for critical circle mappings.
\newblock {\em Ergodic Theory Dynam. Systems}, 19(4):995--1035.

\bibitem[de~Faria and de~Melo, 1999]{MR1728375}
de~Faria, E. and de~Melo, W. (1999).
\newblock Rigidity of critical circle mappings. {I}.
\newblock {\em J. Eur. Math. Soc. (JEMS)}, 1(4):339--392.

\bibitem[de~Faria and de~Melo, 2000]{MR1711394}
de~Faria, E. and de~Melo, W. (2000).
\newblock Rigidity of critical circle mappings. {II}.
\newblock {\em J. Amer. Math. Soc.}, 13(2):343--370.

\bibitem[de~Faria et~al., 2006]{MR2259245}
de~Faria, E., de~Melo, W., and Pinto, A. (2006).
\newblock Global hyperbolicity of renormalization for {$C^r$} unimodal
  mappings.
\newblock {\em Ann. of Math. (2)}, 164(3):731--824.

\bibitem[de~Faria and Guarino, 2022]{MR4472818}
de~Faria, E. and Guarino, P. (2022).
\newblock {\em Dynamics of circle mappings}.
\newblock 33 $^{\rm o}$ Col\'{o}quio Brasileiro de Matem\'{a}tica. Instituto
  Nacional de Matem\'{a}tica Pura e Aplicada (IMPA), Rio de Janeiro.

\bibitem[de~Melo and van Strien, 1989]{MR997312}
de~Melo, W. and van Strien, S. (1989).
\newblock A structure theorem in one-dimensional dynamics.
\newblock {\em Ann. of Math. (2)}, 129(3):519--546.

\bibitem[de~Melo and van Strien, 1993]{MR1239171}
de~Melo, W. and van Strien, S. (1993).
\newblock {\em One-dimensional dynamics}, volume~25 of {\em Ergebnisse der
  Mathematik und ihrer Grenzgebiete (3) [Results in Mathematics and Related
  Areas (3)]}.
\newblock Springer-Verlag, Berlin.

\bibitem[Deroin et~al., 2007]{MR2358052}
Deroin, B., Kleptsyn, V., and Navas, A. (2007).
\newblock Sur la dynamique unidimensionnelle en r\'{e}gularit\'{e}
  interm\'{e}diaire.
\newblock {\em Acta Math.}, 199(2):199--262.

\bibitem[Douady and Hubbard, 1982]{MR651802}
Douady, A. and Hubbard, J.~H. (1982).
\newblock It\'{e}ration des polyn\^{o}mes quadratiques complexes.
\newblock {\em C. R. Acad. Sci. Paris S\'{e}r. I Math.}, 294(3):123--126.

\bibitem[Douady and Hubbard, 1984]{MR762431}
Douady, A. and Hubbard, J.~H. (1984).
\newblock {\em \'{E}tude dynamique des polyn\^{o}mes complexes. {P}artie {I}},
  volume~84 of {\em Publications Math\'{e}matiques d'Orsay [Mathematical
  Publications of Orsay]}.
\newblock Universit\'{e} de Paris-Sud, D\'{e}partement de Math\'{e}matiques,
  Orsay.

\bibitem[Douady and Hubbard, 1985a]{MR812271}
Douady, A. and Hubbard, J.~H. (1985a).
\newblock {\em \'{E}tude dynamique des polyn\^{o}mes complexes. {P}artie {II}},
  volume~85 of {\em Publications Math\'{e}matiques d'Orsay [Mathematical
  Publications of Orsay]}.
\newblock Universit\'{e} de Paris-Sud, D\'{e}partement de Math\'{e}matiques,
  Orsay.
\newblock With the collaboration of P. Lavaurs, Tan Lei and P. Sentenac.

\bibitem[Douady and Hubbard, 1985b]{MR816367}
Douady, A. and Hubbard, J.~H. (1985b).
\newblock On the dynamics of polynomial-like mappings.
\newblock {\em Ann. Sci. \'{E}cole Norm. Sup. (4)}, 18(2):287--343.

\bibitem[Douady and Hubbard, 1993]{MR1251582}
Douady, A. and Hubbard, J.~H. (1993).
\newblock A proof of {T}hurston's topological characterization of rational
  functions.
\newblock {\em Acta Math.}, 171(2):263--297.

\bibitem[Er\"{e}menko and Lyubich, 1992]{MR1196102}
Er\"{e}menko, A.~E. and Lyubich, M.~Y. (1992).
\newblock Dynamical properties of some classes of entire functions.
\newblock {\em Ann. Inst. Fourier (Grenoble)}, 42(4):989--1020.

\bibitem[Estevez and de~Faria, 2018]{MR3812112}
Estevez, G. and de~Faria, E. (2018).
\newblock Real bounds and quasisymmetric rigidity of multicritical circle maps.
\newblock {\em Trans. Amer. Math. Soc.}, 370(8):5583--5616.

\bibitem[Estevez et~al., 2022]{MR4458800}
Estevez, G., Smania, D., and Yampolsky, M. (2022).
\newblock Renormalization of analytic multicritical circle maps with bounded
  type rotation numbers.
\newblock {\em Bull. Braz. Math. Soc. (N.S.)}, 53(3):1053--1071.

\bibitem[Franks and Young, 1981]{MR654885}
Franks, J. and Young, L.~S. (1981).
\newblock A {$C^{2}$} {K}upka-{S}male diffeomorphism of the disk with no
  sources or sinks.
\newblock In {\em Dynamical systems and turbulence, {W}arwick 1980 ({C}oventry,
  1979/1980)}, volume 898 of {\em Lecture Notes in Math.}, pages 90--98.
  Springer, Berlin-New York.

\bibitem[Gambaudo et~al., 1994]{MR1259516}
Gambaudo, J.-M., Sullivan, D., and Tresser, C. (1994).
\newblock Infinite cascades of braids and smooth dynamical systems.
\newblock {\em Topology}, 33(1):85--94.

\bibitem[Gambaudo et~al., 1989]{MR994094}
Gambaudo, J.-M., van Strien, S., and Tresser, C. (1989).
\newblock H\'{e}non-like maps with strange attractors: there exist {$C^\infty$}
  {K}upka-{S}male diffeomorphisms on {$S^2$} with neither sinks nor sources.
\newblock {\em Nonlinearity}, 2(2):287--304.

\bibitem[Goldberg and Keen, 1986]{MR857196}
Goldberg, L.~R. and Keen, L. (1986).
\newblock A finiteness theorem for a dynamical class of entire functions.
\newblock {\em Ergodic Theory Dynam. Systems}, 6(2):183--192.

\bibitem[Graczyk and \'{S}wi\c{a}tek, 1997]{MR1469316}
Graczyk, J. and \'{S}wi\c{a}tek, G. (1997).
\newblock Generic hyperbolicity in the logistic family.
\newblock {\em Ann. of Math. (2)}, 146(1):1--52.

\bibitem[Graczyk and \'{S}wi\c{a}tek, 1998]{MR1657075}
Graczyk, J. and \'{S}wi\c{a}tek, G. (1998).
\newblock {\em The real {F}atou conjecture}, volume 144 of {\em Annals of
  Mathematics Studies}.
\newblock Princeton University Press, Princeton, NJ.

\bibitem[Guarino and de~Melo, 2017]{MR3646874}
Guarino, P. and de~Melo, W. (2017).
\newblock Rigidity of smooth critical circle maps.
\newblock {\em J. Eur. Math. Soc. (JEMS)}, 19(6):1729--1783.

\bibitem[Guarino et~al., 2018]{MR3843373}
Guarino, P., Martens, M., and de~Melo, W. (2018).
\newblock Rigidity of critical circle maps.
\newblock {\em Duke Math. J.}, 167(11):2125--2188.

\bibitem[Herman, 1979]{MR538680}
Herman, M.-R. (1979).
\newblock Sur la conjugaison diff\'{e}rentiable des diff\'{e}omorphismes du
  cercle \`a des rotations.
\newblock {\em Inst. Hautes \'{E}tudes Sci. Publ. Math.}, (49):5--233.

\bibitem[Herman, 2018]{MR3839606}
Herman, M.~R. (2018).
\newblock {\em Notes inachev\'{e}es---s\'{e}lectionn\'{e}es par
  {J}ean-{C}hristophe {Y}occoz}, volume~16 of {\em Documents Math\'{e}matiques
  (Paris) [Mathematical Documents (Paris)]}.
\newblock Soci\'{e}t\'{e} Math\'{e}matique de France, Paris.

\bibitem[Hu and Sullivan, 1997]{MR1440774}
Hu, J. and Sullivan, D.~P. (1997).
\newblock Topological conjugacy of circle diffeomorphisms.
\newblock {\em Ergodic Theory Dynam. Systems}, 17(1):173--186.

\bibitem[Jiang et~al., 1992]{MR1158758}
Jiang, Y.~P., Morita, T., and Sullivan, D. (1992).
\newblock Expanding direction of the period doubling operator.
\newblock {\em Comm. Math. Phys.}, 144(3):509--520.

\bibitem[Kahn and Lyubich, 2008]{MR2423310}
Kahn, J. and Lyubich, M. (2008).
\newblock A priori bounds for some infinitely renormalizable quadratics. {II}.
  {D}ecorations.
\newblock {\em Ann. Sci. \'{E}c. Norm. Sup\'{e}r. (4)}, 41(1):57--84.

\bibitem[Kahn and Lyubich, 2009a]{MR2508259}
Kahn, J. and Lyubich, M. (2009a).
\newblock A priori bounds for some infinitely renormalizable quadratics. {III}.
  {M}olecules.
\newblock In {\em Complex dynamics}, pages 229--254. A K Peters, Wellesley, MA.

\bibitem[Kahn and Lyubich, 2009b]{MR2480612}
Kahn, J. and Lyubich, M. (2009b).
\newblock The quasi-additivity law in conformal geometry.
\newblock {\em Ann. of Math. (2)}, 169(2):561--593.

\bibitem[Khanin and Teplinsky, 2007]{MR2308853}
Khanin, K. and Teplinsky, A. (2007).
\newblock Robust rigidity for circle diffeomorphisms with singularities.
\newblock {\em Invent. Math.}, 169(1):193--218.

\bibitem[Kiriki et~al., 2017]{MR3685668}
Kiriki, S., Nakano, Y., and Soma, T. (2017).
\newblock Non-trivial wandering domains for heterodimensional cycles.
\newblock {\em Nonlinearity}, 30(8):3255--3270.

\bibitem[Kozlovski, 2019]{MR3999686}
Kozlovski, O. (2019).
\newblock On the structure of isentropes of real polynomials.
\newblock {\em J. Lond. Math. Soc. (2)}, 100(1):159--182.

\bibitem[Kozlovski et~al., 2007a]{MR2342693}
Kozlovski, O., Shen, W., and van Strien, S. (2007a).
\newblock Density of hyperbolicity in dimension one.
\newblock {\em Ann. of Math. (2)}, 166(1):145--182.

\bibitem[Kozlovski et~al., 2007b]{MR2335796}
Kozlovski, O., Shen, W., and van Strien, S. (2007b).
\newblock Rigidity for real polynomials.
\newblock {\em Ann. of Math. (2)}, 165(3):749--841.

\bibitem[Kozlovski and van Strien, 2009]{MR2533666}
Kozlovski, O. and van Strien, S. (2009).
\newblock Local connectivity and quasi-conformal rigidity of non-renormalizable
  polynomials.
\newblock {\em Proc. Lond. Math. Soc. (3)}, 99(2):275--296.

\bibitem[Kozlovski and van Strien, 2020]{MR4152268}
Kozlovski, O. and van Strien, S. (2020).
\newblock Asymmetric unimodal maps with non-universal period-doubling scaling
  laws.
\newblock {\em Comm. Math. Phys.}, 379(1):103--143.

\bibitem[Levin et~al., 2020]{MR4115082}
Levin, G., Shen, W., and van Strien, S. (2020).
\newblock Positive transversality via transfer operators and holomorphic
  motions with applications to monotonicity for interval maps.
\newblock {\em Nonlinearity}, 33(8):3970--4012.

\bibitem[Levin and van Strien, 1998]{MR1637647}
Levin, G. and van Strien, S. (1998).
\newblock Local connectivity of the {J}ulia set of real polynomials.
\newblock {\em Ann. of Math. (2)}, 147(3):471--541.

\bibitem[Li and Shen, 2006]{MR2229794}
Li, S. and Shen, W. (2006).
\newblock Smooth conjugacy between {$S$}-unimodal maps.
\newblock {\em Nonlinearity}, 19(7):1629--1634.

\bibitem[Lyubich, 1997]{MR1459261}
Lyubich, M. (1997).
\newblock Dynamics of quadratic polynomials. {I}, {II}.
\newblock {\em Acta Math.}, 178(2):185--247, 247--297.

\bibitem[Lyubich, 1999]{MR1689333}
Lyubich, M. (1999).
\newblock Feigenbaum-{C}oullet-{T}resser universality and {M}ilnor's hairiness
  conjecture.
\newblock {\em Ann. of Math. (2)}, 149(2):319--420.

\bibitem[Lyubich and Martens, 2011]{MR2836053}
Lyubich, M. and Martens, M. (2011).
\newblock Renormalization in the {H}\'{e}non family, {II}: the heteroclinic
  web.
\newblock {\em Invent. Math.}, 186(1):115--189.

\bibitem[Lyubich and Yampolsky, 1997]{MR1488251}
Lyubich, M. and Yampolsky, M. (1997).
\newblock Dynamics of quadratic polynomials: complex bounds for real maps.
\newblock {\em Ann. Inst. Fourier (Grenoble)}, 47(4):1219--1255.

\bibitem[Lyubich, 1983]{MR718838}
Lyubich, M.~Y. (1983).
\newblock Some typical properties of the dynamics of rational mappings.
\newblock {\em Uspekhi Mat. Nauk}, 38(5(233)):197--198.

\bibitem[Ma\~{n}\'{e} et~al., 1983]{MR732343}
Ma\~{n}\'{e}, R., Sad, P., and Sullivan, D. (1983).
\newblock On the dynamics of rational maps.
\newblock {\em Ann. Sci. \'{E}cole Norm. Sup. (4)}, 16(2):193--217.

\bibitem[Marden, 1974]{MR349992}
Marden, A. (1974).
\newblock The geometry of finitely generated kleinian groups.
\newblock {\em Ann. of Math. (2)}, 99:383--462.

\bibitem[Marden, 2007]{MR2355387}
Marden, A. (2007).
\newblock {\em Outer circles}.
\newblock Cambridge University Press, Cambridge.
\newblock An introduction to hyperbolic 3-manifolds.

\bibitem[Margulis et~al., 2014]{MR3261636}
Margulis, G., Mohammadi, A., and Oh, H. (2014).
\newblock Closed geodesics and holonomies for {K}leinian manifolds.
\newblock {\em Geom. Funct. Anal.}, 24(5):1608--1636.

\bibitem[Martens, 1994]{MR1279474}
Martens, M. (1994).
\newblock Distortion results and invariant {C}antor sets of unimodal maps.
\newblock {\em Ergodic Theory Dynam. Systems}, 14(2):331--349.

\bibitem[Martens and de~Melo, 1999]{MR1677736}
Martens, M. and de~Melo, W. (1999).
\newblock The multipliers of periodic points in one-dimensional dynamics.
\newblock {\em Nonlinearity}, 12(2):217--227.

\bibitem[Martens et~al., 1992]{MR1161268}
Martens, M., de~Melo, W., and van Strien, S. (1992).
\newblock Julia-{F}atou-{S}ullivan theory for real one-dimensional dynamics.
\newblock {\em Acta Math.}, 168(3-4):273--318.

\bibitem[Maskit, 1988]{MR959135}
Maskit, B. (1988).
\newblock {\em Kleinian groups}, volume 287 of {\em Grundlehren der
  mathematischen Wissenschaften [Fundamental Principles of Mathematical
  Sciences]}.
\newblock Springer-Verlag, Berlin.

\bibitem[McMullen, 2020]{mcmullennotes}
McMullen, C. (2020).
\newblock {\em Riemann surfaces, dynamics and geometry}.
\newblock Harvard University.

\bibitem[McMullen, 1994]{MR1312365}
McMullen, C.~T. (1994).
\newblock {\em Complex dynamics and renormalization}, volume 135 of {\em Annals
  of Mathematics Studies}.
\newblock Princeton University Press, Princeton, NJ.

\bibitem[McMullen, 1996]{MR1401347}
McMullen, C.~T. (1996).
\newblock {\em Renormalization and 3-manifolds which fiber over the circle},
  volume 142 of {\em Annals of Mathematics Studies}.
\newblock Princeton University Press, Princeton, NJ.

\bibitem[McMullen and Sullivan, 1998]{MR1620850}
McMullen, C.~T. and Sullivan, D.~P. (1998).
\newblock Quasiconformal homeomorphisms and dynamics. {III}. {T}he
  {T}eichm\"{u}ller space of a holomorphic dynamical system.
\newblock {\em Adv. Math.}, 135(2):351--395.

\bibitem[Merenkov, 2019]{MR4015145}
Merenkov, S. (2019).
\newblock No round wandering domains for {$C^1$}-diffeomorphisms of tori.
\newblock {\em Ergodic Theory Dynam. Systems}, 39(11):3127--3135.

\bibitem[Milnor and Tresser, 2000]{MR1736945}
Milnor, J. and Tresser, C. (2000).
\newblock On entropy and monotonicity for real cubic maps.
\newblock {\em Comm. Math. Phys.}, 209(1):123--178.
\newblock With an appendix by Adrien Douady and Pierrette Sentenac.

\bibitem[Mostow, 1968]{MR236383}
Mostow, G.~D. (1968).
\newblock Quasi-conformal mappings in {$n$}-space and the rigidity of
  hyperbolic space forms.
\newblock {\em Inst. Hautes \'{E}tudes Sci. Publ. Math.}, (34):53--104.

\bibitem[Nicholls, 1989]{MR1041575}
Nicholls, P.~J. (1989).
\newblock {\em The ergodic theory of discrete groups}, volume 143 of {\em
  London Mathematical Society Lecture Note Series}.
\newblock Cambridge University Press, Cambridge.

\bibitem[Norton and Sullivan, 1996]{MR1375505}
Norton, A. and Sullivan, D. (1996).
\newblock Wandering domains and invariant conformal structures for mappings of
  the {$2$}-torus.
\newblock {\em Ann. Acad. Sci. Fenn. Math.}, 21(1):51--68.

\bibitem[Oh and Winter, 2017]{MR3639596}
Oh, H. and Winter, D. (2017).
\newblock Prime number theorems and holonomies for hyperbolic rational maps.
\newblock {\em Invent. Math.}, 208(2):401--440.

\bibitem[Patterson, 1976]{MR450547}
Patterson, S.~J. (1976).
\newblock The limit set of a {F}uchsian group.
\newblock {\em Acta Math.}, 136(3-4):241--273.

\bibitem[Pinto and Sullivan, 2006]{MR2226493}
Pinto, A.~A. and Sullivan, D. (2006).
\newblock The circle and the solenoid.
\newblock {\em Discrete Contin. Dyn. Syst.}, 16(2):463--504.

\bibitem[Prasad, 1973]{MR385005}
Prasad, G. (1973).
\newblock Strong rigidity of {${\bf Q}$}-rank {$1$} lattices.
\newblock {\em Invent. Math.}, 21:255--286.

\bibitem[Rempe-Gillen and van Strien, 2015]{MR3336841}
Rempe-Gillen, L. and van Strien, S. (2015).
\newblock Density of hyperbolicity for classes of real transcendental entire
  functions and circle maps.
\newblock {\em Duke Math. J.}, 164(6):1079--1137.

\bibitem[Ruelle and Sullivan, 1975]{MR415679}
Ruelle, D. and Sullivan, D. (1975).
\newblock Currents, flows and diffeomorphisms.
\newblock {\em Topology}, 14(4):319--327.

\bibitem[Shen, 2003]{MR1963798}
Shen, W. (2003).
\newblock Bounds for one-dimensional maps without inflection critical points.
\newblock {\em J. Math. Sci. Univ. Tokyo}, 10(1):41--88.

\bibitem[Shub and Sullivan, 1985]{MR796755}
Shub, M. and Sullivan, D. (1985).
\newblock Expanding endomorphisms of the circle revisited.
\newblock {\em Ergodic Theory Dynam. Systems}, 5(2):285--289.

\bibitem[Singer, 1978]{MR494306}
Singer, D. (1978).
\newblock Stable orbits and bifurcation of maps of the interval.
\newblock {\em SIAM J. Appl. Math.}, 35(2):260--267.

\bibitem[Smania, 2001]{MR1862823}
Smania, D. (2001).
\newblock Complex bounds for multimodal maps: bounded combinatorics.
\newblock {\em Nonlinearity}, 14(5):1311--1330.

\bibitem[Smania, 2020]{MR4045962}
Smania, D. (2020).
\newblock Solenoidal attractors with bounded combinatorics are shy.
\newblock {\em Ann. of Math. (2)}, 191(1):1--79.

\bibitem[Sullivan, 1976a]{MR501022}
Sullivan, D. (1976a).
\newblock A counterexample to the periodic orbit conjecture.
\newblock {\em Inst. Hautes \'{E}tudes Sci. Publ. Math.}, (46):5--14.

\bibitem[Sullivan, 1976b]{MR433464}
Sullivan, D. (1976b).
\newblock Cycles for the dynamical study of foliated manifolds and complex
  manifolds.
\newblock {\em Invent. Math.}, 36:225--255.

\bibitem[Sullivan, 1976c]{MR402824}
Sullivan, D. (1976c).
\newblock A new flow.
\newblock {\em Bull. Amer. Math. Soc.}, 82(2):331--332.

\bibitem[Sullivan, 1979]{MR556586}
Sullivan, D. (1979).
\newblock The density at infinity of a discrete group of hyperbolic motions.
\newblock {\em Inst. Hautes \'{E}tudes Sci. Publ. Math.}, (50):171--202.

\bibitem[Sullivan, 1981]{MR624833}
Sullivan, D. (1981).
\newblock On the ergodic theory at infinity of an arbitrary discrete group of
  hyperbolic motions.
\newblock In {\em Riemann surfaces and related topics: {P}roceedings of the
  1978 {S}tony {B}rook {C}onference ({S}tate {U}niv. {N}ew {Y}ork, {S}tony
  {B}rook, {N}.{Y}., 1978)}, volume~97 of {\em Ann. of Math. Stud.}, pages
  465--496. Princeton Univ. Press, Princeton, N.J.

\bibitem[Sullivan, 1982]{MR688349}
Sullivan, D. (1982).
\newblock Disjoint spheres, approximation by imaginary quadratic numbers, and
  the logarithm law for geodesics.
\newblock {\em Acta Math.}, 149(3-4):215--237.

\bibitem[Sullivan, 1983]{MR730296}
Sullivan, D. (1983).
\newblock Conformal dynamical systems.
\newblock In {\em Geometric dynamics ({R}io de {J}aneiro, 1981)}, volume 1007
  of {\em Lecture Notes in Math.}, pages 725--752. Springer, Berlin.

\bibitem[Sullivan, 1984]{MR766265}
Sullivan, D. (1984).
\newblock Entropy, {H}ausdorff measures old and new, and limit sets of
  geometrically finite {K}leinian groups.
\newblock {\em Acta Math.}, 153(3-4):259--277.

\bibitem[Sullivan, 1985]{MR819553}
Sullivan, D. (1985).
\newblock Quasiconformal homeomorphisms and dynamics. {I}. {S}olution of the
  {F}atou-{J}ulia problem on wandering domains.
\newblock {\em Ann. of Math. (2)}, 122(3):401--418.

\bibitem[Sullivan, 1988]{MR974329}
Sullivan, D. (1988).
\newblock Differentiable structures on fractal-like sets, determined by
  intrinsic scaling functions on dual {C}antor sets.
\newblock In {\em The mathematical heritage of {H}ermann {W}eyl ({D}urham,
  {NC}, 1987)}, volume~48 of {\em Proc. Sympos. Pure Math.}, pages 15--23.
  Amer. Math. Soc., Providence, RI.

\bibitem[Sullivan, 1991]{MR1209848}
Sullivan, D. (1991).
\newblock Renormalization, {Z}ygmund smoothness and the {E}pstein class.
\newblock In {\em Chaos, order, and patterns ({L}ake {C}omo, 1990)}, volume 280
  of {\em NATO Adv. Sci. Inst. Ser. B: Phys.}, pages 25--34. Plenum, New York.

\bibitem[Sullivan, 1992]{MR1184622}
Sullivan, D. (1992).
\newblock Bounds, quadratic differentials, and renormalization conjectures.
\newblock In {\em American {M}athematical {S}ociety centennial publications,
  {V}ol. {II} ({P}rovidence, {RI}, 1988)}, pages 417--466. Amer. Math. Soc.,
  Providence, RI.

\bibitem[Sullivan, 1993]{MR1215976}
Sullivan, D. (1993).
\newblock Linking the universalities of {M}ilnor-{T}hurston, {F}eigenbaum and
  {A}hlfors-{B}ers.
\newblock In {\em Topological methods in modern mathematics ({S}tony {B}rook,
  {NY}, 1991)}, pages 543--564. Publish or Perish, Houston, TX.

\bibitem[Sullivan and Thurston, 1986]{MR857674}
Sullivan, D.~P. and Thurston, W.~P. (1986).
\newblock Extending holomorphic motions.
\newblock {\em Acta Math.}, 157(3-4):243--257.

\bibitem[\'{S}wi\c{a}tek, 1988]{MR968483}
\'{S}wi\c{a}tek, G. (1988).
\newblock Rational rotation numbers for maps of the circle.
\newblock {\em Comm. Math. Phys.}, 119(1):109--128.

\bibitem[van Strien and Vargas, 2004]{MR2083467}
van Strien, S. and Vargas, E. (2004).
\newblock Real bounds, ergodicity and negative {S}chwarzian for multimodal
  maps.
\newblock {\em J. Amer. Math. Soc.}, 17(4):749--782.

\bibitem[van Strien, 1981]{MR654898}
van Strien, S.~J. (1981).
\newblock On the bifurcations creating horseshoes.
\newblock In {\em Dynamical systems and turbulence, {W}arwick 1980 ({C}oventry,
  1979/1980)}, volume 898 of {\em Lecture Notes in Math.}, pages 316--351.
  Springer, Berlin-New York.

\bibitem[Wang and Yang, 2021]{MR4345825}
Wang, J. and Yang, H. (2021).
\newblock A question of {N}orton-{S}ullivan in the analytic case.
\newblock {\em Int. Math. Res. Not. IMRN}, (22):17201--17219.

\bibitem[Yampolsky, 1999]{MR1677153}
Yampolsky, M. (1999).
\newblock Complex bounds for renormalization of critical circle maps.
\newblock {\em Ergodic Theory Dynam. Systems}, 19(1):227--257.

\bibitem[Yampolsky, 2002]{MR1985030}
Yampolsky, M. (2002).
\newblock Hyperbolicity of renormalization of critical circle maps.
\newblock {\em Publ. Math. Inst. Hautes \'{E}tudes Sci.}, (96):1--41 (2003).

\bibitem[Yoccoz, 1984]{MR741080}
Yoccoz, J.-C. (1984).
\newblock Il n'y a pas de contre-exemple de {D}enjoy analytique.
\newblock {\em C. R. Acad. Sci. Paris S\'{e}r. I Math.}, 298(7):141--144.

\end{thebibliography}
\bibliographystyle{apalike}

\end{document}